\documentclass{article}

\usepackage[affil-it]{authblk}
\usepackage{amsthm}
\usepackage{amsmath,amsfonts,amssymb}
\usepackage{stmaryrd}
\usepackage{url}
\usepackage[export]{adjustbox}
\usepackage[ruled,linesnumbered]{algorithm2e}

\setlength\oddsidemargin{0.2in}%
\setlength\evensidemargin{0.2in}%
\setlength\topmargin{-1pc}
\setlength\textwidth{6.0in}%
\setlength\textheight{560pt}

\newtheorem{thm}{Theorem}[section]
\newtheorem{prop}[thm]{Proposition}

\newtheorem{cor}[thm]{Corollary}
\newtheorem{lem}[thm]{Lemma}

\newtheorem{definition}[thm]{Definition}

\newcommand{\CL}[1]{\mathcal{#1}}
\newcommand{\RM}[1]{\mathrm{#1}}
\newcommand{\BF}[1]{\mathbf{#1}}
\newcommand{\BB}[1]{\mathbb{#1}}

\newcommand{\ten}[1]{\underline{\mathbf{#1}}}
\def\minimize{\mathop{\text{minimize}}}
\def\maximize{\mathop{\text{maximize}}}%

\title{Very Large-Scale Singular Value Decomposition
	Using Tensor Train Networks} 

\author[a]{Namgil Lee \thanks{namgil.lee@riken.jp}}
\author[a]{Andrzej Cichocki \thanks{cia@brain.riken.jp}}
\affil[a]{Laboratory for Advanced Brain Signal Processing, 
	RIKEN Brain Science Institute, Wako-shi, Saitama 3510198, Japan}

\date{}

\begin{document}
\maketitle

\begin{abstract}
We propose new algorithms for 
singular value decomposition (SVD) 
of very large-scale matrices
based on a low-rank tensor approximation technique
called the tensor train (TT) format. 
The proposed algorithms can compute several 
dominant singular values and corresponding singular 
vectors for large-scale structured matrices given
in a TT format. 
The computational complexity 
of the proposed methods scales logarithmically with 
the matrix size under the assumption that both the matrix 
and the singular vectors admit low-rank TT decompositions. 
The proposed methods, which are called the 
alternating least squares for SVD (ALS-SVD) and modified 
alternating least squares for SVD (MALS-SVD), 
compute the left and right singular vectors approximately 
through block TT decompositions. 
The very large-scale optimization problem is reduced to 
sequential small-scale optimization problems, 
and each core tensor of the block TT decompositions 
can be updated by applying any standard optimization methods. 
The optimal ranks of the block TT decompositions are determined adaptively 
during iteration process, so that we can achieve high approximation accuracy. 
Extensive numerical simulations are conducted for several types of 
TT-structured matrices such as Hilbert matrix, Toeplitz matrix,
 random matrix with prescribed singular 
values, and tridiagonal matrix.  
The simulation results demonstrate the effectiveness 
of the proposed methods compared with standard 
SVD algorithms and TT-based algorithms
developed for symmetric eigenvalue decomposition. 

\vspace{1pc}
{\raggedright KEY WORDS: 
curse-of-dimensionality, 
low-rank tensor approximation, 
matrix factorization, 
symmetric 
eigenvalue decomposition, 
singular value decomposition, 
tensor decomposition, 
tensor network, 
matrix product operator, 
Hankel matrix, 
Toeplitz matrix, 
tridiagonal matrix}

\end{abstract}



\section{Introduction}

The singular value decomposition (SVD) is one of the 
most important matrix factorization techniques in 
numerical analysis. The SVD can be used for 
the best low-rank approximation for matrices, 
computation of pseudo-inverses of matrices, 
solution of unconstrained linear least squares problems, 
principal component analysis, cannonical correlation analysis, 
and estimation of ranks and condition numbers of matrices, 
just to name a few. 
It has a wide range of applications in image processing, 
signal processing, immunology, molecular biology, 
information retrieval, systems biology, 
computational finance, and so on \cite{Wall2003}. 

In this paper, we propose two algorithms for computing $K$ 
dominant singular values and corresponding singular vectors 
of structured very large-scale matrices. 
The $K$ dominant singular values/vectors can be computed by 
solving the following trace maximization problem: 
given $\BF{A}\in\BB{R}^{P\times Q}$, 
	\begin{equation}\label{eqn:maximize}
	\begin{split}
	\maximize_{\BF{U}, \BF{V}}
	& \qquad
	\text{trace}\left( \BF{U}^\RM{T}\BF{A}\BF{V} \right)\\
	\text{subject to}
	& \qquad 
	\BF{U}^\RM{T}\BF{U}=
		\BF{V}^\RM{T}\BF{V}=\BF{I}_K. 
	\end{split}
	\end{equation}
This can be derived based on the fact that the SVD of $\BF{A}$ 
is closely related to the eigenvalue decomposition (EVD) 
of the symmetric matrix $\begin{bmatrix}\BF{0}&\BF{A}\\
\BF{A}^\RM{T}&\BF{0}\end{bmatrix}$ \cite[Theorem 3.3]{Demmel97}, and 
the Ky Fan trace min/max principles \cite[Theorem 1]{Fan1949}. 
See Appendix \ref{sec:app1} for more detail. 
Standard algorithms for computing the SVD of a $P\times Q$ matrix 
with $P\geq Q$ cost $\CL{O}(PQ^2)$ for computing 
full SVD \cite[Table 3]{Comon90}, and $\CL{O}(PQK)$ for computing 
$K$ dominant singular values \cite[Section 2.4]{Liu2013}. 
However, in case that $P$ and $Q$ are exponentially growing, e.g., 
$P=Q=I^N$ for some fixed $I$, the computational and storage 
complexities also grow exponentially with $N$. In order to avoid the 
``curse-of-dimensionality", the Monte-Carlo algorithm \cite{Frieze98}
was suggested but its accuracy is not high enough. 



The basic idea behind the proposed algorithms is to reshape 
(or tensorize) matrices and vectors into high-order tensors and 
compress them by applying a low-rank tensor approximation 
technique \cite{Gra2013}. Once the matrices and vectors are 
represented in low-rank tensor formats such as the tensor train (TT) 
\cite{Ose2011,Ose2009} or hierarchical Tucker (HT) 
\cite{Gra2010,Hac2009} decompositions, all the basic numerical 
operations such as the matrix-by-vector multiplication are performed 
based on the low-rank tensor formats with feasible computational 
complexities growing only linearly in $N$ \cite{Gra2010,Ose2011}. 

On the other hand, traditional low-rank tensor approximation 
techniques such as the CANDECOMP/PARAFAC (CP) 
and Tucker decompositions also compress
high-order tensors into low-parametric tensor formats \cite{Kol2009}. 
Although the CP and Tucker decompositions 
have a wide range of applications 
in chemometrics, signal processing, neuroscience, data mining, 
image processing, and numerical analysis \cite{Kol2009}, 
they have their own limitations. 
The Tucker decomposition cannot avoid the 
curse-of-dimensionality, which prohibits its application to
the tensorized large-scale data matrices \cite{Gra2013}. 
The CP decomposition does not suffer from 
the curse-of-dimensionality, but there does not exist a reliable 
and stable algorithm for best low-rank approximation 
due to the lack of closedness of the set of tensors of bounded 
tensor ranks \cite{desilva08}. 

In this paper, we focus on the TT decomposition, 
which is one of the most simplest tensor network formats 
\cite{Espig2011}. The TT and HT decompositions 
can avoid the curse-of-dimensionality by low-rank approximation, 
and possess the closedness property \cite{Espig2011,FalHac2012}. 
For numerical analysis, basic numerical operations such 
as addition and matrix-by-vector multiplication based on 
low-rank TT formats usually lead to \textit{TT-rank} growth, so 
an efficient rank-truncation should be followed. Efficient 
rank-truncation algorithms for the TT and HT decompositions
were developed in \cite{Gra2010,Ose2011}. 

The computation of extremal eigen/singular values and 
the corresponding eigen/singular vectors are
usually obtained by solving an optimization problem 
such as the one in \eqref{eqn:maximize} or 
by maximizing/minimizing the Rayleigh quotient \cite{Demmel97}. 
In order to solve large-scale optimization problems
based on the TT decomposition,
several different types of optimization algorithms have 
been suggested in the literature. 

First, existing iterative methods can be combined with truncation 
of the TT format \cite{HuckleWald2012,Leb2011,Mach2011}. 
For example, for computing several extremal eigenvalues of 
symmetric matrices, conjugate-gradient type iterative 
algorithms are combined with truncation for minimizing 
the (block) Rayleigh quotient in \cite{Leb2011,Mach2011}. 
In the case that a few eigenvectors should be computed 
simultaneously, the block of orthonormal vectors can be 
efficiently represented in {\it block TT format} \cite{Leb2011}. 
However, the whole matrix-by-vector multiplication
causes all the {\it TT-ranks} to grow at the same time, 
which leads to a very high computational cost in the 
subsequent truncation step. 

Second, alternating least squares (ALS) type algorithms reduce 
the given large optimization problem into sequential relatively 
small optimization problems, for which any standard optimization 
algorithm can be applied. The ALS algorithm developed in 
\cite{Holtz2012} is easy to implement and each iteration is 
relatively fast. But the TT-ranks should be predefined in advance 
and cannot be changed during iteration. The modified alternating 
least squares (MALS) algorithm, or equivalently density matrix 
renormalization group (DMRG) method 
\cite{Holtz2012,Kho2010,USch2011} can adaptively determine the 
TT-ranks by merging two core tensors into one bigger core tensor
and separating it by using the truncated SVD. The MALS shows 
a fast convergence in many numerical simulations. 
However, the reduced small optimization problem is solved over 
the merged bigger core tensor, which increases the computational 
and storage costs considerably in some cases. 
Dolgov et al. \cite{Dol2013b} developed an alternative ALS type 
method based on block TT format, where the mode corresponding 
to the number $K$ of orthonormal vectors is allowed to move 
to the next core tensor via the truncated SVD. This procedure can 
determine the TT-ranks adaptively if $K>1$ for the block TT format. 
Dolgov and Savostyanov \cite{DolSav2014} and 
Kressner et al. \cite{KresSteinUsh2013} further developed an 
ALS type method which adds rank-adaptivity to the block 
TT-based ALS method even if $K=1$.


In this paper, we propose the ALS and MALS type algorithms for 
computing $K$ dominant singular values of matrices which are not 
necessarily symmetric. The ALS algorithm based on block TT format 
was originally developed for block Rayleigh quotient minimization 
for symmetric matrices \cite{Dol2013b}. The MALS algorithm was also 
developed for Rayleigh quotient minimization for symmetric matrices 
\cite{Holtz2012,Kho2010,USch2011}. 
We show that the $K$ dominant singular values can be efficiently 
computed by solving the maximization problem \eqref{eqn:maximize}.
We compare the proposed algorithms with other block TT-based 
algorithms which were originally developed for computing eigenvalues 
of symmetric matrices, by simulated experiments and a theoretical 
analysis of computational complexities. 

Moreover, we present extensive numerical experiments 
for various types of structured matrices such as 
Hilbert matrix, Toeplitz matrix, random matrix 
with prescribed singular values, 
and tridiagonal matrix. 
We compare the performances of several different 
SVD algorithms, and we present the relationship between 
TT-ranks and approximation accuracy based on 
the experimental results. 
We show that the proposed block TT-based algorithms 
can achieve very high accuracy by adaptively determining 
the TT-ranks. It is shown that the proposed algorithms 
can solve very large-scale 
optimization problems for matrices of as large sizes as 
$2^{50}\times 2^{50}$ on desktop computers. 

The paper is organized as follows. 
In Section 2, notations for tensor operations and TT formats 
are described. 
In Section 3, the proposed SVD algorithms based on 
block TT format is presented. Their computational complexities 
are analyzed and computational considerations are discussed. 
In Section 4, extensive experimental results are presented
for analysis and comparison of performances 
of SVD algorithms for several types of structured matrices. 
Conclusion and discussions are given in Section 5.

\section{Tensor Train Formats}

\subsection{Notations}

We refer to \cite{Cic2009,Kol2009,Lee2014} for notations for 
tensors and multilinear operations. Scalars, vectors, and matrices
are denoted by lowercase, lowercase bold, and uppercase bold letters 
as $x$, $\BF{x}$, and $\BF{X}$, respectively. 
An $N$th order tensor $\ten{X}$ is a multi-way array of size 
$I_1\times I_2\times \cdots \times I_N$, where $I_n$ is the size 
of the $n$th dimension or mode. A vector is a 1st order tensor and 
a matrix is a 2nd order tensor. The $(i_1,i_2,\ldots,i_N)$th entry of 
$\ten{X}\in\BB{R}^{I_1\times I_2\times\cdots\times I_N}$ is denoted 
by either $x_{i_1,i_2,\ldots,i_N}$ or $\ten{X}(i_1,i_2,\ldots,i_N)$. 
Let $(i_1,i_2,\ldots,i_N)$ denote the multi-index defined by 
	\begin{equation}\label{eqn:multi_index_paran}
	(i_1,i_2,\ldots,i_N) = i_1+(i_2-1)I_1+\cdots+(i_N-1)I_1I_2\cdots I_{N-1}. 
	\end{equation}
The vectorization of a tensor 
$\ten{X}\in\BB{R}^{I_1\times I_2\times\cdots\times I_N}$
is denoted by 
	\begin{equation}
	\text{vec}( \ten{X} )\in\BB{R}^{I_1I_2\cdots I_N}, 
	\end{equation}
and each entry of $\text{vec}(\ten{X})$ is associated with each entry of 
$\ten{X}$ by 
	\begin{equation}
	\left( \text{vec}(\ten{X}) \right)_{
	(i_1,i_2,\ldots,i_N)} = 
	\ten{X}(i_1,i_2,\ldots,i_N) 
	\end{equation}
like in MATLAB. For each $n=1,2,\ldots,N$, the mode-$n$ matricization 
of a tensor $\ten{X}\in\BB{R}^{I_1\times\cdots\times I_N}$ is 
defined by 
	\begin{equation}
	\BF{X}_{(n)} \in \BB{R}^{I_n\times I_1\cdots I_{n-1}I_{n+1}\cdots I_N}
	\end{equation}
with entries 
	\begin{equation}
	\left(\BF{X}_{(n)} \right)_{i_n,(i_1,\ldots,i_{n-1},i_{n+1},\ldots,i_N)}
	= \ten{X}(i_1,i_2,\ldots,i_N). 
	\end{equation}	
Tensorization is the reverse process of the vectorization, 
by which large-scale vectors and matrices are reshaped into higher-order 
tensors. For instance, a vector of length $I_1I_2\cdots I_N$ can be 
reshaped into a tensor of size $I_1\times I_2\times \cdots \times I_N$, 
and a matrix of size $I_1I_2\cdots I_N \times J_1J_2\cdots J_N$ can 
be reshaped into a tensor of size $I_1\times I_2\times \cdots \times 
I_N \times J_1\times J_2\times\cdots\times J_N$. 

The mode-$n$ product of a tensor 
$\ten{A}\in\BB{R}^{I_1\times\cdots\times I_N}$ and 
a matrix $\BF{B}\in\BB{R}^{J\times I_n}$ is defined by 
	\begin{equation}
	\ten{C} = \ten{A}\times_n\BF{B} \in\BB{R}^{I_1\times\cdots\times 
	I_{n-1}\times J\times I_{n+1}\times\cdots\times I_N}
	\end{equation}
with entries 
	\begin{equation}
	c_{i_1,\ldots,i_{n-1},j,i_{n+1},\ldots,i_N}
	= \sum_{i_n=1}^{I_N} 
	a_{i_1,\ldots,i_N} b_{j,i_n}. 
	\end{equation}
The mode-$(M,1)$ contracted product 
of tensors $\ten{A}\in\BB{R}^{I_1\times I_2\times \cdots\times I_M}$
and $\ten{B}\in\BB{R}^{I_M\times J_2\times J_3\times \cdots\times J_N}$
is defined by 
	\begin{equation}
	\ten{C}=\ten{A}\bullet\ten{B}\in\BB{R}^{I_1\times I_2\times\cdots\times 
I_{M-1}\times J_2\times J_3\times\cdots\times J_N}
	\end{equation}
with entries 
	\begin{equation}
	c_{i_1,i_2,\ldots,i_{M-1},j_2,j_3,\ldots,j_N}
	= \sum_{i_M=1}^{I_M}
	a_{i_1,i_2,\ldots,i_M} b_{i_M,j_2,j_3,\ldots,j_N}. 
	\end{equation}
The mode-$(M,1)$ contracted product is a natural generalization 
of the matrix-by-matrix multiplication. 

Tensors and tensor operations are often represented 
as tensor network diagrams for illustrating the underlying principles 
of algorithms and tensor operations \cite{Holtz2012}. 
Figure \ref{Fig:basic_graph} shows examples of 
the tensor network diagrams for tensors and tensor operations. 
In Figure \ref{Fig:basic_graph}(a), a tensor 
is represented by a node with as many edges as its order. 
In Figure \ref{Fig:basic_graph}(b), the 
mode-$(3,1)$ contracted product is represented as the link
between two nodes. Figure \ref{Fig:basic_graph}(c)
represents the tensorization process of a vector 
into a 3rd order tensor. Figure \ref{Fig:basic_graph}(d)
represents a singular value decomposition of 
an $I\times J$ matrix into the product $\BF{U\Sigma V}^\RM{T}$. 
The matrices $\BF{U}$ and $\BF{V}$ of 
orthonormal column vectors are represented by 
half-filled circles, and the diagonal matrix $\BF{\Sigma}$
is represented by a circle with slash. 

\begin{figure}
\centering
\begin{tabular}{cc}
\includegraphics[height=2cm]{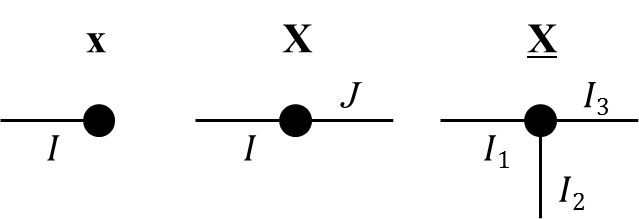} & 
\includegraphics[height=2cm]{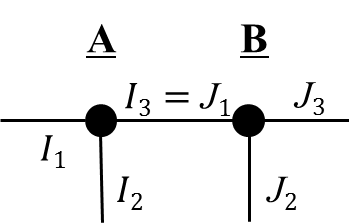} \\
(a) & (b)\\
\includegraphics[height=2cm,valign=t]{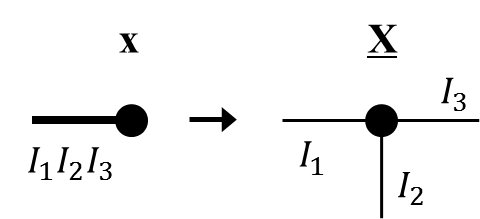} & 
\includegraphics[height=1.3cm,valign=t]{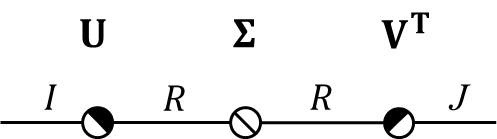} \\
(c) & (d)
\end{tabular}
\caption{\label{Fig:basic_graph}Tensor network diagrams for 
(a) a vector, a matrix, a 3rd order tensor, 
(b) the mode-$(3,1)$ contracted product of two 3rd order tensors, 
(c) the tensorization of a vector, and
(d) singular value decomposition of an $I\times J$ matrix
}
\end{figure}

\subsection{Tensor Train Format}

A tensor $\ten{X}\in\BB{R}^{I_1\times I_2\times\cdots \times I_N}$
is in TT format 
if it is represented by 
	\begin{equation}\label{eqn:TTcontract}
	\ten{X} 
	= 
	\ten{X}^{(1)} \bullet\ten{X}^{(2)} \bullet\cdots\bullet
	\ten{X}^{(N-1)} \bullet \ten{X}^{(N)}, 
	\end{equation}
where 
$\ten{X}^{(n)}\in\BB{R}^{R_{n-1}\times I_n\times R_n},n=1,2,\ldots,N$, 
are 3rd order core tensors which are called as TT-cores, and 
$R_1,R_2,\ldots,R_{N-1}$ are called as TT-ranks. 
It is assumed that $R_0=R_N=1$. 

Figure \ref{Fig:TTformat} shows the tensor network diagram for an 
$N$th order tensor in TT format. Each of the core tensors
is represented as a third order tensor except the first and the last 
TT-cores, which are matrices.  

\begin{figure}
\centering
\includegraphics[height=2.4cm]{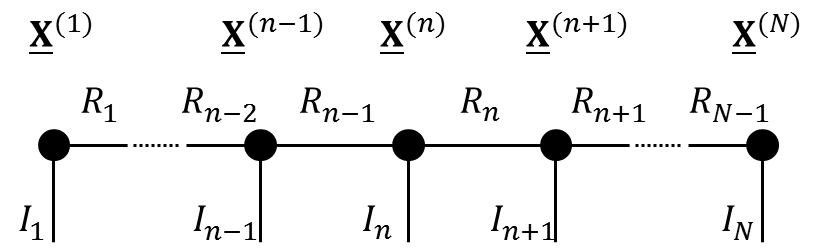}
\caption{\label{Fig:TTformat}Tensor network diagram for 
an $N$th order tensor in TT format
}
\end{figure}

The TT format is often called the matrix product states (MPS)
with open boundary conditions in quantum physics community
because each entry of $\ten{X}$ 
in (\ref{eqn:TTcontract}) can be written by the products of matrices as 
	\begin{equation}
	x_{i_1,i_2,\ldots,i_N} = \BF{X}^{(1)}_{i_1}\BF{X}^{(2)}_{i_2}
	\cdots \BF{X}^{(N)}_{i_N},
	\end{equation}
where $\BF{X}^{(n)}_{i_n} = \ten{X}^{(n)}(:,i_n,:) 
\in\BB{R}^{R_{n-1}\times R_n}$ are the slice matrices of $\ten{X}^{(n)}$. 
Note that $\BF{X}^{(1)}_{i_1}\in\BB{R}^{1\times R_1}$ 
and $\BF{X}^{(N)}_{i_N}\in\BB{R}^{R_N\times 1}$  are row and column 
vectors, respectively. Each entry of $\ten{X}$ can also be written by sums of 
scalar products 
	\begin{equation}
	x_{i_1,i_2,\ldots,i_N} = 	\sum_{r_1=1}^{R_1}\sum_{r_2=1}^{R_2}
	\cdots\sum_{r_{N-1}=1}^{R_{N-1}}
	x^{(1)}_{1,i_1,r_1}x^{(2)}_{r_1,i_2,r_2}\cdots x^{(N)}_{r_{N-1},i_N,1}, 
	\end{equation}
where $x^{(n)}_{r_{n-1},i_n,r_n}=\ten{X}^{(n)}(r_{n-1},i_n,r_n)
\in\BB{R}$ is the entry of the $n$th TT-core $\ten{X}^{(n)}$. 
The tensor $\ten{X}$ in TT format can also be written by 
the outer products of the fibers (or column vectors) as 
	\begin{equation}\label{eqn:TTouter}
	\ten{X} = 
	\sum_{r_1=1}^{R_1}\sum_{r_2=1}^{R_2}\cdots
	\sum_{r_{N-1}=1}^{R_{N-1}}
	\BF{x}^{(1)}_{1,r_1}\circ \BF{x}^{(2)}_{r_1,r_2}\circ\cdots
	\circ\BF{x}^{(N)}_{r_{N-1},1},
	\end{equation}
where $\circ$ is the outer product and $\BF{x}^{(n)}_{r_{n-1},r_n}
= \ten{X}^{(n)}(r_{n-1},:,r_n)\in\BB{R}^{I_n}$ is the mode-$2$ fiber 
of the $n$th TT-core $\ten{X}^{(n)}$. 

The storage cost for a TT format is $\CL{O}(NIR^2)$, where 
$I=\max(I_n)$ and $R=\max(R_n)$, that is linear with the order $N$. 
Any tensor can be represented exactly or approximately in 
TT format by using the TT-SVD algorithm in \cite{Ose2011}. 
Moreover, basic numerical operations such as the matrix-by-vector 
multiplication can be performed in time linear with $N$ under the 
assumption that the TT-ranks are bounded \cite{Ose2011}. 


\subsection{Tensor Train Formats for Vectors and Matrices}

Any large-scale vector or matrix can also be represented 
in TT format. We suppose that a vector 
$\BF{x}\in\BB{R}^{I_1I_2\cdots I_N}$
is tensorized into a tensor 
$\ten{X}\in\BB{R}^{I_1\times I_2\times \cdots \times I_N}$
and consider the TT format (\ref{eqn:TTcontract}) as the 
TT representation of $\BF{x}=\text{vec}(\ten{X})$. 

Similarly, a matrix $\BF{A}\in\BB{R}^{I_1I_2\cdots I_N\times 
J_1J_2\cdots J_N}$ is considered to be tensorized and permuted into 
a tensor $\ten{A}\in\BB{R}^{I_1\times J_1\times I_2\times J_2\times
\cdots\times I_N\times J_N}$. Then, as in \eqref{eqn:TTcontract}, 
the tensor $\ten{A}$ is represented in TT format as contracted products 
of TT-cores
	\begin{equation}\label{eqn:matrixTTc}
	\ten{A} 
	= 
	\ten{A}^{(1)} \bullet\ten{A}^{(2)} \bullet\cdots\bullet
	\ten{A}^{(N)}, 
	\end{equation}
where 
$\ten{A}^{(n)}\in\BB{R}^{R^A_{n-1}\times I_n\times J_n\times R^A_n},n=1,2,\ldots,N$, 
are 4th order TT-cores with TT-ranks $R^A_1,R^A_2,\ldots,R^A_{N-1}$. 
We suppose that $R^A_0=R^A_N=1$.
The entries of $\ten{A}$ can also be represented in TT format 
by the products of slice matrices
	\begin{equation}\label{eqn:matprod_matrixTT}
	a_{i_1,j_1,i_2,j_2,\ldots,i_N,j_N} = \BF{A}^{(1)}_{i_1,j_1}\BF{A}^{(2)}_{i_2,j_2}\cdots 
	\BF{A}^{(N)}_{i_N,j_N}, 
	\end{equation}
where $\BF{A}^{(n)}_{i_n,j_n} = \ten{A}^{(n)}(:,i_n,j_n,:)
\in\BB{R}^{R^A_{n-1}\times R^A_n}$ is the slice of the $n$th TT-core 
$\ten{A}^{(n)}$. 

In this paper, we call the TT formats \eqref{eqn:TTcontract} and 
(\ref{eqn:matrixTTc}) as the vector TT and matrix TT formats, 
respectively. Note that if the indices $i_n$ and $j_n$ are joined 
as $k_n = (i_n, j_n)$ in \eqref{eqn:matprod_matrixTT}, 
then the matrix TT format is reduced to the 
vector TT format. Figure \ref{Fig:matrixTTformat} shows a tensor network 
representing a matrix of size $I_1I_2\cdots I_N\times J_1J_2\cdots J_N$ 
in matrix TT format. Each of the TT-cores is represented as a 
4th order tensor except the first and the last core tensor. 

\begin{figure}
\centering
\includegraphics[height=2.7cm]{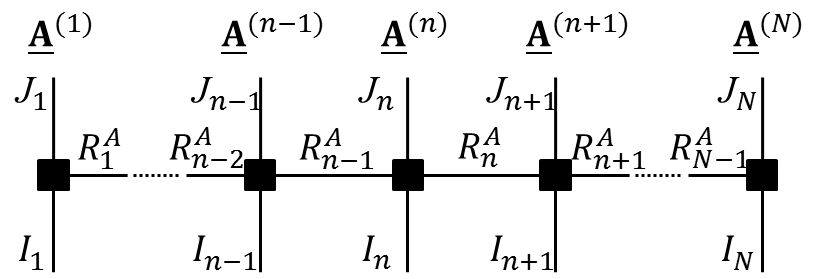}
\caption{\label{Fig:matrixTTformat}Tensor network diagram for 
a matrix of size $I_1I_2\cdots I_N\times J_1J_2\cdots J_N$ 
in matrix TT format
}
\end{figure}

\subsection{Block TT Format}

A group of several vectors can be represented in block TT format 
as follows. Let $\BF{U}= \left[\BF{u}_1,\BF{u}_2,\ldots,\BF{u}_K\right] 
\in\BB{R}^{I_1I_2\cdots I_N\times K}$ denote a matrix with 
$K$ column vectors. Suppose that the matrix $\BF{U}$ is tensorized 
and permuted into a tensor 
$\ten{U}\in\BB{R}^{I_1\times I_2\times\cdots\times I_{n-1}
\times K\times I_n\times \cdots\times I_N}$, where the mode 
of the size $K$ is located between the modes of the sizes 
$I_{n-1}$ and $I_n$ for a fixed $n$. In block TT format, 
the tensor $\ten{U}$ is represented as contracted products 
of TT-cores 
	\begin{equation}\label{eqn:blockTT}
	\ten{U} = 
	\ten{U}^{(1)} \bullet\ten{U}^{(2)} \bullet\cdots\bullet
	\ten{U}^{(N)},
	\end{equation}
where the $n$th TT-core $\ten{U}^{(n)}\in\BB{R}^{R^U_{n-1}\times K\times I_n 
\times R^U_n}$ is a 4th order tensor and the other TT-cores 
$\ten{U}^{(m)}\in\BB{R}^{R^U_{m-1}\times I_m\times R^U_m},m\neq n,$
are 3rd order tensors. 
We suppose that $R^U_0=R^U_N=1$. 
Each entry of $\ten{U}$ can be expressed by the products of slice matrices 
	\begin{equation}\label{eqn:matprod_blockTT}
	u_{i_1,i_2,\ldots,i_{n-1},k,i_n,\ldots,i_N}
	= \BF{U}^{(1)}_{i_1}\BF{U}^{(2)}_{i_2}\cdots \BF{U}^{(n-1)}_{i_{n-1}}
	\BF{U}^{(n)}_{k,i_n}\BF{U}^{(n+1)}_{i_{n+1}}\cdots 
	\BF{U}^{(N)}_{i_N}, 
	\end{equation}
where $\BF{U}^{(m)}_{i_m}\in\BB{R}^{R^U_{m-1}\times R^U_m},m\neq n,$ 
and $\BF{U}^{(n)}_{k,i_n}\in\BB{R}^{R^U_{n-1}\times R^U_n}$ are the 
slice matrices of the $m$th and $n$th TT-cores. We note that for a 
fixed $k\in\{1,2,\ldots,K\}$, the subtensor $\ten{U}^{(n)}(:,k,:,:)
\in\BB{R}^{R_{n-1}\times I_n\times R_n}$ of the $n$th TT-core 
is of order 3. Hence, the $k$th column vector $\BF{u}_k$ is in the 
vector TT format with TT-ranks bounded by $(R^U_1,\ldots,R^U_{N-1})$.


In this paper, we call the block TT format 
\eqref{eqn:blockTT} as the 
block-$n$ TT format, termed by \cite{KresSteinUsh2013},
in order to distinguish 
between different permutations of modes. 
Figure \ref{Fig:blockTTformat} shows a tensor network 
representing a matrix $\BF{U}\in\BB{R}^{I_1I_2\cdots I_N\times K}$
in block-$n$ TT format. We can see that the mode of the size $K$ is 
located at the $n$th TT-core. We remark that the position of the mode 
of the size $K$ is not fixed. 

\begin{figure}
\centering
\includegraphics[height=2.7cm]{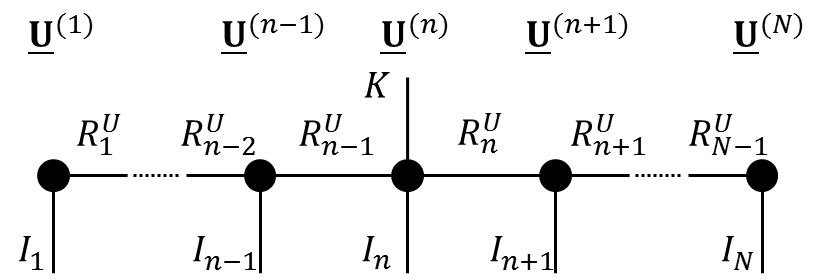}
\caption{\label{Fig:blockTTformat}Tensor network diagram for 
a group of vectors in block-$n$ TT format
}
\end{figure}

In order to clearly state 
the relationship between the TT-cores of the vectors 
$\BF{u}_1,\ldots,\BF{u}_K$ in vector TT format and 
the matrix $\BF{U}$ in block-$n$ TT format, we first define 
the full-rank condition for block-$n$ TT decompositions, 
similarly as in \cite{Holtz2011}. 

\begin{definition}[full-rank condition, \cite{Holtz2011}]
For an arbitrary tensor $\ten{U}\in\BB{R}^{I_1\times\cdots\times 
I_{n-1} \times K\times I_n\times\cdots\times I_N}$, 
a block-$n$ TT decomposition 
	\begin{equation}
	\ten{U} = \ten{U}^{(1)}\bullet\ten{U}^{(2)}\bullet
	\cdots\bullet\ten{U}^{(N)}
	\end{equation}
of TT-ranks $R^U_1,\ldots,R^U_{N-1}$ 
is called minimal or fulfilling the full-rank condition if 
all the TT-cores have full left and right ranks, i.e., 
	\begin{equation}
	R_{m-1} = rank\left(\BF{U}^{(m)}_{(1)}\right), 
	\quad 
	R_{m} = rank\left(\BF{U}^{(m)}_{(3)}\right), 
	\quad
	\text{for }m=1,\ldots,n-1,n+1,\ldots,N, 
	\end{equation}
and 
	\begin{equation}
	R_{n-1} = rank\left(\BF{U}^{(n)}_{(1)}\right), 
	\quad 
	R_{n} = rank\left(\BF{U}^{(n)}_{(4)}\right). 
	\end{equation}
\end{definition}

In principle, any collection of vectors 
$\BF{u}_1,\ldots,\BF{u}_K\in\BB{R}^{I_1I_2\cdots I_{N}}$ can 
be combined and a block-$n$ TT format for 
$\BF{U}=[\BF{u}_1,\ldots,\BF{u}_K]$ can be computed by the TT-SVD 
algorithm proposed in \cite{Ose2011}. 
As described in \cite{Holtz2011}, a minimal block-$n$ TT decomposition for 
$\BF{U}$ can be computed by applying SVD without truncation 
successively. Moreover, the TT-ranks for $\BF{U}$ are determined 
uniquely, which will be called minimal TT-ranks. 

Given a block-$n$ TT decomposition 
\eqref{eqn:blockTT} of the tensor 
$\ten{U}\in\BB{R}^{I_1\times \cdots 
\times I_{n-1}\times K\times I_n\times\cdots \times I_N}$,
we define the contracted products of the left or right TT-cores by 
	\begin{equation}
	\ten{U}^{<m} = \ten{U}^{(1)}\bullet\cdots\bullet\ten{U}^{(m-1)}
	\in\BB{R}^{I_1\times \cdots \times I_{m-1} \times R_{m-1}} 
	\end{equation}
for $m=1,2,\ldots,n$, and 
	\begin{equation}
	\ten{U}^{>m} = \ten{U}^{(m+1)}\bullet\cdots\bullet\ten{U}^{(N)}
	\in\BB{R}^{R_m\times I_{m+1}\times \cdots \times I_N}
	\end{equation}
for $m=n,n+1,\ldots,N$. We define that $\ten{U}^{<1}=\ten{U}^{>N}=1$. 
The tensors $\ten{U}^{\leq m}$ and $\ten{U}^{\geq m}$ are defined in the 
same manner. The mode-$m$ matricization of $\ten{U}^{<m}$ and the mode-$1$ 
matricization of $\ten{U}^{>m}$ are written by
	\begin{equation}
	\begin{split}
	\BF{U}^{<m}_{(m)} & \in\BB{R}^{R_{m-1}\times I_1I_2\cdots I_{m-1}}, \\
	\BF{U}^{>m}_{(1)} & \in\BB{R}^{R_{m}\times I_{m+1}I_{m+2}\cdots I_N}.
	\end{split}
	\end{equation}

We can easily derive the conditions on the vectors 
$\BF{u}_1,\ldots,\BF{u}_K$ based on 
a minimal block-$n$ TT decomposition \eqref{eqn:blockTT} 
of the matrix $\BF{U}$ of TT-ranks $R_1^U,\ldots,R_{N-1}^U$
as follows. 

\begin{prop}
\label{prop:blockTTrank}

Suppose that the matrix $\BF{U}=[\BF{u}_1,\ldots,\BF{u}_K]\in
\BB{R}^{I_1I_2\cdots I_N\times K}$ has the minimal block-$n$
TT decomposition \eqref{eqn:blockTT} of TT-ranks 
$R_1^U,\ldots,R_{N-1}^U$. Let 
	\begin{equation}
	\BF{U}_{k,m} \in\BB{R}^{I_1I_2\cdots I_m \times I_{m+1}\cdots I_N}
	\end{equation}
denote the matrix obtained by reshaping the vector 
$\BF{u}_k \in\BB{R}^{I_1I_2\cdots I_N}$, i.e, 
$\BF{u}_k = \text{vec}(\BF{U}_{k,m})$, for $m=1,2,\ldots,N$.  
Then, we can show that 
	\begin{equation}
	\text{span}\left( \BF{U}_{k,m} \right)
	\subset 
	\text{span}\left( \left(\BF{U}^{< m+1}_{(m+1)}\right)^\RM{T} \right),
	\quad m=1,2,\ldots,n-1, 
	\end{equation}
and 
	\begin{equation}
	\text{span}\left( \BF{U}_{k,m}^\RM{T} \right)
	\subset 
	\text{span}\left( \left(\BF{U}^{> m}_{(1)}\right)^\RM{T} \right),
	\quad m=n,n+1,\ldots,N, 
	\end{equation}
where $\text{span}(\BF{A})$ is the column space of a matrix $\BF{A}$. 
Consequently, the minimal TT-ranks, $R^U_{k,1},\ldots,R^U_{k,N-1}$, 
of the vector $\BF{u}_k$ are bounded by the minimal TT-ranks, 
$R^U_1,\ldots,R^U_{N-1}$, of $\BF{U}$, i.e., 
	\begin{equation}
	R^U_{k,m} \leq R^U_m, 
	\quad m=1,\ldots,N-1, \ k=1,\ldots,K. 
	\end{equation}
\end{prop}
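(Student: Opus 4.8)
The plan is to fix an index $k\in\{1,\ldots,K\}$ and exploit the fact that, by fixing the $K$-mode of the $n$th TT-core to $k$, the vector $\BF{u}_k$ inherits the vector TT decomposition
\[
\ten{U}_k = \ten{U}^{(1)}\bullet\cdots\bullet\ten{U}^{(n-1)}\bullet\ten{U}^{(n)}(:,k,:,:)\bullet\ten{U}^{(n+1)}\bullet\cdots\bullet\ten{U}^{(N)},
\]
where $\ten{U}_k$ is the tensorization of $\BF{u}_k$ and $\ten{U}^{(n)}(:,k,:,:)\in\BB{R}^{R^U_{n-1}\times I_n\times R^U_n}$ is a 3rd order slice. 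The crucial observation is that for $m<n$ the left contracted product $\ten{U}^{<m+1}$ involves only the cores $\ten{U}^{(1)},\ldots,\ten{U}^{(m)}$, all of which lie to the left of the $K$-mode and are therefore the same for every $k$; symmetrically, for $m\geq n$ the right contracted product $\ten{U}^{>m}$ is independent of $k$.

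First, for $m=1,\ldots,n-1$ I would split the contraction after the $m$th core as $\ten{U}_k=\ten{U}^{<m+1}\bullet\ten{V}_{k,m}$, where $\ten{V}_{k,m}\in\BB{R}^{R^U_m\times I_{m+1}\times\cdots\times I_N}$ collects the remaining cores (including the sliced $n$th core). Matricizing so that $(i_1,\ldots,i_m)$ indexes rows and $(i_{m+1},\ldots,i_N)$ indexes columns turns the mode-$(M,1)$ contraction into the ordinary matrix product $\BF{U}_{k,m}=\bigl(\BF{U}^{<m+1}_{(m+1)}\bigr)^\RM{T}\BF{V}_{k,m}$, with $\BF{V}_{k,m}\in\BB{R}^{R^U_m\times I_{m+1}\cdots I_N}$ the mode-$1$ matricization of $\ten{V}_{k,m}$. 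Since every column of $\BF{U}_{k,m}$ is then a linear combination of the $R^U_m$ columns of $\bigl(\BF{U}^{<m+1}_{(m+1)}\bigr)^\RM{T}$, the first span inclusion follows.

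Next, for $m=n,\ldots,N$ I would split the contraction in the opposite direction, $\ten{U}_k=\ten{W}_{k,m}\bullet\ten{U}^{>m}$ with $\ten{W}_{k,m}\in\BB{R}^{I_1\times\cdots\times I_m\times R^U_m}$, and the same matricization gives $\BF{U}_{k,m}=\BF{W}_{k,m}\BF{U}^{>m}_{(1)}$; transposing yields $\BF{U}_{k,m}^\RM{T}=\bigl(\BF{U}^{>m}_{(1)}\bigr)^\RM{T}\BF{W}_{k,m}^\RM{T}$, which gives the second span inclusion. The rank bound then follows from the standard characterization of the minimal TT-ranks via matricizations (cf. \cite{Ose2011,Holtz2011}), namely $R^U_{k,m}=\text{rank}(\BF{U}_{k,m})$: for $m<n$ we get $R^U_{k,m}=\dim\text{span}(\BF{U}_{k,m})\leq\dim\text{span}\bigl((\BF{U}^{<m+1}_{(m+1)})^\RM{T}\bigr)\leq R^U_m$ because the generating matrix has only $R^U_m$ columns, and for $n\leq m\leq N-1$ the analogous bound uses $\text{rank}(\BF{U}_{k,m})=\text{rank}(\BF{U}_{k,m}^\RM{T})$.

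I expect the only delicate step to be the bookkeeping: verifying that the mode-$(M,1)$ contracted product really collapses to the displayed matrix products under the chosen row/column splittings, and in particular confirming that $\ten{U}^{<m+1}$ for $m<n$ and $\ten{U}^{>m}$ for $m\geq n$ never contain the $K$-mode, so that the two bounding subspaces are genuinely the same for all $k$. Everything else is a routine consequence of the fact that a matrix product cannot increase the column space beyond that of its left factor.
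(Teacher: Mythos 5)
Your proposal is correct and follows essentially the same route as the paper: both factor $\BF{U}_{k,m}$ as $\bigl(\BF{U}^{<m+1}_{(m+1)}\bigr)^\RM{T}$ times a $k$-dependent right factor for $m<n$ (and symmetrically with $\BF{U}^{>m}_{(1)}$ for $m\geq n$), and then invoke the characterization $R^U_{k,m}=\text{rank}(\BF{U}_{k,m})$ of minimal TT-ranks from \cite{Holtz2011}. Your write-up merely spells out the bookkeeping that the paper leaves implicit, including the observation that the $k$-independent factor never contains the $K$-mode.
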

\begin{proof}
Since the vector $\BF{u}_k$ is represented in vector TT format 
as in \eqref{eqn:matprod_blockTT}, 
the matrices $\BF{U}_{k,m}$ are represented by 
	\begin{equation}
	\begin{split}
	\BF{U}_{k,m} &= \left( \BF{U}^{<m+1}_{(m+1)} \right)^\RM{T} 
		\BF{U}^{>m}_{k,(1)}, 
	\quad m=1,2,\ldots,n-1, \\
	\BF{U}_{k,m} &= \left( \BF{U}^{<m+1}_{k,(m+1)} \right)^\RM{T} 
		\BF{U}^{>m}_{(1)}, 
	\quad m=n,n+1,\ldots,N, 
	\end{split} 
	\end{equation}
where $\BF{U}^{>m}_{k,(1)}$ and $\BF{U}^{<m+1}_{k,(m+1)}$
are the matricizations of subtensors of $\ten{U}^{>m}$ and $\ten{U}^{<m+1}$. 
It holds that the minimal TT-ranks $R^U_{k,m}=rank(\BF{U}_{k,m})$ by 
\cite{Holtz2011}. This proves the Proposition \ref{prop:blockTTrank}. 
\end{proof}

\subsection{Matricization of Block TT Format}

A matrix $\BF{U}\in\BB{R}^{I_1I_2\cdots I_N\times K}$ 
having a block-$n$ TT decomposition \eqref{eqn:blockTT} can be expressed 
as a product of matrices, which is useful for describing algorithms 
based on block TT formats. For a fixed $n$, frame matrices
are defined as follows. 

\begin{definition}[Frame matrix, \cite{Dol2013b,Holtz2012,KresSteinUsh2013}]
The frame matrices
$\BF{U}^{\neq n}\in\BB{R}^{I_1I_2\cdots I_N\times R_{n-1}I_nR_n}$
and $\BF{U}^{\neq n-1,n}\in\BB{R}^{I_1I_2\cdots I_N\times R_{n-2}I_{n-1}I_nR_n}$
 are defined by 
	\begin{equation}
	\BF{U}^{\neq n} = \left( \BF{U}^{>n}_{(1)} \right)^\RM{T} \otimes 
					\BF{I}_{I_n} \otimes
					\left( \BF{U}^{<n}_{(n)} \right)^\RM{T}
	\end{equation}
and 
	\begin{equation}
	\BF{U}^{\neq n-1,n} = \left( \BF{U}^{>n}_{(1)} \right)^\RM{T} \otimes 
					\BF{I}_{I_{n}} \otimes
					\BF{I}_{I_{n-1}} \otimes
					\left( \BF{U}^{<n-1}_{(n-1)} \right)^\RM{T}.
	\end{equation}
\end{definition}

The block-$n$ TT tensor $\ten{U}$ is written by $\ten{U}  = \ten{U}^{<n}
\bullet \ten{U}^{(n)} \bullet \ten{U}^{>n},$
where the $n$th TT-core is a 4th order tensor, 
$\ten{U}^{(n)}\in\BB{R}^{R_{n-1}\times K\times I_n\times R_n}$. 
The matrix $\BF{U}$ is the transpose of the mode-$n$ matricization 
$\BF{U}_{(n)}\in\BB{R}^{K\times I_1I_2\cdots I_N}$ of $\ten{U}$, which 
can be expressed by
	\begin{equation}\label{eqn:linearize01}
	\begin{split}
	\BF{U}_{(n)} &
	= \left( \ten{U}^{<n}\bullet \ten{U}^{(n)} \bullet \ten{U}^{>n}  \right)_{(n)}
	\\
	&= \left( \ten{U}^{(n)}
			 \times_1 \left(\BF{U}^{<n}_{(n)}\right)^\RM{T}
			 \times_4 \left(\BF{U}^{>n}_{(1)}\right)^\RM{T}
	    \right)_{(2)}
	\\
	&= \BF{U}^{(n)}_{(2)}
	\left(\BF{U}^{>n}_{(1)}\otimes\BF{I}_{I_n}
	\otimes \BF{U}^{<n}_{(n)} \right).
	\end{split}
	\end{equation}
Next, we consider the contraction of two neighboring core tensors as
	\begin{equation}
	\ten{U}^{(n-1,n)} = \ten{U}^{(n-1)}\bullet\ten{U}^{(n)}
	\in\BB{R}^{R_{n-2}\times I_{n-1}\times K\times I_n\times R_n}.
	\end{equation}
Then the block-$n$ TT tensor $\ten{U}$ 
is written by 
	$\ten{U}  = \ten{U}^{<n-1}\bullet \ten{U}^{(n-1,n)} \bullet \ten{U}^{>n},$ and we can get an another expression for
the mode-$n$ matricization as 
	\begin{equation}\label{eqn:linearize02}
	\BF{U}_{(n)} = \BF{U}^{(n-1,n)}_{(3)}
	\left(\BF{U}^{>n}_{(1)}\otimes\BF{I}_{I_{n}}
	\otimes\BF{I}_{I_{n-1}}
	\otimes \BF{U}^{<n-1}_{(n-1)} \right)
	\in\BB{R}^{K\times I_1I_2\cdots I_N}. 
	\end{equation}
From \eqref{eqn:linearize01} and \eqref{eqn:linearize02}, 
the matrix $\BF{U}=[\BF{u}_1,\BF{u}_2,\ldots,\BF{u}_K]
\in\BB{R}^{I_1I_2\cdots I_N\times K}$ in block-$n$ TT format 
can be written by 
	\begin{equation}\label{eqn:Umatrix_als}
	\BF{U} = \BF{U}^{\neq n}\BF{U}^{(n)}, 
	\qquad n=1,2,\ldots,N,
	\end{equation}
where $\BF{U}^{(n)}=( \BF{U}^{(n)}_{(2)} )^\RM{T}
\in\BB{R}^{R_{n-1}I_nR_n\times K}$, and by 
	\begin{equation}\label{eqn:Umatrix_dmrg}
	\BF{U} = \BF{U}^{\neq n-1,n}\BF{U}^{(n-1,n)}, 
	\qquad n=2,3,\ldots,N,
	\end{equation}
where $\BF{U}^{(n-1,n)} = ( \BF{U}^{(n-1,n)}_{(3)} )^\RM{T}
\in\BB{R}^{R_{n-2}I_{n-1}I_{n}R_{n}\times K}$.

\subsection{Orthogonalization of Core Tensors}

\begin{definition}[Left- and right-orthogonality, \cite{Holtz2011}]
A 3rd order core tensor 
$\ten{U}^{(m)}\in\BB{R}^{R_{m-1}\times I_m\times R_m}$
is called left-orthogonal if 
	\begin{equation}
	\BF{U}_{(3)}^{(m)}\left(\BF{U}_{(3)}^{(m)}\right)^\RM{T} = \BF{I}_{R_m}, 
	\end{equation}
and right-orthogonal if 
	\begin{equation}
	\BF{U}_{(1)}^{(m)}\left(\BF{U}_{(1)}^{(m)}\right)^\RM{T} = \BF{I}_{R_{m-1}}. 
	\end{equation}
\end{definition}

We can show that the matricizations $\BF{U}^{<n}_{(n)}$ and 
$\BF{U}^{>n}_{(1)}$ have orthonormal rows if the left core tensors 
$\ten{U}^{(1)},\ldots,\ten{U}^{(n-1)}$ are left-orthogonalized and 
the right core tensors $\ten{U}^{(n+1)},\ldots,\ten{U}^{(N)}$ are 
right-orthogonalized \cite{Lee2014}. Consequently, the frame matrices 
$\BF{U}^{\neq n}$ and $\BF{U}^{\neq n-1,n}$ have orthonormal 
columns if each of the left and right core tensors is properly 
orthogonalized. From the expressions \eqref{eqn:Umatrix_als} and 
\eqref{eqn:Umatrix_dmrg}, 
we can guarantee orthonormality of the columns of $\BF{U}$
by orthogonalizing the TT-cores. 
Figure \ref{Fig:blockorthTTformat} shows a tensor network diagram  
for the matrix $\BF{U}$ in block-$n$ TT format where 
all the core tensors are either left or right orthogonalized 
except the $n$th core tensor. In this case we can guarantee 
that the frame matrices $\BF{U}^{\neq n}$ and $\BF{U}^{\neq n-1,n}$
have orthonormal columns. 

\begin{figure}
\centering
\includegraphics[height=2.7cm]{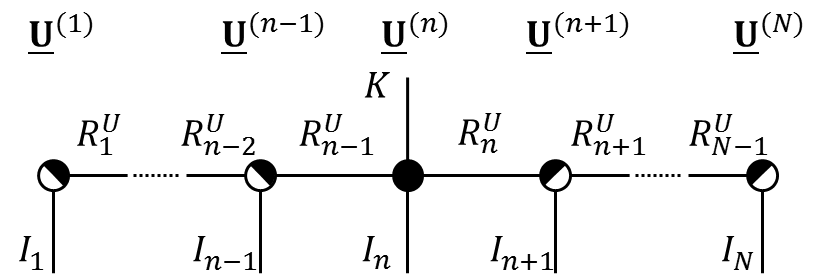}
\caption{\label{Fig:blockorthTTformat}Tensor network diagram for 
a group of vectors in block-$n$ TT format whose 
TT-cores are either left- or right-orthogonalized except the 
$n$th TT-core. The left- or right-orthogonalized tensors are 
represented by half-filled circles. }
\end{figure}

\section{SVD Algorithms Based on Block TT Format}

In this section, we describe the new SVD algorithms, 
which we call the ALS for SVD (ALS-SVD) 
and MALS for SVD (MALS-SVD). 

In the ALS-SVD and MALS-SVD, 
the left and right singular vectors 
$\BF{U}=[\BF{u}_1,\ldots,\BF{u}_N]\in\BB{R}^{I_1I_2\cdots I_N\times K}$ and 
$\BF{V}=[\BF{v}_1,\ldots,\BF{v}_N]\in\BB{R}^{J_1J_2\cdots J_N\times K}$ are initialized 
with block-$N$ TT formats. 
For each $n\in\{1,2,\ldots,N\}$, we suppose that 
$\BF{U}$ and $\BF{V}$ are represented by 
block-$n$ TT formats
	\begin{equation}
	\ten{U} = \ten{U}^{(1)}\bullet\ten{U}^{(2)}\bullet\cdots\bullet
			\ten{U}^{(N)}, \qquad
	\ten{V} = \ten{V}^{(1)}\bullet\ten{V}^{(2)}\bullet\cdots\bullet
			\ten{V}^{(N)},
	\end{equation}
where the $n$th core tensors 
$\ten{U}^{(n)}\in\BB{R}^{R^U_{n-1}\times K\times I_n\times R^U_n}$
and $\ten{V}^{(n)}\in\BB{R}^{R^V_{n-1}\times K\times J_n\times R^V_n}$
are 4th order tensors and the other core tensors are 3rd order tensors. 
We suppose that all the $1,2,\ldots,(n-1)$th core tensors are 
left-orthogonalized and all the $n+1,n+2,\ldots,N$th core tensors are 
right-orthogonalized.

Figure \ref{fig:figUAV} illustrates the tensor network diagrams representing 
the $\text{trace}\left(\BF{U}^\RM{T}\BF{AV}\right)$ in 
the maximization problem \eqref{eqn:maximize}.
Note that the matrix $\BF{A}\in\BB{R}^{I_1I_2\cdots I_N\times 
J_1J_2\cdots J_N}$ is in matrix TT format. 
In the algorithms we don't need to compute the large-scale 
matrix-by-vector products $\BF{AV}$ or $\BF{A}^\RM{T}\BF{U}$. 
All the necessary basic computations are performed based on 
efficient contractions of core tensors.

\begin{figure}
\centering
\begin{tabular}{cc}
\includegraphics[height=6.2cm]{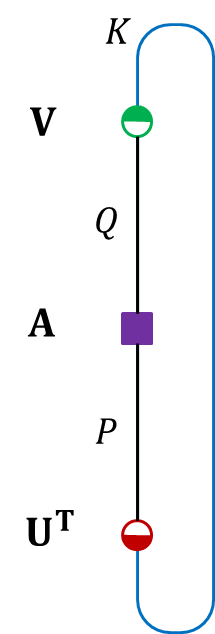}
& 
\includegraphics[height=6.2cm]{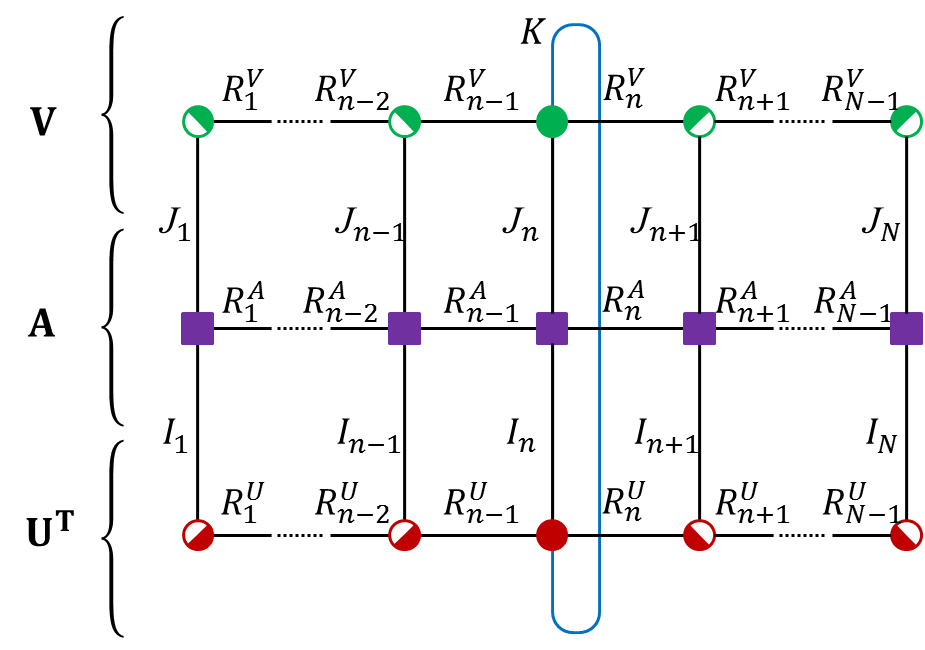}
\\
(a) 
& 
(b) 
\end{tabular}
\caption{\label{fig:figUAV}Tensor network diagrams for the 
$\text{trace}\left(\BF{U}^\RM{T}\BF{AV}\right)$ in the maximization 
problem \eqref{eqn:maximize}. (a) $\BF{U}\in\BB{R}^{P\times K}$ 
and $\BF{V}\in\BB{R}^{Q\times K}$ are matrices with orthonormal column 
vectors and $\BF{A}\in\BB{R}^{P\times Q}$ is a matrix. 
(b) The matrices $\BF{U}$ and $\BF{V}$ are represented in block-$n$ TT 
format and the matrix $\BF{A}$ is represented in matrix TT format, 
where $P=I_1I_2\cdots I_N$ and $Q=J_1J_2\cdots J_N$. Each of the 
TT-cores of $\BF{U}$ and $\BF{V}$ are orthogonalized in order to 
keep the orthogonality constraint.}
\end{figure}

\subsection{ALS for SVD Based on Block TT Format}

The ALS algorithm for SVD based on block TT format 
is described in Algorithm \ref{Alg:ALS}. Recall that, from 
\eqref{eqn:Umatrix_als}, the matrices $\BF{U}$ and $\BF{V}$ of 
singular vectors are written by 
	\begin{equation}
	\BF{U} = \BF{U}^{\neq n}\BF{U}^{(n)}, 
	\qquad
	\BF{V} = \BF{V}^{\neq n}\BF{V}^{(n)}, 
	\end{equation} 
where $\BF{U}^{(n)}=(\BF{U}^{(n)}_{(2)})^\RM{T}
\in\BB{R}^{R^U_{n-1}I_nR^U_n\times K}$ and 
$\BF{V}^{(n)}=(\BF{V}^{(n)}_{(2)})^\RM{T}
\in\BB{R}^{R^V_{n-1}J_nR^V_n\times K}$. 
Note that $(\BF{U}^{\neq n})^\RM{T}\BF{U}^{\neq n}=
\BF{I}_{R^U_{n-1}I_nR^U_n}$ and 
$(\BF{V}^{\neq n})^\RM{T}\BF{V}^{\neq n} = \BF{I}_{R^V_{n-1}I_nR^V_n}.$
Given that all the core tensors are fixed except 
the $n$th core tensors, the maximization problem 
\eqref{eqn:maximize} is reduced to the 
smaller optimization problem as
	\begin{equation}\label{max_als}
	\begin{split}
	\maximize_{\BF{U}^{(n)}, \BF{V}^{(n)}}
	& \qquad
	\text{trace}\left(\BF{U}^\RM{T}\BF{AV}\right)
	= \text{trace}\left( (\BF{U}^{(n)} )^\RM{T}\overline{\BF{A}}_n\BF{V}^{(n)}\right)\\
	\text{subject to}
	& \qquad 
	(\BF{U}^{(n)})^\RM{T}\BF{U}^{(n)}=
	(\BF{V}^{(n)})^\RM{T}\BF{V}^{(n)}=\BF{I}_K, 
	\end{split}
	\end{equation}
where the matrix $\overline{\BF{A}}_n$ is defined by
	\begin{equation}
	\overline{\BF{A}}_n = (\BF{U}^{\neq n})^\RM{T}\BF{A}\BF{V}^{\neq n}
	\in\BB{R}^{R^U_{n-1}I_nR^U_n\times R^V_{n-1}J_nR^V_n}. 
	\end{equation}
We call $\overline{\BF{A}}_n$ as the \textit{projected} matrix. 
In the case that the TT-ranks 
$\{R^U_n\}$ and $\{R^V_n\}$ are small enough, the projected matrix 
$\overline{\BF{A}}_n$ has much smaller sizes than 
$\BF{A}\in\BB{R}^{I_1I_2\cdots I_N\times J_1J_2\cdots J_N}$, 
so that any standard and efficient SVD algorithms can be applied. 
In Section \ref{sec:3_3}, we will describe how the reduced local optimization 
problem \eqref{max_als} can be efficiently solved. 
In practice, the matrix $\overline{\BF{A}}_n$ don't need to be computed
explicitly. Instead, the local matrix-by-vector multiplications 
$\overline{\BF{A}}_n^\RM{T}\BF{u}$ and $\overline{\BF{A}}_n\BF{v}$
for any vectors $\BF{u}\in\BB{R}^{R^U_{n-1}I_nR^U_n}$ and 
$\BF{v}\in\BB{R}^{R^V_{n-1}J_nR^V_n}$ are computed more efficiently
based on the contractions of core tensors of $\BF{U}$, $\BF{A}$, and $\BF{V}$.

\begin{algorithm}
\caption{\label{Alg:ALS}ALS for SVD based on block TT format}
  \SetKwInOut{Input}{Input}
  \SetKwInOut{Output}{Output}

  \Input{$\BF{A}\in\BB{R}^{I_1I_2\cdots I_N\times J_1J_2\cdots J_N}$ 
	in matrix TT format, $K\geq 2$, $\delta\geq 0$ (truncation parameter)}
  \Output{Dominant singular values $\BF{\Sigma}=\text{diag}(\sigma_1,\sigma_2,\ldots,\sigma_K)$
		and corresponding 
		singular vectors $\BF{U} \in\BB{R}^{I_1I_2\cdots I_N\times K}$ and 
		 $\BF{V} \in\BB{R}^{J_1J_2\cdots J_N\times K}$ in block-$N$ TT format 
		with TT-ranks $R^U_1,R^U_2,\ldots,R^U_{N-1}$ for $\BF{U}$
		and $R^V_1,R^V_2,\ldots,R^V_{N-1}$ for $\BF{V}$. }
  Initialize $\BF{U}$ and $\BF{V}$ 
		in block-$N$ TT format with 
		left-orthogonalized TT-cores $\ten{U}^{(1)},\ldots,
		\ten{U}^{(N-1)},\ten{V}^{(1)},\ldots,\ten{V}^{(N-1)}$ and 
		small TT-ranks $R_1^U,\ldots,R_{N-1}^U,R_1^V,\ldots,R_{N-1}^V$. 
\\
  Compute the 3rd order tensors $\ten{L}^{<1},\ldots,\ten{L}^{<N}$ 
  recursively by \eqref{def:psileft0} and \eqref{def:psileft}. Set $\ten{R}^{>N}=1$.
\\
\Repeat{a stopping criterion is met (See Section \ref{sec:stopping})}{
  \For(right-to-left half sweep){$n=N,N-1,\ldots,2$}{
	\tcp{Optimization}
		Compute $\BF{U}^{(n)}$ and $\BF{V}^{(n)}$ by solving \eqref{max_als}. 
\\
		Update the singular values $\BF{\Sigma} = 
		(\BF{U}^{(n)})^\RM{T}\overline{\BF{A}}_n\BF{V}^{(n)}$. 
\\
	\tcp{Matrix Factorization and Adaptive Rank Estimation}
		Reshape $\ten{U}^{(n)}
		= reshape(\BF{U}^{(n)}, [R^U_{n-1},  I_n,  R^U_n,  K])$,  
		$\ten{V}^{(n)}
		= reshape(\BF{V}^{(n)}, [R^V_{n-1},  J_n,  R^V_n,  K]).$
\\
		Compute $\delta$-truncated SVD: 
			\begin{equation}
			\begin{split}
			[\BF{U}_1, \BF{S}_1, \BF{V}_1] &= \RM{SVD}_\delta
		  	\left( \BF{U}^{(n)}_{(\{1,4\}\times\{2,3\})} \right),
			\\
			[\BF{U}_2, \BF{S}_2, \BF{V}_2] &= \RM{SVD}_\delta
			\left( \BF{V}^{(n)}_{(\{1,4\}\times\{2,3\})} \right). 
			\end{split}
			\end{equation}
\\
		Set $R^{U,new}=rank(\BF{V}_1)$, $R^{V,new}=rank(\BF{V}_2)$.
\\
		Update TT-cores 
			\begin{equation}
			\begin{split}
			\ten{U}^{(n)} &= reshape(\BF{V}_1^\text{T}, [R^{U,new},  I_n,  R^U_n]),  
			\\
			\ten{V}^{(n)} &= reshape(\BF{V}_2^\text{T}, [R^{V,new}, J_n,  R^V_n]). 
			\end{split}
			\end{equation}
\\
		Compute multiplications
			\begin{equation}
			\begin{split}
			\BF{U}^{(n-1)} &= reshape(\ten{U}^{(n-1)}, [R^U_{n-2}I_{n-1},R^U_{n-1}]) 
		   	\cdot reshape(\BF{U}_1\BF{S}_1,[R^U_{n-1},KR^{U,new}]), 
			\\
			\BF{V}^{(n-1)} &= reshape(\ten{V}^{(n-1)}, [R^V_{n-2}J_{n-1},R^V_{n-1}]) 
		   	\cdot reshape(\BF{V}_1\BF{S}_1,[R^V_{n-1},KR^{V,new}]). 
			\end{split}
			\end{equation}
\\
		Update TT-cores 
			\begin{equation}
			\begin{split}
			\ten{U}^{(n-1)} &= reshape(\BF{U}^{(n-1)}, [R^U_{n-2},I_{n-1},K,R^{U,new}]), 
			\\
			\ten{V}^{(n-1)} &= reshape(\BF{V}^{(n-1)}, [R^V_{n-2},J_{n-1},K,R^{V,new}]). 
			\end{split}
			\end{equation}
\\
		Update TT-ranks $R^U_{n-1}=R^{U,new}$, $R^V_{n-1}=R^{V,new}$.
\\
		Compute the 3rd order tensor $\ten{R}^{>n-1}$ by \eqref{def:psiright}
  }
  \For{$n=1,2,\ldots,N-1$}{
		Carry out left-to-right half sweep similarly
  }
}
\end{algorithm}

We note that the $K$ dominant singular values
$\BF{\Sigma}=\text{diag}(\sigma_1,\sigma_2,\ldots,\sigma_K)$ 
are equivalent to the $K$ dominant singular values of the 
projected matrix $\overline{\BF{A}}_n$ in the sense that 
$\BF{\Sigma} = \BF{U}^\RM{T}\BF{AV} = 
(\BF{U}^{(n)})^\RM{T}\overline{\BF{A}}_n\BF{V}^{(n)}$. 
Hence, the singular values are updated at each iteration 
by the singular values estimated by 
the standard SVD algorithms for the reduced optimization 
problem \eqref{max_als}. 

The TT-ranks of block TT formats are adaptively determined 
by separating the mode corresponding to the size $K$ from 
the $n$th TT-cores by using $\delta$-truncated SVD. 
We say that the iteration is during the right-to-left half sweep 
if the mode of the size $K$ is moving from the $n$th TT-core 
to the $(n-1)$th TT-core, whereas the iteration is during the 
left-to-right halft sweep if the mode of the size $K$ moves
from the $n$th TT core to the $(n+1)$th TT-core. 
During the right-to-left half sweep, 
the $\delta$-truncated SVD 
decomposes unfolded $n$th TT-cores as 
	\begin{equation}\label{eqn:deltaSVDshift}
	\begin{split}
	\BF{U}^{(n)}_{(\{1,4\}\times\{2,3\})}
	& = \BF{U}_1 \BF{S}_1 \BF{V}_1^\RM{T} + \BF{E}_1
	\in\BB{R}^{R^U_{n-1}K\times I_{n}R^U_{n}}, \\
	\BF{V}^{(n)}_{(\{1,4\}\times\{2,3\})}
	& = \BF{U}_2 \BF{S}_2 \BF{V}_2^\RM{T} +\BF{E}_2
	\in\BB{R}^{R^V_{n-1}K\times J_{n}R^V_{n}}, 
	\end{split}
	\end{equation}
where $\|\BF{E}_1\|_\RM{F}\leq \delta\|\ten{U}^{(n)}\|_\RM{F}$ and 
$\|\BF{E}_2\|_\RM{F}\leq \delta\|\ten{V}^{(n)}\|_\RM{F}$. 
Then, the TT-ranks are updated by $R^U_{n-1} = rank(\BF{V}_1)$
and $R^V_{n-1} = rank(\BF{V}_2)$, which are simply the numbers of 
columns of $\BF{V}_1$ and $\BF{V}_2$. The $n$th TT-cores are 
updated by reshaping $\BF{V}_1^\RM{T}$ and $\BF{V}_2^\RM{T}$. 
Note that in the case that $K=1$, the TT-ranks $R^U_{n-1}$ and 
$R^V_{n-1}$ cannot be increased because, for example,  
	\begin{equation}\label{eqn:boundTTrank}
	R^{U,new} = rank(\BF{V}_1)= rank(\BF{U}_1\BF{S}_1) \leq R^U_{n-1}K = R^U_{n-1}. 
	\end{equation}

Figure \ref{Fig:ALS} illustrates the ALS scheme based on block TT format 
for the first two iterations. In the figure, the $n$th TT-core is computed 
by a local optimization algorithm for the maximization problem \eqref{max_als}, 
and then the block-$n$ TT format
is converted to the block-$(n-1)$ TT format via the $\delta$-truncated SVD. 

\begin{figure}
\centering
\includegraphics[width=10cm]{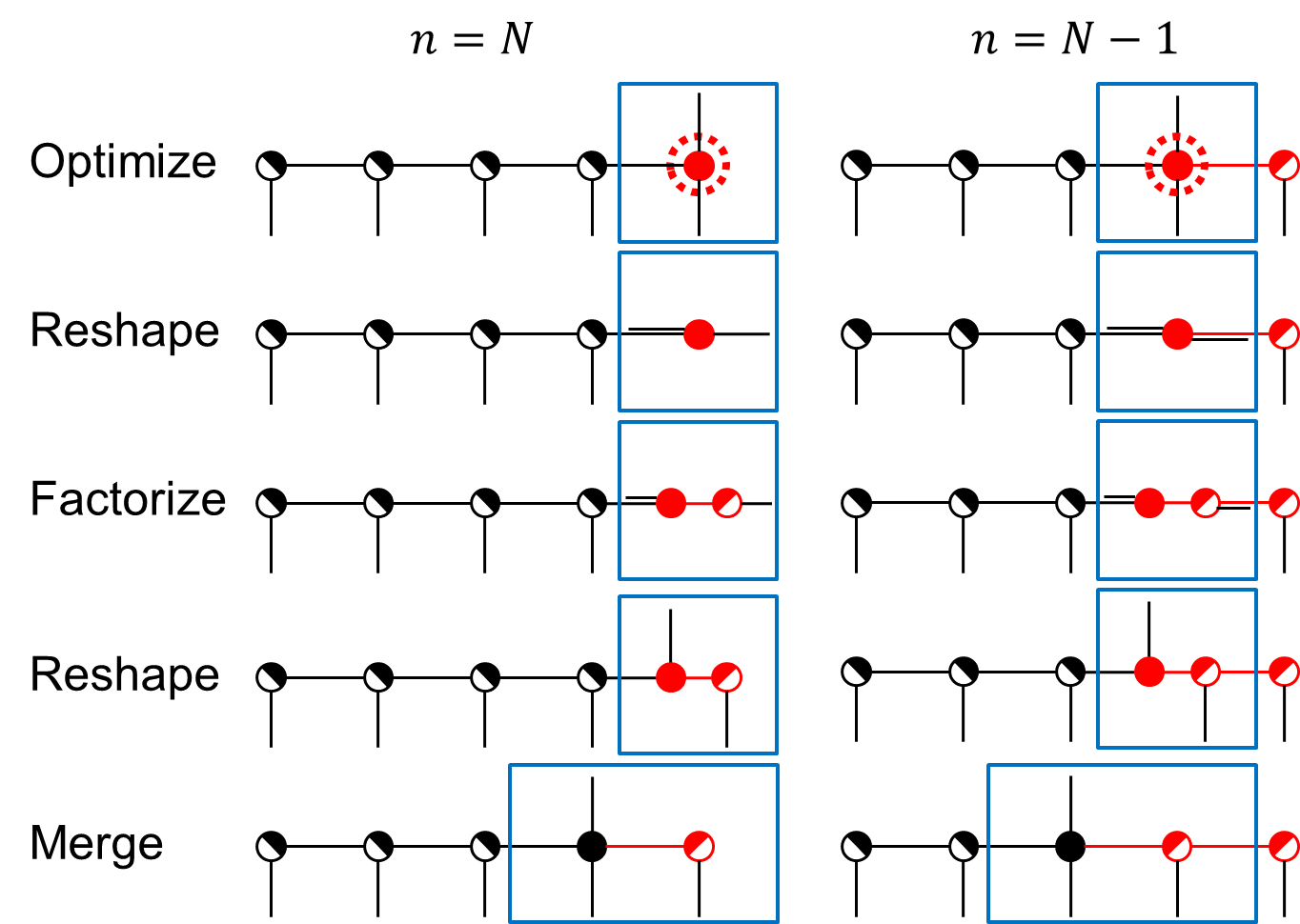} 
\caption{\label{Fig:ALS}Illustration of 
the ALS scheme based on block TT format 
for the first two iterations 
during right-to-left half sweep}
\end{figure}

\subsection{MALS for SVD Based on Block TT Format}

In the ALS scheme, the TT-ranks cannot be adaptively determined 
if $K=1$. Moreover, a small value of $K$ often slows the rate of 
convergence because of the relatively slow growth of TT-ranks,
which is described in the inequality in \eqref{eqn:boundTTrank}. 
On the other hand, the MALS scheme shows relatively 
fast convergence and the TT-ranks can be adaptively determined 
even if $K=1$. 

The MALS algorithm for SVD is described in Algorithm 
\ref{Alg:MALS}. In the MALS scheme, 
the right-to-left half sweep means the iterations when 
the $(n-1)$th and $n$th TT-cores are updated for $n=N,N-1,\ldots,2$, 
and the left-to-right half sweep means the iterations when the 
$n$th and $(n+1)$th TT-cores are updated for $n=1,2,\ldots,N-1$. 
At each iteration during the right-to-left half sweep, 
the left and right singular vectors $\BF{U}$ and $\BF{V}$
are represented in block-$n$ TT format. 
From \eqref{eqn:Umatrix_dmrg}, 
the matrices $\BF{U}$ and $\BF{V}$ are written by 
	\begin{equation}
	\BF{U}=\BF{U}^{\neq n-1,n}\BF{U}^{(n-1,n)}, 
	\qquad 
	\BF{V}=\BF{V}^{\neq n-1,n}\BF{V}^{(n-1,n)} 
	\end{equation}
for $n=2,3,\ldots,N,$
where $\BF{U}^{(n-1,n)}\in\BB{R}^{R^U_{n-2}I_{n-1}I_{n}R^U_{n}\times K}$ and $\BF{V}^{(n-1,n)}\in\BB{R}^{R^V_{n-2}J_{n-1}J_{n}R^V_{n}\times K}$
are matricizations of the merged TT-cores 
$\ten{U}^{(n-1)}\bullet\ten{U}^{(n)}$ and 
$\ten{V}^{(n-1)}\bullet\ten{V}^{(n)}$. 
Given that all the TT-cores are fixed except the $(n-1)$th and $n$th TT-cores, the large-scale optimization problem \eqref{eqn:maximize} is
reduced to 
	\begin{equation}\label{max_mals}
	\begin{split}
	\maximize_{\BF{U}^{(n-1,n)}, \BF{V}^{(n-1,n)}}
	& \qquad
	\text{trace}\left(\BF{U}^\RM{T}\BF{AV}\right)
	= \text{trace}\left( (\BF{U}^{(n-1,n)} )^\RM{T}\overline{\BF{A}}_{n-1,n}\BF{V}^{(n-1,n)}\right)\\
	\text{subject to}
	& \qquad 
	(\BF{U}^{(n-1,n)})^\RM{T}\BF{U}^{(n-1,n)}=
	(\BF{V}^{(n-1,n)})^\RM{T}\BF{V}^{(n-1,n)}=\BF{I}_K, 
	\end{split}
	\end{equation}
where 
	\begin{equation}
	\overline{\BF{A}}_{n-1,n} = 
(\BF{U}^{\neq n-1,n})^\RM{T}\BF{A}\BF{V}^{\neq n-1,n} \in\BB{R}^{R^U_{n-2}I_{n-1}I_{n}R^U_{n}
\times R^V_{n-2}J_{n-1}J_{n}R^V_{n}}
	\end{equation}
is called as the projected matrix. 

Figure \ref{Fig:MALS} illustrates the MALS scheme. In the MALS, 
two neighboring core tensors are first merged and updated 
by solving the optimization problem \eqref{max_mals}. 
Then, the $\delta$-truncated SVD factorizes it back into two 
core tensors. The block-$n$ TT format is transformed 
into either the block-$(n-1)$ TT format or the 
block-$(n+1)$ TT format consequently. 

\begin{figure}
\centering
\includegraphics[width=10cm]{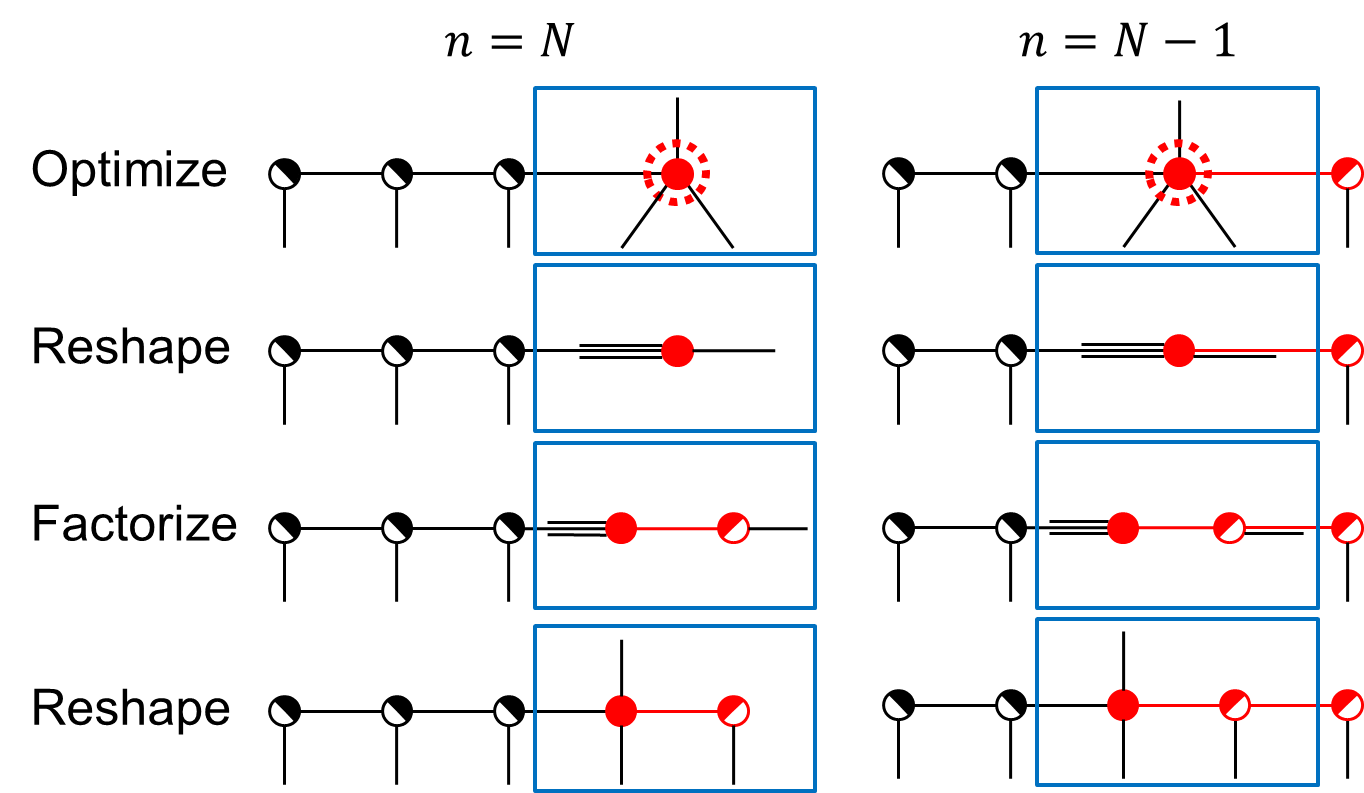} 
\caption{\label{Fig:MALS}Illustration of 
the MALS scheme based on block TT format 
for the first two iterations 
during right-to-left half sweep}
\end{figure}

\begin{algorithm}
\caption{\label{Alg:MALS}MALS for SVD based on block TT format}
  \SetKwInOut{Input}{Input}
  \SetKwInOut{Output}{Output}

  \Input{$\BF{A}\in\BB{R}^{I_1I_2\cdots I_N\times J_1J_2\cdots J_N}$ 
		in matrix TT format, $K\geq 1$, $\delta\geq 0$ (truncation parameter)}
  \Output{Dominant singular values $\BF{\Sigma}=
		\text{diag}(\sigma_1,\sigma_2,\ldots,\sigma_K)$
		and corresponding singular vectors 
		$\BF{U} \in\BB{R}^{I_1I_2\cdots I_N\times K}$ and 
		$\BF{V} \in\BB{R}^{J_1J_2\cdots J_N\times K}$ in block-$N$ TT format 
		with TT-ranks $R^U_1,R^U_2,\ldots,R^U_{N-1}$ for $\BF{U}$
		and $R^V_1,R^V_2,\ldots,R^V_{N-1}$ for $\BF{V}$.}
  Initialize $\BF{U}$ and $\BF{V}$ 
		in block-$N$ TT format with 
		left-orthogonalized TT-cores $\ten{U}^{(1)},\ldots,
		\ten{U}^{(N-2)},\ten{V}^{(1)},\ldots,\ten{V}^{(N-2)}$ and 
		small TT-ranks $R_1^U,\ldots,R_{N-1}^U,R_1^V,\ldots,R_{N-1}^V$. 
\\
  Compute the 3rd order tensors $\ten{L}^{<1},\ldots,\ten{L}^{<N-1}$ 
  recursively by \eqref{def:psileft0} and \eqref{def:psileft}. Set $\ten{R}^{>N}=1$.
\\
\Repeat{a stopping criterion is met (See Section \ref{sec:stopping})}{
  \For(right-to-left half sweep){$n=N,N-1,\ldots,2$}{
	\tcp{Optimization}
	Compute $\BF{U}^{(n-1,n)}$ and $\BF{V}^{(n-1,n)}$ by solving 
	\eqref{max_mals}. 
\\
	Update the singular values $\BF{\Sigma}=(\BF{U}^{(n-1,n)})^\RM{T} 
	\overline{\BF{A}}_{n-1,n} \BF{V}^{(n-1,n)}$. 
\\
	\tcp{Matrix Factorization and Adaptive Rank Estimation}
		Reshape $\ten{U}^{(n-1,n)} = reshape(\BF{U}^{(n-1,n)}, 
		[R^U_{n-2},  I_{n-1}, I_{n}, R^U_{n}, K])$,  
		$\ten{V}^{(n-1,n)} = reshape(\BF{V}^{(n-1,n)}, 
		[R^V_{n-2}, J_{n-1}, J_{n}, R^V_{n}, K])$. 
\\
		Compute $\delta$-truncated SVD: 
			\begin{equation}
			\begin{split}
			[\BF{U}_1, \BF{S}_1, \BF{V}_1] &= \RM{SVD}_\delta
		  	\left( \BF{U}^{(n-1,n)}_{(\{1,2,5\}\times\{3,4\})} \right), 
			\\
			[\BF{U}_2, \BF{S}_2, \BF{V}_2] &= \RM{SVD}_\delta
		  	\left( \BF{V}^{(n-1,n)}_{(\{1,2,5\}\times\{3,4\})} \right). 
			\end{split}
			\end{equation}
\\
		Update TT-ranks $R^U_{n-1}=rank(\BF{V}_1)$,
		$R^V_{n-1}=rank(\BF{V}_2)$.
\\
		Update TT-cores 
			\begin{equation}
			\begin{split}
			\ten{U}^{(n)} &=reshape(\BF{V}_1^\text{T}, [R^U_{n-1},I_{n},R^U_n]), 
			\\
			\ten{V}^{(n)} &=reshape(\BF{V}_2^\text{T}, [R^V_{n-1},J_{n},R^V_n]).
			\end{split}
			\end{equation}
\\
		Update TT-cores
			\begin{equation}
			\begin{split}
			\ten{U}^{(n-1)} &= reshape(\BF{U}_1\BF{S}_1, 
			[R_{n-2}^U,I_{n-1},K,R_{n-1}^U]), 
			\\
			\ten{V}^{(n-1)} &= reshape(\BF{U}_2\BF{S}_2, 
			[R_{n-2}^V,J_{n-1},K,R_{n-1}^V]).
			\end{split}
			\end{equation}
\\
		Compute the 3rd order tensor $\ten{R}^{>n-1}$ by \eqref{def:psiright}
  }
  \For{$n=1,2,\ldots,N-1$}{
		Carry out left-to-right half sweep similarly
  }
}
\end{algorithm}

\subsection{Efficient Computation of Projected Matrix-by-Vector Product}
\label{sec:3_3}

In order to solve the reduced optimization problems \eqref{max_als}
and \eqref{max_mals}, we consider the eigenvalue decomposition 
of the block matrices 
	\begin{equation}
	\overline{\BF{B}}_n = \begin{bmatrix}
	\BF{0} & \overline{\BF{A}}_n \\
	\overline{\BF{A}}_n^\RM{T} & \BF{0}
	\end{bmatrix}, 
	\qquad 
	\overline{\BF{B}}_{n-1,n} = \begin{bmatrix}
	\BF{0} & \overline{\BF{A}}_{n-1,n} \\
	\overline{\BF{A}}_{n-1,n}^\RM{T} & \BF{0}
	\end{bmatrix}. 
	\end{equation}
It can be shown that the $K$ largest eigenvalues of 
$\overline{\BF{B}}_n$ correspond to the $K$ dominant singular 
values of the projected matrix $\overline{\BF{A}}_n$, and 
the eigenvectors of $\overline{\BF{B}}_n$ correspond to 
a concatenation of the left and right singular vectors of 
$\overline{\BF{A}}_n$. See Appendix \ref{sec:app1} for more detail. 
The same holds for $\overline{\BF{B}}_{n-1,n}$ and 
$\overline{\BF{A}}_{n-1,n}$. 

For computing the 
eigenvalue decomposition of the above matrices, 
we don't need to compute the matrices explicitly, 
but we only need to compute matrix-by-vector 
products. 
Let 
	\begin{equation}
	\BF{x}
	\in\BB{R}^{R^U_{n-1}I_nR^U_n}, 
	\quad 
	\BF{y}
	\in\BB{R}^{R^V_{n-1}J_nR^V_n}, 
	\quad
	\tilde{\BF{x}}
	\in\BB{R}^{R^U_{n-2}I_{n-1}I_nR^U_n}, 
	\quad
	\tilde{\BF{y}}
	\in\BB{R}^{R^V_{n-2}J_{n-1}J_nR^V_n}
	\end{equation}
be given vectors. Then, the matrix-by-vector products are expressed by 
	\begin{equation}
	\overline{\BF{B}}_n \begin{bmatrix} \BF{x}\\\BF{y}  \end{bmatrix}
	= \begin{bmatrix} \overline{\BF{A}}_n\BF{y}\\
	   \overline{\BF{A}}_n^\RM{T}\BF{x}  \end{bmatrix}, 
	\qquad 
	\overline{\BF{B}}_{n-1,n} \begin{bmatrix} \tilde{\BF{x}}\\\tilde{\BF{y}}  \end{bmatrix}
	= \begin{bmatrix} \overline{\BF{A}}_{n-1,n}\tilde{\BF{y}}\\
	   \overline{\BF{A}}_{n-1,n}^\RM{T}\tilde{\BF{x}}  \end{bmatrix},  
	\end{equation}
which consist of the projected matrix-by-vector products, 
$\overline{\BF{A}}_n\BF{y}, \overline{\BF{A}}_n^\RM{T}\BF{x}, 
\overline{\BF{A}}_{n-1,n}\tilde{\BF{y}}, $ and 
$\overline{\BF{A}}_{n-1,n}^\RM{T}\tilde{\BF{x}}$. 

The computation of the projected matrix-by-vector products 
is performed in an iterative way as follows. 
Let $\BF{U}^{(m)}_{i_m} = \ten{U}^{(m)}(:,i_m,:)$, 
$\BF{A}^{(m)}_{i_m,j_m} = \ten{A}^{(m)}(:,i_m,j_m,:)$, 
and $\BF{V}^{(m)}_{j_m} = \ten{V}^{(m)}(:,j_m,:)$ be 
the slice matrices of the three $m$th core tensors for $m\neq n$.
Let 
	\begin{equation}\label{eqn:z_block_rec}
	\BF{Z}^{(m)} = \sum_{i_m=1}^{I_m} \sum_{j_m=1}^{J_m} 
	\BF{U}^{(m)}_{i_m} \otimes 
	\BF{A}^{(m)}_{i_m,j_m} \otimes \BF{V}^{(m)}_{j_m}
	\in\BB{R}^{R^U_{m-1}R^A_{m-1}R^V_{m-1} \times R^U_{m}R^A_{m}R^V_{m}}. 
	\end{equation}
We define 3rd order tensors 
$\ten{L}^{<m} \in\BB{R}^{R^U_{m-1}\times R^A_{m-1} \times R^V_{m-1}}$,  
$m=1,2,\ldots,n,$ and 
$\ten{R}^{>m} \in\BB{R}^{R^U_{m}\times R^A_{m}\times R^V_{m}}$, 
$m=n,n+1,\ldots,N,$ recursively by 
	\begin{equation}\label{def:psileft0}
	\text{vec}\left(\ten{L}^{<1}\right) = 1, 
	\end{equation}
	\begin{equation}\label{def:psileft}
	\text{vec}\left(\ten{L}^{<m}\right)^\RM{T} = 
	\text{vec}\left(\ten{L}^{<m-1}\right)^\RM{T} \BF{Z}^{(m-1)}
	\in\BB{R}^{1\times R^U_{m-1}R^A_{m-1}R^V_{m-1}}, 
	\qquad m=2,3,\ldots,n,
	\end{equation}
and 
	\begin{equation}
	\text{vec}\left(\ten{R}^{>N}\right) = 1, 
	\end{equation}
	\begin{equation}\label{def:psiright}
	\text{vec}\left(\ten{R}^{>m}\right) = 
	\BF{Z}^{(m+1)}\text{vec}\left(\ten{R}^{>m+1}\right)
	\in\BB{R}^{R^U_{m}R^A_{m}R^V_{m}\times 1}, 
	\qquad m=n,n+1,\ldots,N-1. 
	\end{equation}
Recall that $(\BF{U}^{(n)})^\RM{T}\overline{\BF{A}}_n\BF{V}^{(n)}
=\BF{U}^\RM{T}\BF{AV}$. 
Let $\BF{u}^{(n)}_{k_1}$ and $\BF{v}^{(n)}_{k_2}$
denote the $k_1$th and $k_2$th column vectors of 
the matrices $\BF{U}^{(n)}$ and $\BF{V}^{(n)}$. 
From the matrix product representations of the 
matrix TT and block TT formats \eqref{eqn:matprod_matrixTT}
and \eqref{eqn:matprod_blockTT}, we can show that 
the $(k_1,k_2)$ entry of 
$(\BF{U}^{(n)})^\RM{T}\overline{\BF{A}}_n\BF{V}^{(n)}$ is 
expressed by 
	\begin{equation}
	\begin{split}
	(\BF{u}^{(n)}_{k_1})^\RM{T}
	\overline{\BF{A}}_n\BF{v}^{(n)}_{k_2}
	& = 
	\BF{u}_{k_1}^\RM{T}\BF{Av}_{k_2} \\
	& = \BF{Z}^{(1)}\cdots \BF{Z}^{(n-1)}
	\left(\sum_{i_n=1}^{I_n}\sum_{j_n=1}^{J_n}
	\BF{U}^{(n)}_{k_1,i_n}
	\otimes \BF{A}^{(n)}_{i_n,j_n} 
	\otimes \BF{V}^{(n)}_{k_2,j_n} \right)
	\BF{Z}^{(n+1)}\cdots \BF{Z}^{(N)} 
	\\
	& = 
	\text{vec}\left( \ten{L}^{<n} \right)^\RM{T}
	\left(\sum_{i_n=1}^{I_n}\sum_{j_n=1}^{J_n}
	\BF{U}^{(n)}_{k_1,i_n}
	\otimes \BF{A}^{(n)}_{i_n,j_n} 
	\otimes \BF{V}^{(n)}_{k_2,j_n} \right)
	\text{vec}\left( \ten{R}^{>n} \right), 
	\end{split}
	\end{equation}
which is the contraction of the tensors 
$\ten{L}^{<n}, \ten{U}^{(n)}, \ten{A}^{(n)}, 
\ten{V}^{(n)},$ and $\ten{R}^{>n}$. 
Thus, the computation of 
$\overline{\BF{A}}_n\BF{y}$ 
is performed by the contraction
of the tensors $\ten{L}^{<n}$, $\ten{Y}$, $\ten{A}^{(n)}$, 
and $\ten{R}^{>n}$, where 
$\ten{Y}\in\BB{R}^{R^V_{n-1}\times J_n\times R^V_n}$
is the tensorization of the vector $\BF{y}$. 
In the same way, the computation of 
$\overline{\BF{A}}_n^\RM{T}\BF{x}$ 
is performed by the contraction
of the tensors $\ten{L}^{<n}$, $\ten{X}$, $\ten{A}^{(n)}$, 
and $\ten{R}^{>n}$, where 
$\ten{X}\in\BB{R}^{R^U_{n-1}\times I_n\times R^U_n}$
is the tensorization of the vector $\BF{x}$.

Similarly, we can derive 
an expression for the projected matrix $\overline{\BF{A}}_{n-1,n}$ as
	\begin{equation}
	\begin{split}
	&(\BF{u}^{(n-1,n)}_{k_1})^\RM{T}
	\overline{\BF{A}}_{n-1,n}\BF{v}^{(n-1,n)}_{k_2}
	\\
	&= \text{vec}\left( \ten{L}^{<n-1} \right)^\RM{T}
	\left(\sum_{i_{n-1}}\sum_{i_n}\sum_{j_{n-1}}\sum_{j_n}
	\BF{U}^{(n-1,n)}_{i_{n-1},k_1,i_n}
	\otimes 
	\left(\BF{A}^{(n-1)}_{i_{n-1},j_{n-1}} 
	\BF{A}^{(n)}_{i_n,j_n} 
	\right)
	\otimes \BF{V}^{(n-1,n)}_{j_{n-1},k_2,j_n} \right)
	\text{vec}\left( \ten{R}^{>n} \right), 
	\end{split}
	\end{equation}
which is the contraction of the tensors 
$\ten{L}^{<n-1}$, $\ten{U}^{(n-1,n)}$, $\ten{A}^{(n-1)}$, 
$\ten{A}^{(n)}$, $\ten{V}^{(n-1,n)}$, and 
$\ten{R}^{>n}$. Thus, the computation of 
$\overline{\BF{A}}_{n-1,n}\tilde{\BF{y}}$ 
is performed by the contraction
of the tensors $\ten{L}^{<n-1}$, $\widetilde{\ten{Y}}$, 
$\ten{A}^{(n-1)}$, $\ten{A}^{(n)}$, 
and $\ten{R}^{>n}$, where 
$\widetilde{\ten{Y}}\in\BB{R}^{R^V_{n-2}\times J_{n-1}\times J_n
\times R^V_n}$
is the tensorization of the vector $\tilde{\BF{y}}$. 
In the same way, the computation of 
$\overline{\BF{A}}_n^\RM{T}\tilde{\BF{x}}$ 
is performed by the contraction
of the tensors $\ten{L}^{<n-1}$, $\widetilde{\ten{X}}$, 
$\ten{A}^{(n-1)}$, $\ten{A}^{(n)}$, 
and $\ten{R}^{>n}$, where 
$\widetilde{\ten{X}}\in\BB{R}^{R^U_{n-2}\times I_{n-1}\times I_n
\times R^U_n}$
is the tensorization of the vector $\tilde{\BF{x}}$. 

Figure \ref{Fig:ProjMatProd} illustrates the tensor 
network diagrams for the computation of
the projected matrix-by-vector products
$\overline{\BF{A}}_n\BF{y}$ and $\overline{\BF{A}}_{n-1,n}\tilde{\BF{y}}$
for the vectors $\BF{y}\in\BB{R}^{R^V_{n-1}J_nR^V_n}$
and $\tilde{\BF{y}}\in\BB{R}^{R^V_{n-2}J_{n-1}J_nR^V_n}$. 
Based on the tensor network diagrams, 
we can easily specify the sizes of the tensors and 
how the tensors are contracted with each other. 

\begin{figure}
\centering
\begin{tabular}{c}
\includegraphics[width=8cm]{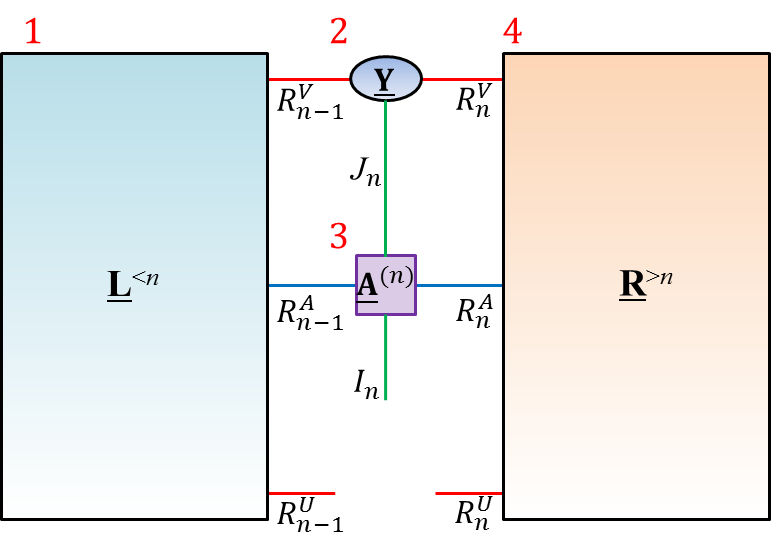}  \\
(a) $\overline{\BF{A}}_n\BF{y}$ for ALS-SVD \\
\includegraphics[width=8cm]{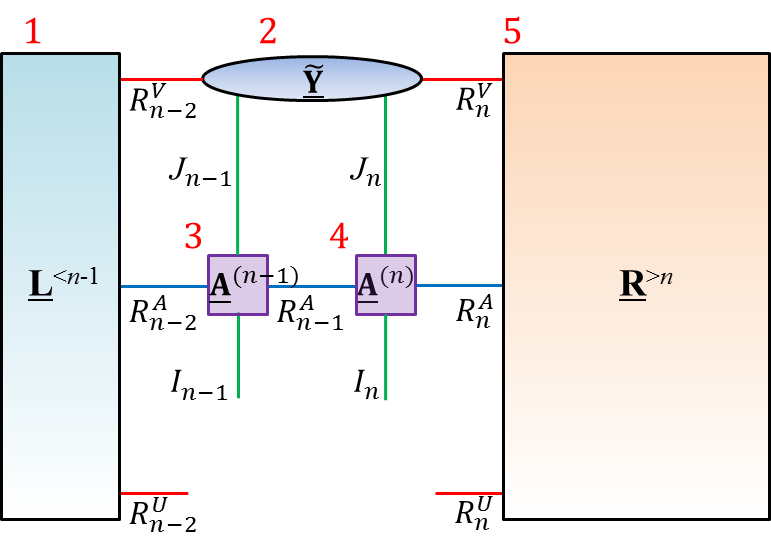}  \\
(b) $\overline{\BF{A}}_{n-1,n}\tilde{\BF{y}}$ for MALS-SVD
\end{tabular}
\caption{\label{Fig:ProjMatProd}
Computation of the projected matrix-by-vector products
(a) $\overline{\BF{A}}_n\BF{y}$ and 
(b) $\overline{\BF{A}}_{n-1,n}\tilde{\BF{y}}$
for solving the reduced optimization problems in 
the ALS-SVD and MALS-SVD algorithms. 
(a) The computation of $\overline{\BF{A}}_n\BF{y}$
is carried out by the contraction of 
$\ten{L}^{<n}$, $\ten{Y}$, $\ten{A}^{(n)}$, and 
$\ten{R}^{>n}$.
(b) The computation of $\overline{\BF{A}}_{n-1,n}\tilde{\BF{y}}$
is carried out by the contraction of 
$\ten{L}^{<n-1}$, $\widetilde{\ten{Y}}$, $\ten{A}^{(n-1)}$, 
$\ten{A}^{(n)}$, and 
$\ten{R}^{>n}$.
An efficient order of contraction is expressed by 
the numbers on each tensor. 
}
\end{figure}

\subsection{Computational Complexity}

Let $R=max(\{R^U_n\},\{R^V_n\})$, $R_A=max(\{R^A_n\})$, 
and $I=max(\{I_n\},\{J_n\})$. The computational complexities 
for the ALS-SVD and MALS-SVD algorithms are summarized 
in Table \ref{table1:complexity}. We may assume that $K\geq I$
because we usually choose very small values for $I_n$ and $J_n$, 
e.g., $I=I_n=J_n=2$. 
The computational complexities in Table \ref{table1:complexity}
correspond to each iteration, so the total computational costs 
for one full sweep (right-to-left and left-to-right half sweeps) 
grow linearly with the order $N$ given that 
$R,R_A,I,$ and $K$ are bounded.

\begin{table}
\caption{\label{table1:complexity}
Computational complexities of the ALS-SVD and 
MALS-SVD algorithms for one iteration
}
\vspace{0.5pc}
\centering
\begin{tabular}{cll}
\hline
 & \multicolumn{1}{c}{ALS-SVD} & 
	\multicolumn{1}{c}{MALS-SVD} \\
\hline
Orthonormalization & line 5: $\CL{O}(K^2IR^2)$ & line 5: $\CL{O}(K^2I^2R^2)$ \\
Projected Matrix-by-
	& line 5: $\CL{O}(KIR_A(R+IR_A)R^2)$ 
	& line 5: $\CL{O}(KI^2R_A(R + IR_A)R^2)$ \\
  Vector Products  & $\qquad\quad $& \\
Factorization 
	& line 8: $\CL{O}(KI^2R^3)$ 
	& line 8: $\CL{O}(KI^3R^3)$ \\
Updating Tensor 
	& line 12: $\CL{O}(IR_A(R + IR_A)R^2)$ 
	& line 14: $\CL{O}(IR_A(R + IR_A)R^2)$ \\
$\ten{L}^{<n}$ or $\ten{R}^{>n}$ 	&& \\
\hline
\end{tabular}
\end{table}

At the optimization step at line 5 of the ALS-SVD, 
standard SVD algorithms perform two important computations 
\cite{Liu2013}: the orthonormalization of a matrix 
$\BF{V}^{(n)}\in\BB{R}^{R^V_{n-1}J_nR^V_n\times K}$
via QR decomposition, and the projected matrix-by-vector 
products, $\overline{\BF{A}}_n \BF{V}^{(n)}$. 
The computational complexity for the QR decomposition is 
$\CL{O}(K^2IR^2)$. For the MALS-SVD, the size of 
$\BF{V}^{(n-1,n)}$ is $R^V_{n-2}J_{n-1}J_nR^V_n\times K$, 
which leads to the computational complexity $\CL{O}(K^2I^2R^2)$. 

The computational complexities for the projected matrix-by-vector 
products can be conveniently analyzed by using the tensor network 
diagrams in Figure \ref{Fig:ProjMatProd}. 
In Figure \ref{Fig:ProjMatProd}(a), an efficient order of contraction
for computing $\overline{\BF{A}}_n\BF{y}$ is 
$(\ten{L}^{<n}, \ten{Y}, \ten{A}^{(n)}, \ten{R}^{>n})$, and 
its computational complexity is 
$\CL{O}(IR_A(R+IR_A)R^2)$. 
Since the matrix-by-vector product is performed for $K$ column 
vectors of $\BF{V}^{(n)}$, 
the computational complexity is multiplied by $K$. 
On the other hand, if we compute the contractions in the 
order of $(\ten{L}^{<n}, \ten{A}^{(n)}, \ten{Y}, 
\ten{R}^{>n})$, however, the computational complexity 
increases to $\CL{O}(I^2R_A(R + R_A)R^2)$. 
On the other hand, an explicit computation of 
the matrix $\overline{\BF{A}}_n$ can be performed 
by the contraction of $\ten{L}^{<n}, \ten{A}^{(n)},$ and 
$\ten{R}^{>n}$, which costs $\CL{O}(I^2R_A(R^2 + R_A)R^2)$. 
Thus, it is recommended to avoid computing the 
projected matrices explicitly. 

In Figure \ref{Fig:ProjMatProd}(b), one of the most efficient 
orders of contractions for computing 
$\overline{\BF{A}}_{n-1,n}\tilde{\BF{y}}$
is $(\ten{L}^{<n-1}, \widetilde{\ten{Y}}, $ $\ten{A}^{(n-1)}, 
\ten{A}^{(n)}, \ten{R}^{>n})$, and the 
computational complexity amounts to 
$\CL{O}(I^2R_A(R + IR_A)R^2)$. 
However, if we follow the order of $(\ten{L}^{<n-1}, \ten{A}^{(n-1)}, 
\ten{A}^{(n)}, \widetilde{\ten{Y}}, \ten{R}^{>n})$ for 
contraction, it costs as highly as $\CL{O}(I^4R_A(R+R_A)R^2)$. 
Moreover, if we have to compute the projected matrix 
$\overline{\BF{A}}_{n-1,n}$ explicitly, its computational cost increases to 
$\CL{O}(I^4R_A(R^2 + R_A)R^2)$. 

For performing the factorization step of the ALS-SVD, 
the truncated SVD costs $\CL{O}(min(K^2IR^3, KI^2R^3))$. 
Since we assume that $K\geq I$, we choose 
$\CL{O}(KI^2R^3)$. For the MALS-SVD, the computational 
complexity is $\CL{O}(KI^3R^3)$. 

Both the ALS-SVD and MALS-SVD maintain 
the recursively defined {\it left} and {\it right} tensors 
$\ten{L}^{<m},m=1,2,\ldots,n,$ and $\ten{R}^{>m},m=n,n+1,\ldots,N,$
during iteration. At each iteration, the algorithms update the left or right 
tensors after the factorization step by the definitions \eqref{def:psileft}
or \eqref{def:psiright}. For example, during the right-to-left half sweep, 
the right tensor $\ten{R}^{>n-1}$ is computed by the equation 
\eqref{def:psiright}.  Figure \ref{Fig:righttensornetwork} shows the 
tensor network diagram for the computation of $\ten{R}^{>n-1}$ 
during the right-to-left half sweep. In the figure we can see that the 
mode corresponding to the size $K$ has already been shifted to the 
$(n-1)$th core tensor. From \eqref{def:psiright} and 
\eqref{eqn:z_block_rec}, we can see that the computation is performed 
by the contraction of the tensors 
$\ten{U}^{(n)}, \ten{A}^{(n)}, \ten{V}^{(n)},$ and $\ten{R}^{>n}$. 
An optimal order of contraction is 
$(\ten{R}^{>n}, \ten{V}^{(n)}, \ten{A}^{(n)}, \ten{U}^{(n)})$, 
and its computational complexity is $\CL{O}(IR_A(R + IR_A)R^2)$
for both the ALS-SVD and MALS-SVD.  
On the other hand, if we compute the contractions
in the order of $(\ten{R}^{>n}, \ten{A}^{(n)}, \ten{U}^{(n)}, \ten{V}^{(n)})$
or $(\ten{R}^{>n}, \ten{U}^{(n)}, \ten{V}^{(n)}, \ten{A}^{(n)})$, 
then the computational cost increases to 
$\CL{O}(I^2R_A(R + R_A)R^2)$.

\begin{figure}
\centering
\includegraphics[width=8.2cm]{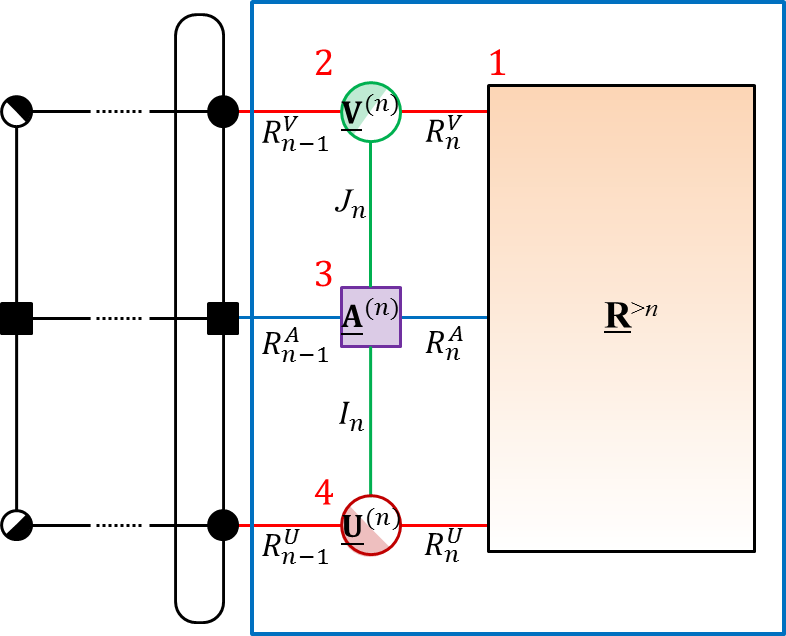}
\caption{\label{Fig:righttensornetwork}Iterative computation of 
the right tensor $\ten{R}^{>n-1}$ by the 
contraction of the tensors $\ten{R}^{>n}$, 
$\ten{V}^{(n)}$, $\ten{A}^{(n)}$, and $\ten{U}^{(n)}$
during the right-to-left half sweep. The optimal 
order of contraction is expressed 
by the numbers on each tensor. 
}
\end{figure}

\subsection{Computational Considerations}

\subsubsection{Initialization}

The computational costs of the ALS-SVD and MALS-SVD 
are significantly affected by the TT-ranks of the left and 
right singular vectors. Even if we have good initial guesses for 
the left and right singular vectors $\BF{U}$ and $\BF{V}$, 
it may take much computational time until convergence 
if their TT-ranks are large. Therefore, it is advisable to initialize 
$\BF{U}$ and $\BF{V}$ in block TT format with relatively small 
TT-ranks. The minimum values of the TT-ranks are determined
by 
	\begin{equation}
	R^U_n=\left\lceil K/(I_{n+1}\cdots I_N)\right\rceil,
	\quad 
	R^V_n=\left\lceil K/(J_{n+1}\cdots J_N)\right\rceil,
	\quad n=1,\ldots,N, 
	\end{equation}
because each TT-core is initially left-orthogonalized and 
satisfies, e.g., $R^U_{n-1}I_n\geq R^U_n, n=1,2,\ldots,N-1$, 
and the $N$th TT-core is also orthogonalized in the sense 
that $(\BF{U}^{(N)})^\RM{T}\BF{U}^{(N)}=\BF{I}_K$, 
and it must satisfy $R^U_{N-1}I_N\geq K$. Since both the 
ALS-SVD and MALS-SVD can adaptively determine TT-ranks 
during iteration process when $K\geq 2$, they update the 
singular vectors very fast for the first a few sweeps and 
usually make good initial updates for themselves. 

\subsubsection{Stopping Criterion}
\label{sec:stopping}

The ALS-SVD and MALS-SVD algorithms can terminate if 
the relative residual decreases below a prescribed 
tolerance parameter $\epsilon$: 
	\begin{equation}\label{eqn:rel_resid}
	r = \frac{ \|\BF{A}^\RM{T}\BF{U}-\BF{V\Sigma}\|_\RM{F} }
	{ \|\BF{\Sigma}\|_\RM{F} } < \epsilon.
	\end{equation}
The computational cost for computing $r$ is proportional 
to the order $N$ \cite{Ose2011}. However, since the 
product $\BF{A}^\RM{T}\BF{U}$ increases the TT-ranks to 
the multiplications, $\{R^A_nR^U_n\}$, we perform 
the truncation method, called TT-rounding, proposed in \cite{Ose2011}. 


\subsubsection{Truncation Parameter}

At each iteration, the $\delta$-truncated SVD is used to determine 
close to optimal TT-ranks and simultaneously to orthogonalize 
TT-cores. The $\delta$ value significantly affects the convergence 
rate of the ALS-SVD and MALS-SVD algorithms. If $\delta$ is small, 
then the algorithms usually converge fast within one or two full 
sweeps, but the TT-ranks may also grow largely, which causes high 
computational costs. In \cite{Holtz2011},
it was reported that a MALS-based algorithm often resulted in 
a rapidly increasing TT-ranks during iteration process. 
On the other hand, if $\delta$ is large, TT-ranks grow slowly, but 
the algorithms may not converge to the desired accuracy but converge 
to a local minimum. 

In this paper, we initially set the $\delta$ value to 
$\delta_0=\epsilon/\sqrt{N-1}$ in numerical simulations. 
In \cite{Ose2011}, it was shown that the $\delta_0$ 
yields a guaranteed accuracy in approximation and truncation 
algorithms. In the numerical simulations, we observed that 
the TT-based algorithms including the proposed 
algorithms converged to the desired accuracy $\epsilon$
within 1 to 3 full sweeps in most of the cases. 

In the case that the algorithm could not converge to the 
desired tolerance $\epsilon$ after the $N_{sweep}$ number 
of full sweeps, we restart the algorithm with different random 
initializations. In this case, the $\delta$ value may not be changed or 
decreased to, e.g., $0.1\delta$. If $K$ is not small, the algorithms 
usually converge in 1 or 2 full sweeps, so a small $N_{sweep}$ value 
is often sufficient. But if $K$ is small, e.g., $K=2$, then the rank 
growth in each sweep is relatively slow and a more number of sweeps 
may be necessary. Moreover, in order to reduce the computational 
complexity, we used a rather large $\delta$ value at the first half sweep, 
for instance, $100\delta$. In this way, we can speed up the 
computation while not harming the convergence.


%
%

\section{Numerical simulations}

In numerical simulations, we computed the $K$ dominant singular values and 
correponding singular vectors 
of several different types of very large-scale matrices. 
We compared the following SVD methods including 
two standard methods, LOBPCG and SVDS. 

\begin{enumerate}
\item LOBPCG: The LOBPCG \cite{Kny2001} method 
can compute the $K$ largest or smallest eigenvalues 
of Hermitian matrices. We applied a MATLAB version of 
LOBPCG to the matrix 
$\begin{bmatrix}\BF{0}&\BF{A}\\\BF{A}^\RM{T}&\BF{0}\end{bmatrix}$
to compute its $K$ largest eigenvalues 
$\BF{\Lambda}=\BF{\Sigma}=\text{diag}(\sigma_1,\ldots,\sigma_K)$
and the corresponding eigenvectors $\BF{W}=2^{-1/2}
\begin{bmatrix}\BF{U}^\RM{T}&\BF{V}^\RM{T}\end{bmatrix}$. 

\item SVDS: The MATLAB function SVDS computes a few singular 
values and vectors of matrices by using the MATLAB function 
EIGS, which applies the Fortran package ARPACK \cite{arpack97}. 
We applied the function EIGS directly to the matrix 
$\begin{bmatrix} \BF{0} & \BF{A} \\ \BF{A}^\RM{T} & \BF{0}
\end{bmatrix}$, so that the matrix-by-vector products are performed 
more efficiently than in SVDS. 
We obtained its $2K$ eigenvalues/vectors of largest magnitudes, which are plus/minus 
largest singular values of $\BF{A}$: $\pm \sigma_1,\ldots,\pm \sigma_K$. And then 
we selected only the $K$ largest singular values among them.

\item ALS-SVD, MALS-SVD: The ALS-SVD and MALS-SVD 
algorithms are described in Algorithms \ref{Alg:ALS} and
\ref{Alg:MALS}. For the local optimization at each iteration, 
we applied the MATLAB function EIGS with the projected matrix-by-vector 
product described in Section \ref{sec:3_3}. We computed $2K$
eigenvalues/vectors of largest magnitudes, $\pm \sigma_1,\ldots,\pm \sigma_K$, 
at each local optimization as described in the SVDS method above. 

\item ALS-EIG, MALS-EIG: The ALS and MALS schemes are implemented 
for computing the $K$ largest eigenvalues of the Hermitian matrix 
$\BF{A}^\RM{T}\BF{A}$ by maximizing the block Rayleigh quotient 
\cite{Dol2013b}. We applied the MATLAB function EIGS for optimization 
at each iteration. After computing the $K$ eigenvalues 
of largest magnitudes $\BF{\Lambda}=\BF{\Sigma}^2$ and eigenvectors 
$\BF{V}$ of $\BF{A}^\RM{T}\BF{A}$,
the iteration stops when 
	\begin{equation}
	\|\BF{A}^\RM{T}\BF{AV\Sigma}^{\dagger} - 
	\BF{V\Sigma}\|^2 < \epsilon^2 \|\BF{\Sigma}\|^2 , 
	\end{equation}
where $\BF{\Sigma}^{\dagger}$ is the pseudo-inverse. 
The left singular vectors are 
computed by $\BF{U} = \BF{AV\Sigma}^{-1}$ if $\BF{\Sigma}$ is 
invertible. If $\BF{\Sigma}$ is not invertible, then eigenvalue 
decomposition of $\BF{AA}^\RM{T}$ is computed to obtain $\BF{U}$. 
In this way, the relative residual can be controlled below $\epsilon$ as 
	\begin{equation}
	\begin{split}
	\|\BF{A}^\RM{T}\BF{U}-\BF{V\Sigma}\|^2 
	&= \|\BF{A}^\RM{T}\BF{AV\Sigma}^{-1} 
	-\BF{V\Sigma}\|^2 \leq \epsilon^2 \|\BF{\Sigma}\|^2 . 
	\end{split}
	\end{equation}
\end{enumerate}

We note that the computation of $\BF{A}^\RM{T}\BF{A}$ and 
$\BF{U}=\BF{AV\Sigma}^{-1}$ is followed by the truncation algorithm 
of \cite{Ose2011} to reduce the TT-ranks. 
Especially, the computational cost for the truncation of 
$\BF{A}^\RM{T}\BF{A}$ is $\CL{O}(NIR_A^6)$, which is 
quite large compared to the computational costs of the ALS-SVD and 
MALS-SVD in Table \ref{table1:complexity}.
Moreover, for the two standard methods, LOBPCG and SVDS, 
the matrix $\BF{A}$ is in full matrix format, whereas for 
the block TT-based SVD methods, the matrix 
$\BF{A}$ is in matrix TT format. 
In the simulations, we stopped running the two standard methods 
when the size of the matrix grows larger than $2^{13}\times 2^{13}$, 
because not only the computational time is high, but also 
the storage cost exceed the desktop memory capacity. 

We implemented our code in MATLAB. The simulations were 
performed on a desktop computer with Intel Core i7 X 980 CPU
at 3.33 GHz and 24GB of memory running Windows 7 Professional
and MATLAB R2007b. In the simulations, we performed 30 repeated 
experiments independently and averaged the results. 


\subsection{Random Matrix with Prescribed Singular Values}
\label{sec:4-1random} 

In order to measure the accuracy of computed singular values, 
we built matrices of rank 25 in matrix TT format by 
	\begin{equation}\label{eqn:matrix05USV}
	\BF{A} = \BF{U}_0\BF{\Sigma}_0\BF{V}_0^\RM{T} \in\BB{R}^{2^N\times 2^N}, 
	\end{equation}
where $\BF{U}_0\in\BB{R}^{2^N\times 25}$ and 
$\BF{V}_0\in\BB{R}^{2^N\times 25}$ are left and right singular vectors 
in block-$N$ TT format, where each of TT-cores are generated by 
standard normal distribution and then orthogonalized to have orthonormal 
column vectors in $\BF{U}_0$ and $\BF{V}_0$. 
The singular values $\BF{\Sigma}_0=\text{diag}(
\sigma^0_1,\sigma^0_2,\ldots,\sigma^0_{25})$ 
are given by 
	\begin{equation}
	\sigma^0_k = \beta^{k-1},\ k=1,2,\ldots,25. 
	\end{equation}
The $\beta$ takes values from $\{0.2,0.3,0.4,0.5,0.6\}$.
Figure \ref{Fig:betaUSV} illustrates the 25 singular values 
for each $\beta$ value. The block TT-ranks of 
$\BF{U}_0$ and $\BF{V}_0$ are set at the fixed value 
$5$. 
The TT-cores of $\BF{A}$ in \eqref{eqn:matrix05USV} 
were calculated based on 
the TT-cores of $\BF{U}_0$ and $\BF{V}_0$ as 
	\begin{equation}
	\begin{split}
	\BF{A}^{(n)}_{:,i_n,j_n,:}
	&= \BF{U}^{(n)}_{:,i_n,:} \otimes 
	\BF{V}^{(n)}_{:,j_n,:}
	\in\BB{R}^{R^U_{n-1}R^V_{n-1}\times R^U_nR^V_n}, 
	\quad 
	n=1,2,\ldots,N-1, \\
	\BF{A}^{(N)}_{:,i_N,j_N,:}
	&= \sum_{k=1}^{25}
	\BF{U}^{(N)}_{:,k,i_N,:} \otimes 
	\BF{V}^{(N)}_{:,k,j_N,:}
	\in\BB{R}^{R^U_{N-1}R^V_{N-1}\times R^U_{N}R^V_{N}}, 
	\end{split}
	\end{equation}
for $	i_n=1,\ldots,I_n, j_n=1,\ldots,J_n.$
We set $\epsilon=10^{-8}$ and $K=10$. 
The relative error for the estimated singular values $\BF{\Sigma}$ is 
calculated by 
	\begin{equation}
	\frac { \|\BF{\Sigma} - \BF{\Sigma}_0\|_\RM{F} }
	{ \|\BF{\Sigma}_0\|_\RM{F} }. 
	\end{equation}

\begin{figure}
\centering
\includegraphics[width=7.5cm]{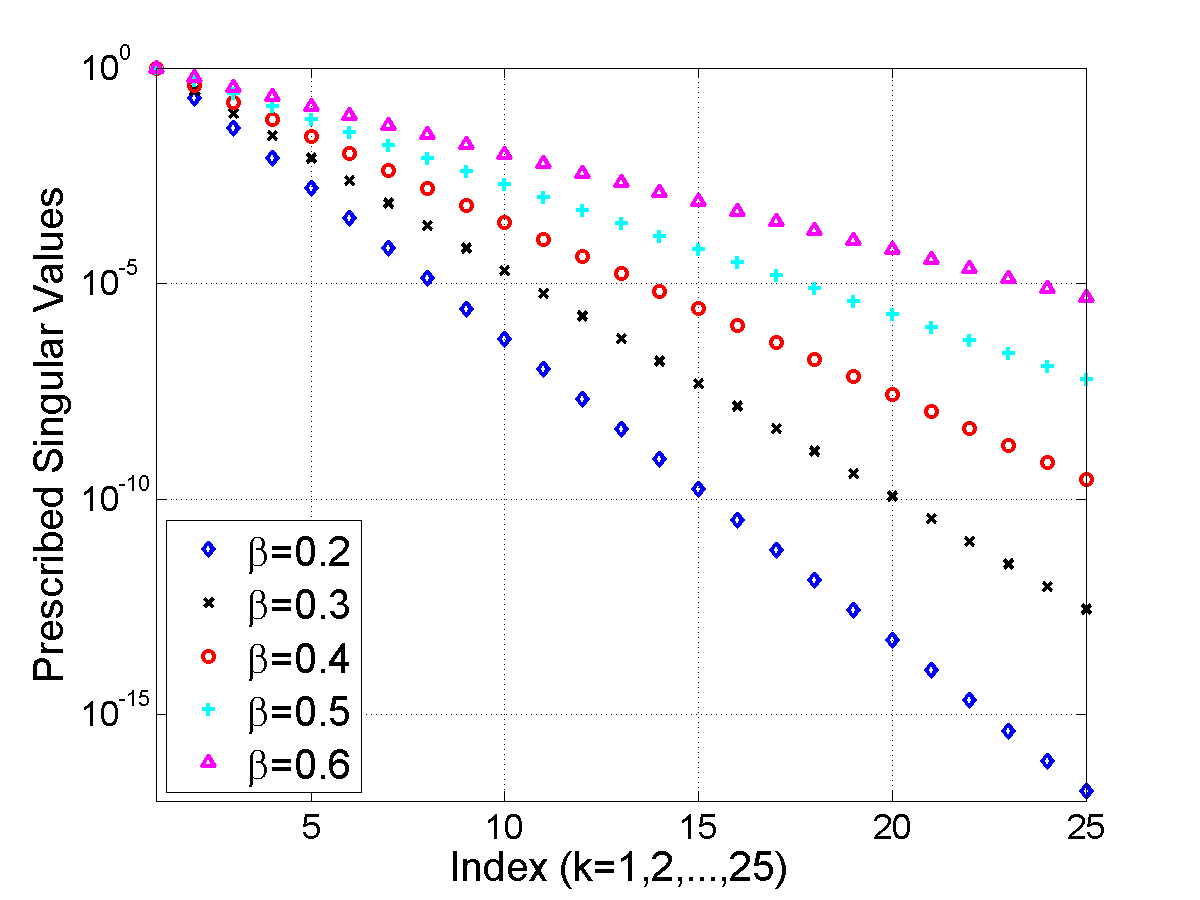}
\caption{\label{Fig:betaUSV}Singular values of the random matrices
for Section \ref{sec:4-1random}}
\end{figure}

Figure \ref{Fig:USV-convergence} shows the convergence of the proposed 
ALS-SVD and MALS-SVD algorithms for different $K$ values, $K=2,10$,
fixed dimension $N=50$, and truncation parameter $\delta=10^{-8}/\sqrt{N-1}$. 
In Figures \ref{Fig:USV-convergence}(a) and (b), the value $K=2$ is 
relatively small. In this case, the maximum of the TT-ranks of the 
right singular vectors $\BF{U}$ increases slowly in Figure \ref{Fig:USV-convergence}(b). 
The convergence of the MALS-SVD is faster than the ALS-SVD 
in Figure \ref{Fig:USV-convergence}(a), because its TT-ranks increases
faster than the ALS-SVD. Note that the relatively fast convergence of the MALS-SVD 
for small $K$ values were explained in the inequality \eqref{eqn:boundTTrank}. 
On the other hand, Figures \ref{Fig:USV-convergence}(c) and (d) 
show the results when the $K$ value is relatively large, i.e., $K=10$. 
In Figure \ref{Fig:USV-convergence}(d), the TT-ranks of $\BF{U}$
increase fastly in a few initial iterations, and then decrease to the 
optimal TT-ranks a few iterations before the convergence. In this case, 
we can see that both the ALS-SVD and MALS-SVD converge fastly in 
Figure \ref{Fig:USV-convergence}(c). Note that the fast convergence
does not imply small computational costs because the large 
TT-ranks will slow the speed of computation at each iteration. 

\begin{figure}
\centering
\begin{tabular}{cc}
\includegraphics[width=7.5cm]{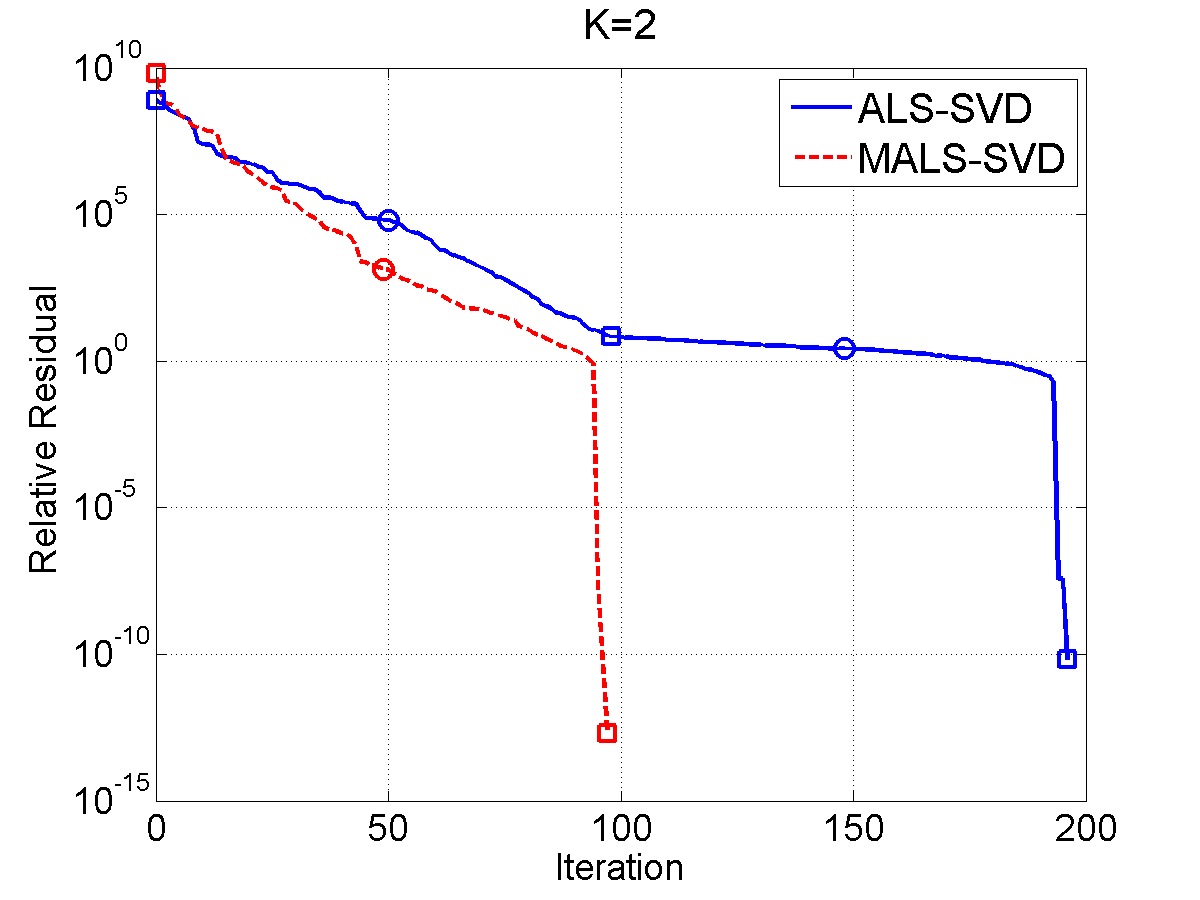} & 
\includegraphics[width=7.5cm]{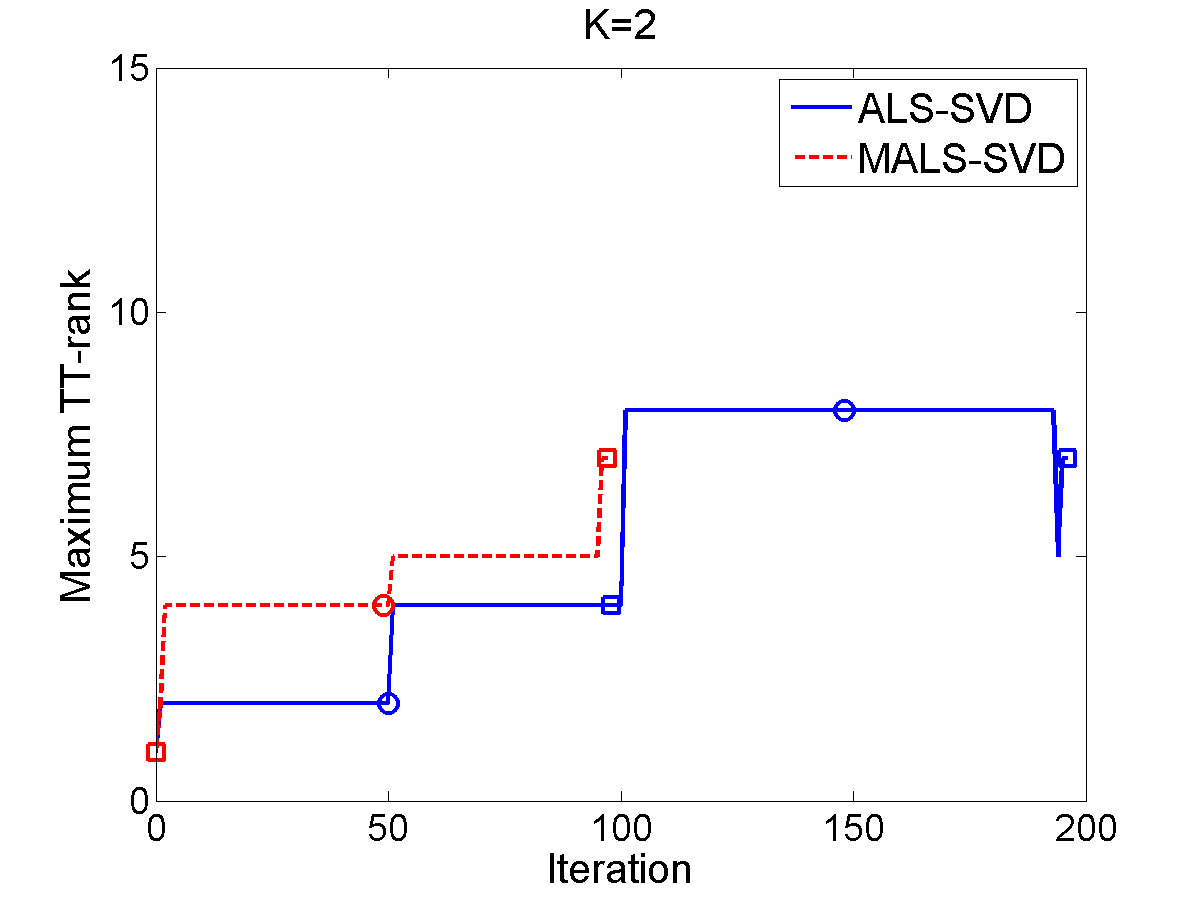} \\
(a) & (b) \\
\includegraphics[width=7.5cm]{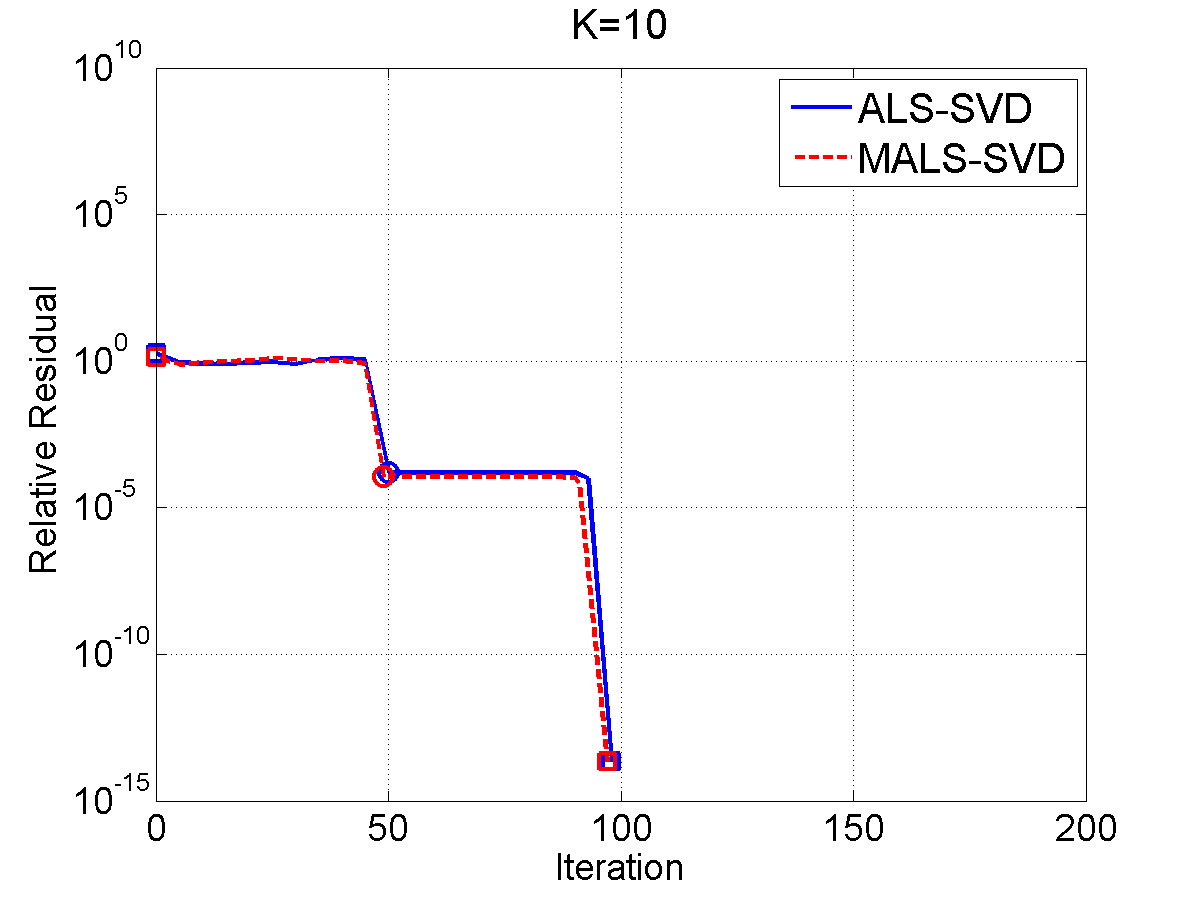} & 
\includegraphics[width=7.5cm]{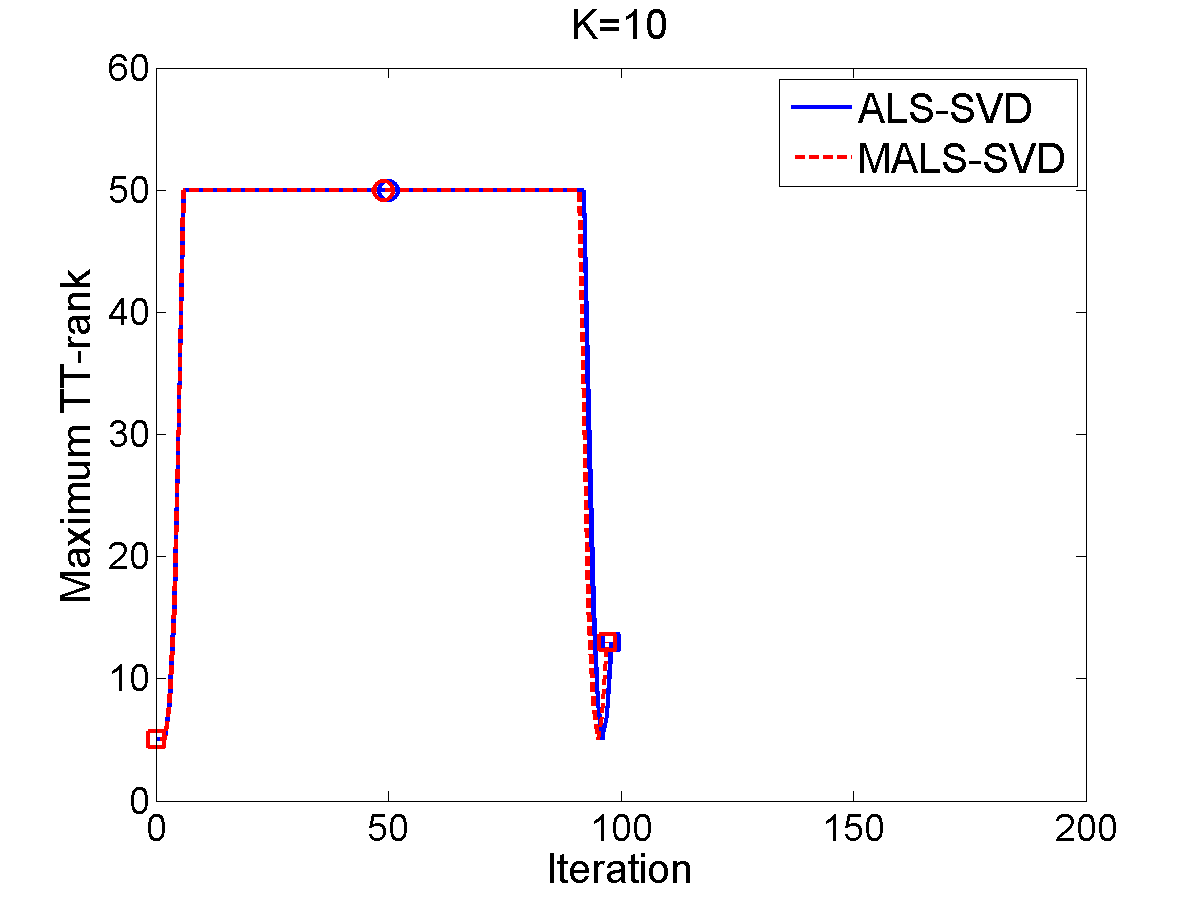} \\
(c) & (d) 
\end{tabular}
\caption{\label{Fig:USV-convergence}
Convergence of the ALS-SVD and MALS-SVD algorithms 
for the dimension $N=50$ for Section \ref{sec:4-1random}. 
Top panels (a) and (b) are the results for $K=2$ 
and the bottom panels (c) and (d) are the results for $K=10$. 
Circles represent the end of the right-to-left half sweep, and the 
squares represent each of the full-sweeps. }
\end{figure}

In Figure \ref{Fig:0503USV-time}, we can see that the computational costs
of the TT-based algorithms grow only logarithmically with the matrix size, 
whereas the times for the standard SVD algorithms grow exponentially with $N$.  
Among the TT-based algorithms, the ALS-SVD and MALS-SVD show the smallest 
computational costs. The ALS-EIG and MALS-EIG have higher computational costs
because the product $\BF{A}^\RM{T}\BF{A}$ increases its matrix TT-ranks and the 
subsequent truncation step results in high computational costs.
The LOBPCG and SVDS show a fast rate of increase in the computational cost. 
The LOBPCG and SVDS were stopped running for larger matrix sizes 
than $2^{13}\times 2^{13}$ because of the computational cost and 
desktop computer memory capacity. 
The black dotted line shows a predicted computational time for the LOBPCG. 

Moreover, the ALS-based methods are faster than the MALS-based methods 
because the MALS-based methods solve larger optimization problems at 
each iteration over the merged TT-cores. However, the MALS-based methods 
can determine TT-ranks during iteration even if $K=1$, and the rate of convergence
per iteration is faster for small $K$ values as shown in Figure \ref{Fig:USV-convergence}. 

\begin{figure}
\centering
\includegraphics[width=7.5cm]{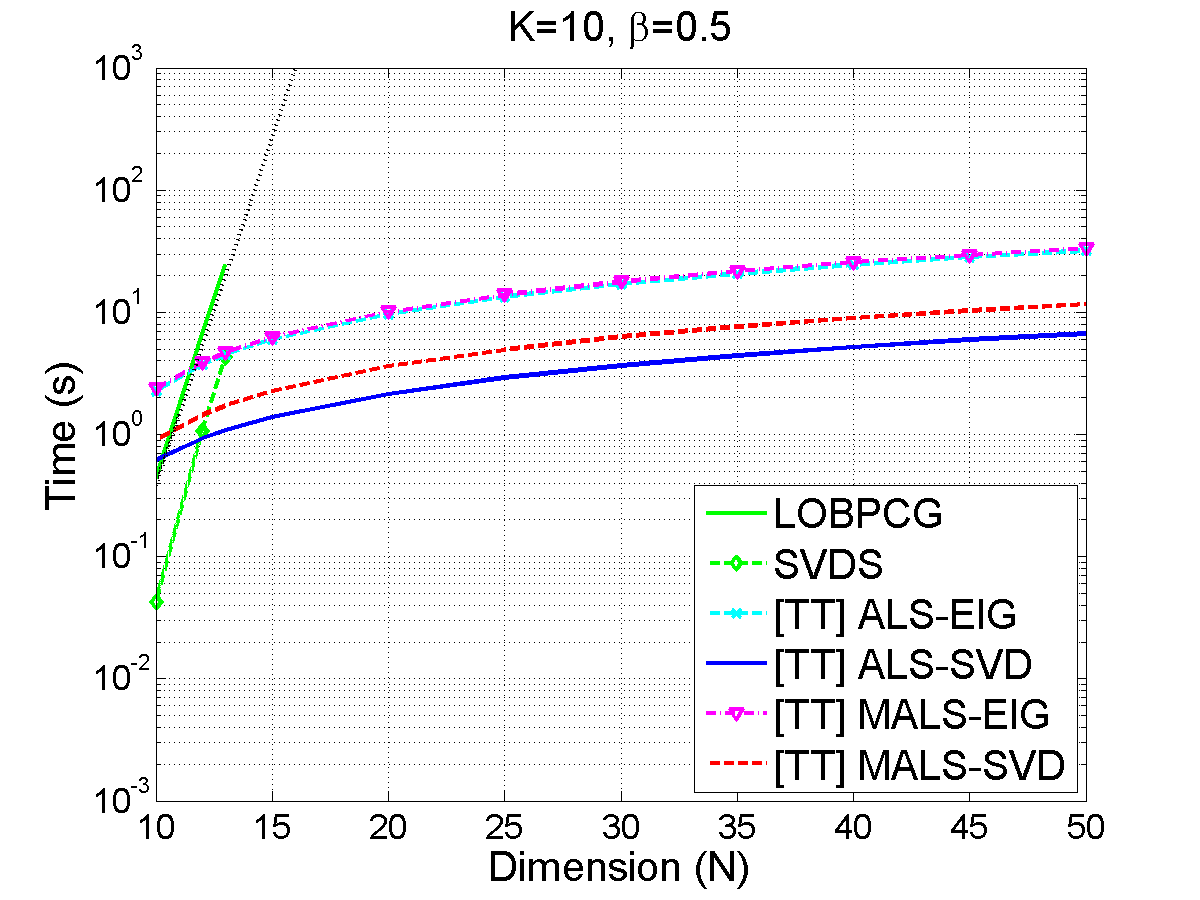}  
\caption{\label{Fig:0503USV-time}
Performances for $2^N\times 2^N$ random matrices with $10\leq N \leq 50$ 
and fixed $\beta=0.5$}
\end{figure}

Figure \ref{Fig:0504USV-errorsv} shows the performances of the 
four TT-based algorithms for various $\beta$ values 
for the random matrices $\BF{A}\in\BB{R}^{2^N\times 2^N}$ with $N=50$. 
In Figure \ref{Fig:0504USV-errorsv}(a), the ALS-SVD and MALS-SVD show the 
smallest computational times over all the $\beta$ values. 
In Figures \ref{Fig:0504USV-errorsv}(b) and (c), 
we can see that the ALS-SVD and MALS-SVD 
accurately estimate the block TT-ranks and 
the $K=10$ dominant singular values. 
On the other hand, the ALS-EIG and MALS-EIG 
estimate the block TT-ranks and the singular values slightly less accurately, 
especially for small $\beta$ values. 
We note that the ALS-EIG and MALS-EIG take square roots 
on the obtained eigenvalues to compute the singular values. 

\begin{figure}
\centering
\begin{tabular}{cc}
\includegraphics[width=7.5cm]{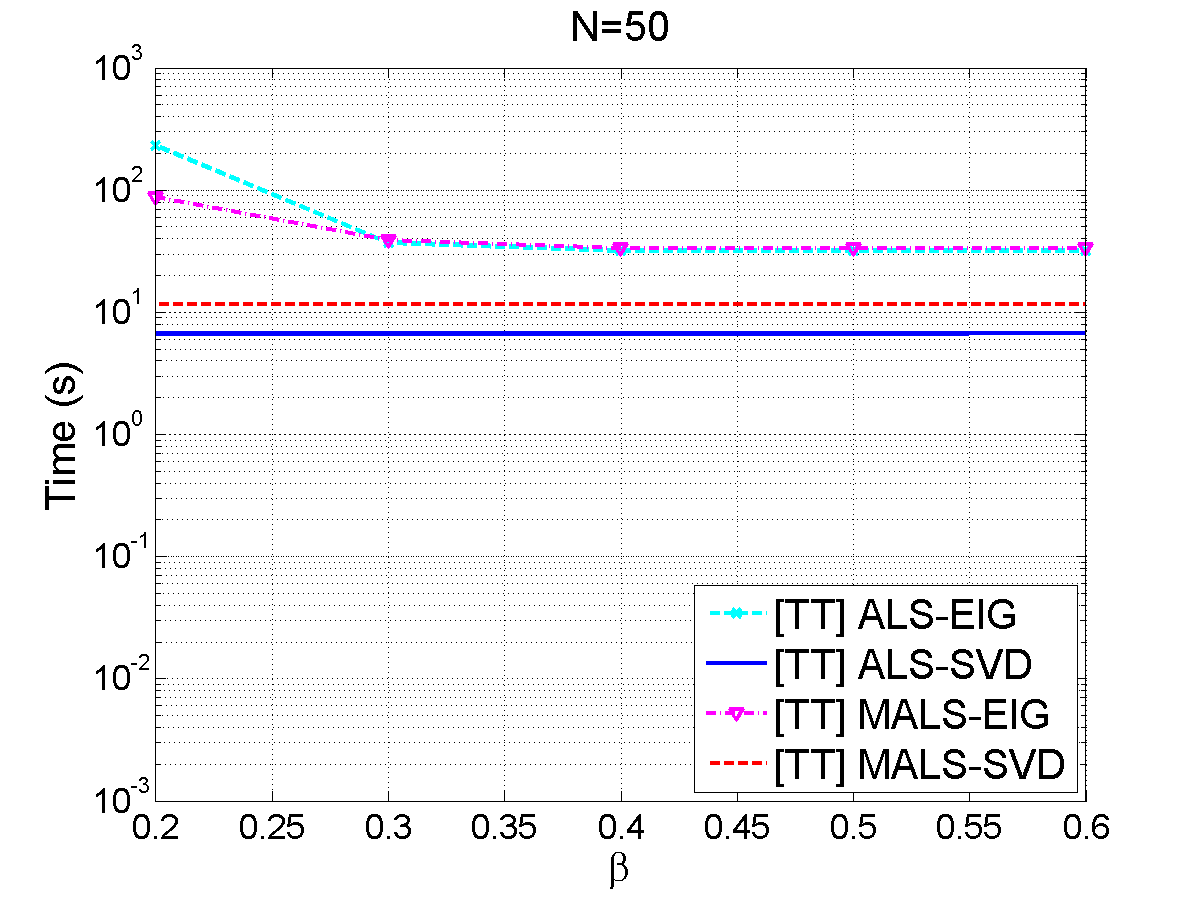}  & 
\includegraphics[width=7.5cm]{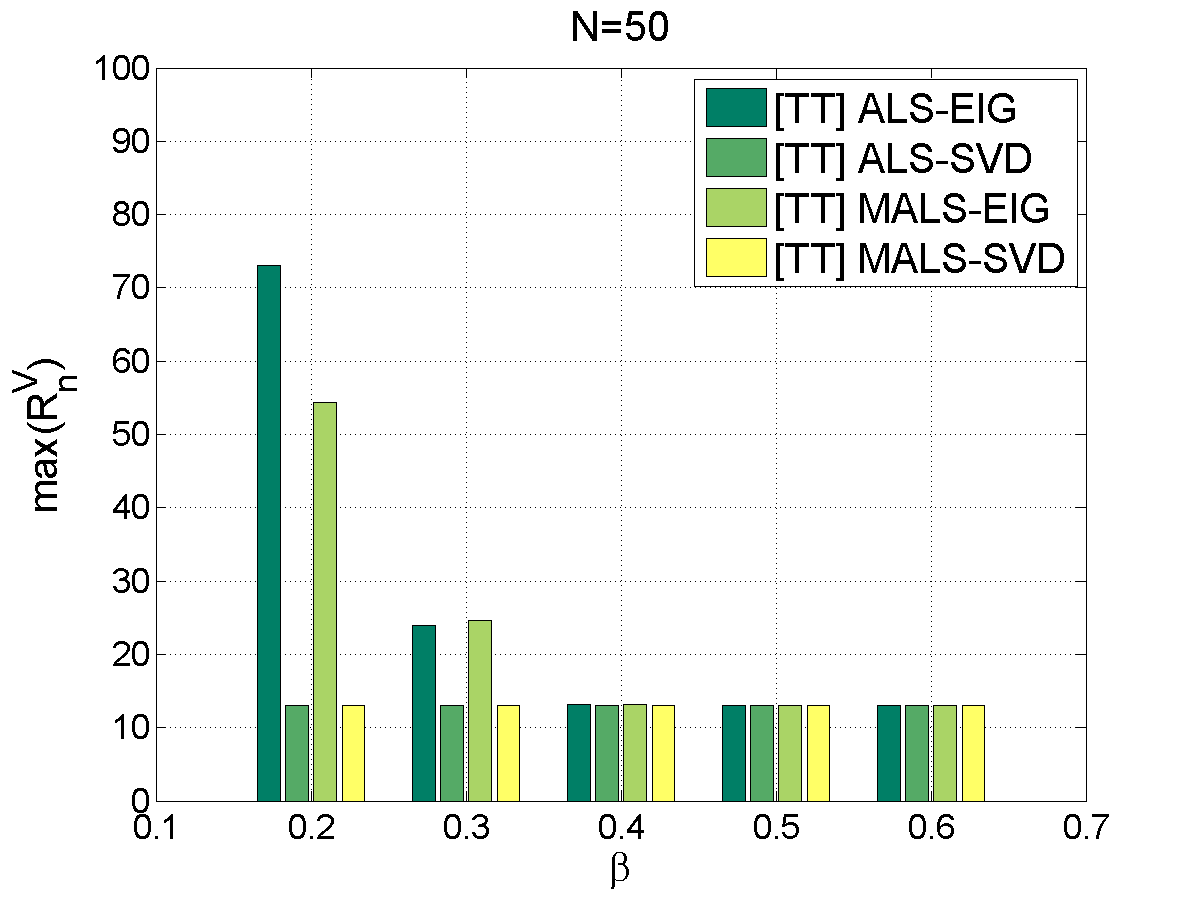}  \\
(a) & (b) \\
\multicolumn{2}{c}{
\includegraphics[width=7.5cm]{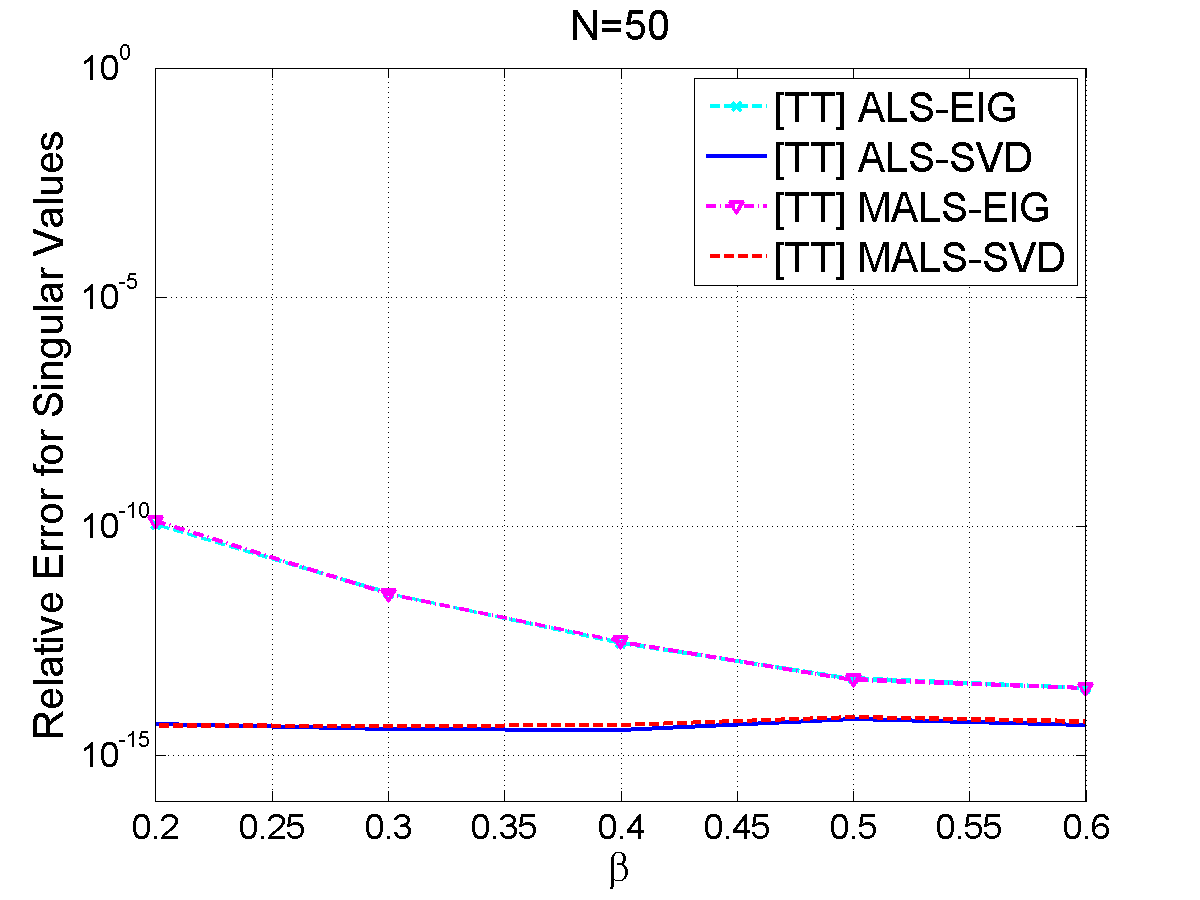}}\\
\multicolumn{2}{c}{(c)}
\end{tabular}
\caption{\label{Fig:0504USV-errorsv}
Performances for $2^N\times 2^N$ random matrices with prescribed singular values 
for $N=50$ and various $\beta$ values. (a) The computational time, 
(b) maximum block TT-rank of right singular vectors, and (c) relative error for singular values. 
 }
\end{figure}

\subsection{A Submatrix of Hilbert Matrix}
\label{sec:4-3hilbert} 

The Hilbert matrix $\BF{H}\in\BB{R}^{P\times P}$ is a symmetric matrix 
with entries $h_{i,j} = (i+j-1)^{-1},i,j=1,2,\ldots,P$. It is known that 
the eigenvalues of the Hilbert matrix decay to zero very fast. 
In this simulation, we consider a rectangular submatrix 
$\BF{A}\in\BB{R}^{2^N\times 2^{N-1}}$ of the Hilbert matrix defined by 
	\begin{equation}
	\BF{A} = \BF{H}(:,1:2^{N-1})
	\in\BB{R}^{2^N\times 2^{N-1}}
	\end{equation}
in MATLAB notation, in order to apply the SVD algorithms to 
the non-symmetric matrix $\BF{A}$. 
The matrix TT representation of $\BF{A}$ was computed  
based on the explicit TT representation of Hankel matrices, 
which is described in Appendix \ref{sec:app2} in detail.
For this purpose, we applied the cross approximation algorithm 
FUNCRS2 in TT-Toolbox \cite{Ose2011b}
to transform the vector 
$[1,1,2^{-1},3^{-1},\ldots,(2^{N+1}-1)^{-1}]^\RM{T}
\in\BB{R}^{2^{N+1}}$ 
to a vector TT format with the
relative approximation error of $10^{-8}$. 
Then we used the explicit TT representation of Hankel 
matrices to convert the vector into the Hilbert matrix in 
matrix TT format. Finally, we could find that the 
maximum value of the matrix TT-ranks, $\text{max}(R^A_n)$, 
are between 14 and 22 over $10\leq N\leq 50$.

For comparison of performances of the SVD algorithms, we 
set $\epsilon=10^{-3}$ and $K=10$.  
In Figure \ref{Fig:performance_hilbert}(a), 
the computational costs of the TT-based algorithms grow
logarithmically with the matrix size. The ALS-SVD and MALS-SVD 
have the least computational costs. 
In Figure \ref{Fig:performance_hilbert}(b),
we can see that the maximum block TT-ranks, $\text{max}(R^V_n)$, 
are relatively large for $N=30,40,50$, which may have 
affected the computational costs 
in Figure \ref{Fig:performance_hilbert}(a).

\begin{figure}
\centering
\begin{tabular}{cc}
\includegraphics[width=7.5cm]{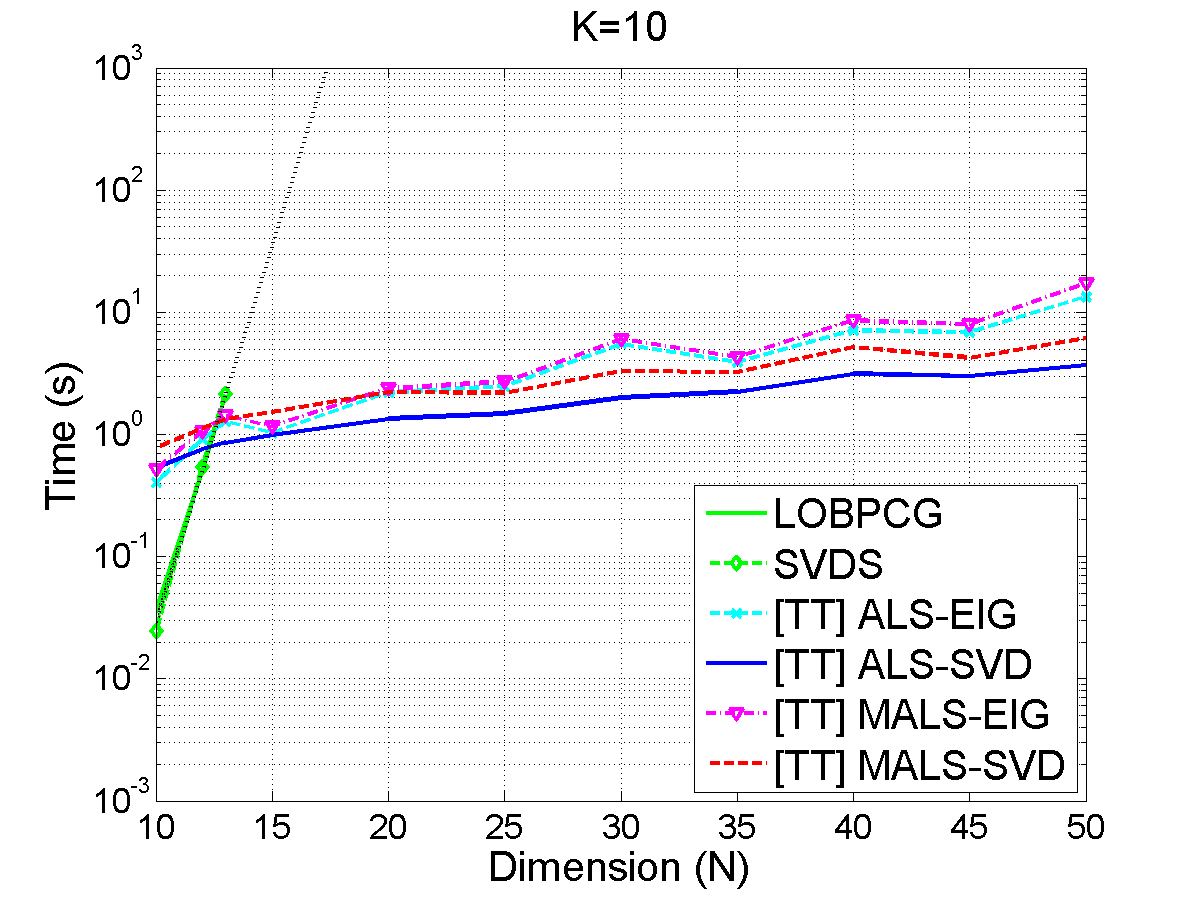} & 
\includegraphics[width=7.5cm]{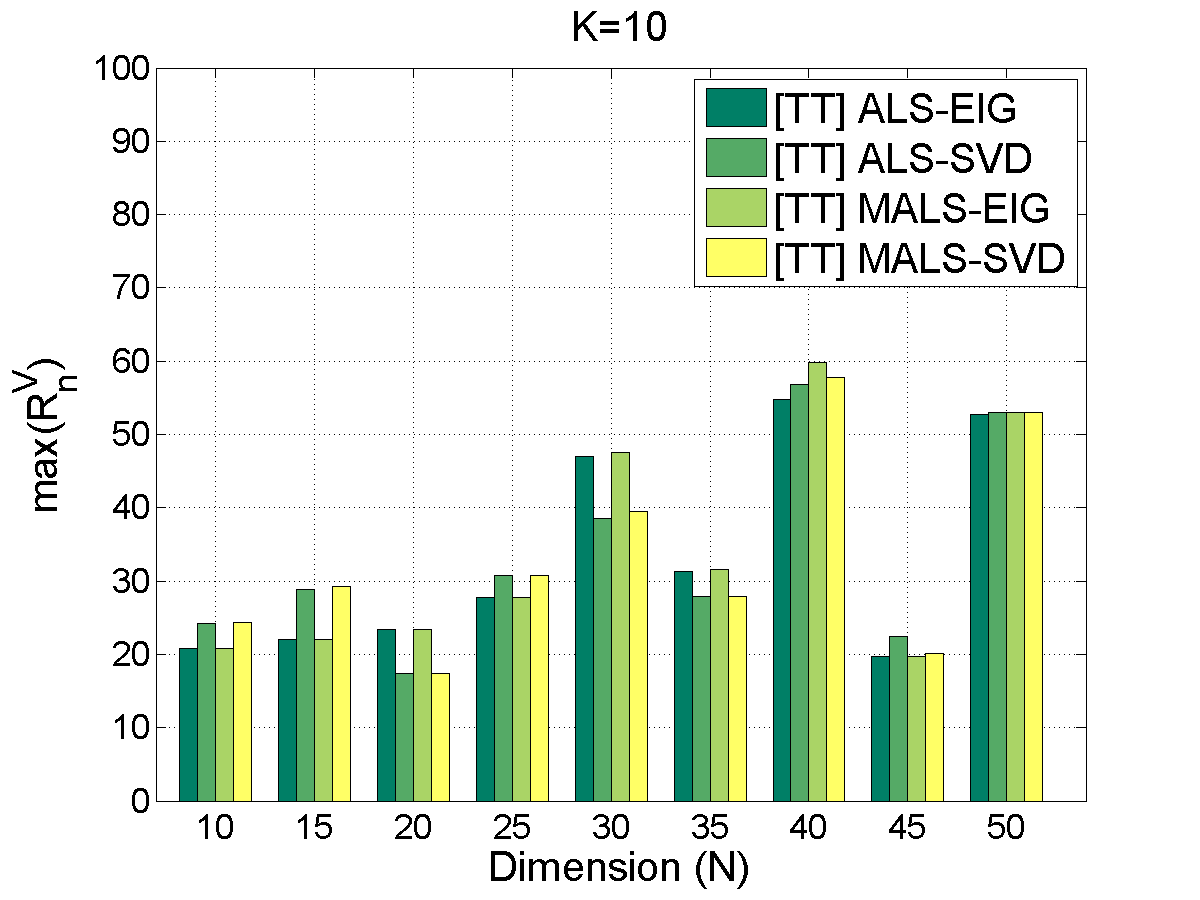} \\
(a)&(b) 
\end{tabular}
\caption{\label{Fig:performance_hilbert}
Performances for the $2^N\times 2^{N-1}$ submatrices of Hilbert matrices
with $10\leq N\leq 50$ and $K=10$.  
(a) Computational cost 
and (b) maximum block TT-rank of the right 
singular vectors. }
\end{figure}

\subsection{Random Tridiagonal Matrix}

A tridiagonal matrix is a banded matrix whose nonzero entries are 
only on the main diagonal, the first diagonal above the main diagonal, 
and the first diagonal below the main diagonal. 
The matrix TT representation of a tridiagonal matrix is 
described in Appendix \ref{sec:app3}. 
We randomly generated three vector TT tensors  
$\BF{a},\BF{b},\BF{c}$ with TT-cores 
drawn from the standard normal distribution 
and TT-ranks $R_n=R=5,1\leq n\leq N-1$. Then,
each TT-cores are orthogonalized to yield 
$\|\BF{a}\|=\|\BF{b}\|=\|\BF{c}\|=1$. 
We built the $2^N\times 2^N$ tridiagonal matrix $\BF{A}$
whose sub, main, and super diagonals are $\BF{a},\BF{b},$ and $\BF{c}$. 
The matrix TT-ranks of $\BF{A}$ are bounded by $5R$,
which are largely reduced after truncation to around 17. 

For performance evaluation, we set $\epsilon=10^{-8}$ and  
$K=10$. In this simulation, all the TT-based SVD algorithms converged 
within 3 full sweeps. 

Figure \ref{Fig:performance_tridiag}(a) shows that the 
computational costs for the TT-based SVD algorithms 
are growing logarithmically with the matrix size over $10\leq N\leq 50$. 
The ALS-SVD and MALS-SVD have the smallest computational costs. 
The ALS-EIG and MALS-EIG have relatively high computational costs 
because the matrix TT-ranks are relatively large in this case so the 
truncation of $\BF{A}^\RM{T}\BF{A}$ was computationally costly. 
Figure \ref{Fig:performance_tridiag}(b) shows that the maximum value
of the block TT-ranks, $\text{max}(R^V_n)$, are bounded by 20 
and slowly decreasing as $N$ increases. 
We note that the diagonal entries of the matrices were randomly 
generated and the 10 dominant singular values were close to each other, 
similarly as the identity matrix. 
We conclude that the TT-based SVD algorithms can accurately 
compute several dominant singular values and singular vectors
even in the case that the singular values are almost identical. 

\begin{figure}
\centering
\begin{tabular}{cc}
\includegraphics[width=7.5cm]{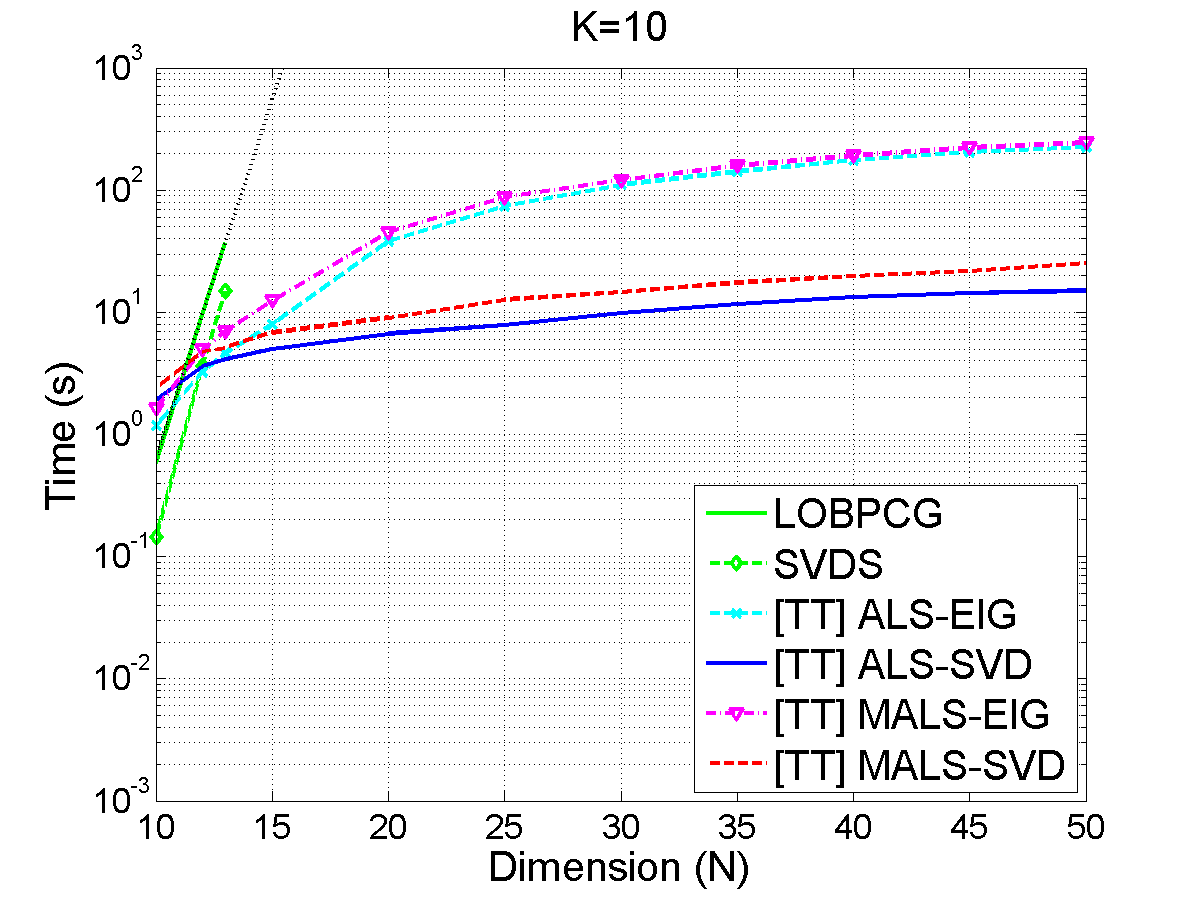} & 
\includegraphics[width=7.5cm]{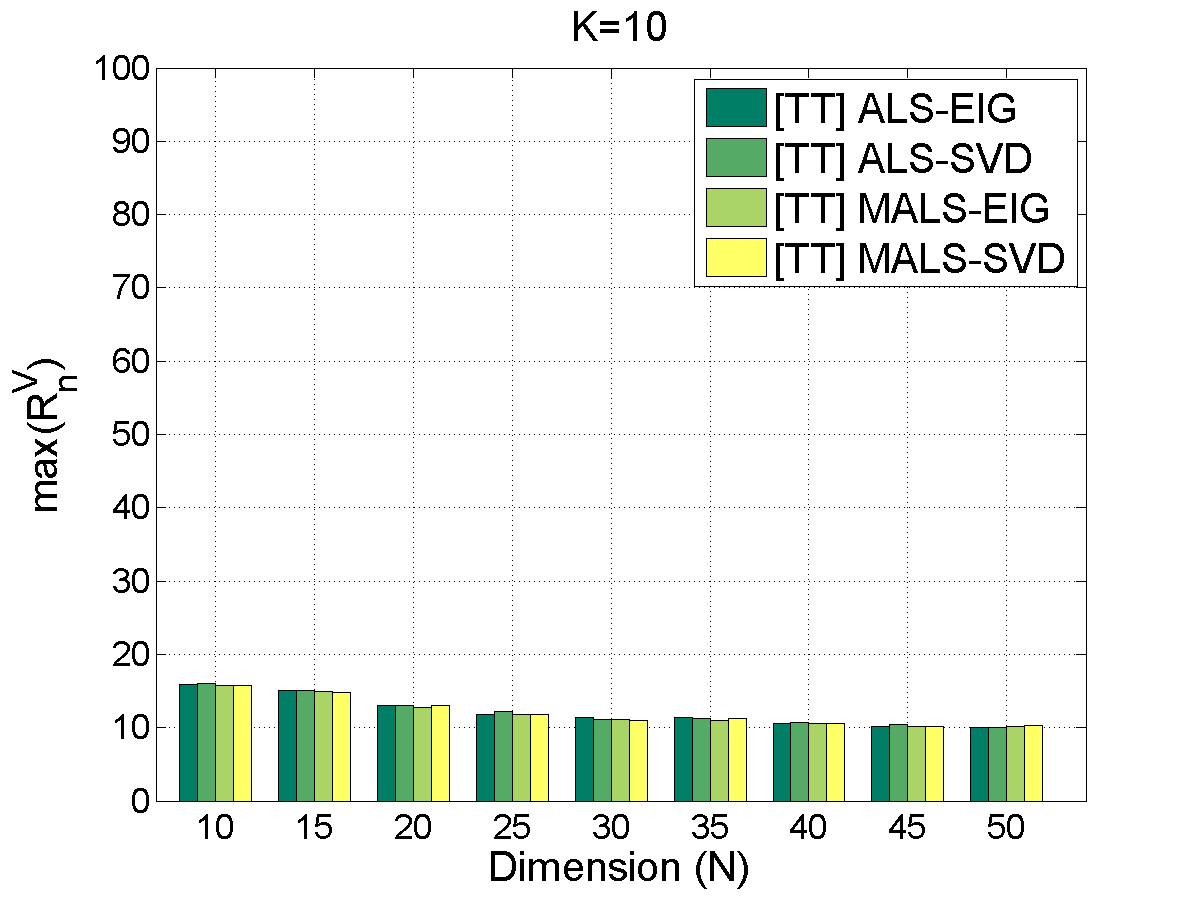} \\
(a)&(b) 
\end{tabular}
\caption{\label{Fig:performance_tridiag}
Performances for the $2^N\times 2^{N}$ random tridiagonal matrices 
with $K=10$ and $10\leq N\leq 50$.  
(a) Computational cost and (b) maximum block TT-rank of the right 
singular vectors. 
}
\end{figure}


\subsection{Random Toeplitz Matrix}
\label{sec:4-2toeplitz} 

Toeplitz matrix is a square matrix that has constant diagonals. 
An explicit matrix TT representation of a Toeplitz matrix is 
described in \cite{Kaz2013}. 
See Appendix \ref{sec:app2} for more detail. 
We generated a vector 
$\BF{x} = [x_1,\ldots,x_{2^{N+1}}]^\RM{T}$
in vector TT format with its TT-cores drawn from the 
standard normal distribution and fixed TT-ranks $R_n=R=5$. 
Then, we converted $\BF{x}$ into a Toeplitz matrix 
$\BF{A}\in\BB{R}^{2^N\times 2^N}$
in matrix TT format with entries 
	\begin{equation}
	a_{i,j} = x_{2^N+i-j}, \quad i,j=1,2,\ldots,2^N. 
	\end{equation}
The matrix TT-ranks of $\BF{A}$ are bounded by 
twice the TT-ranks of $\BF{x}$, i.e., $2R$ \cite{Kaz2013}. 

Since the matrix $\BF{A}$ is generated randomly, we 
cannot expect that the TT-ranks of the left and right singular 
vectors are bounded over $N$. Instead, we fixed the 
maximum of the TT-ranks, $R_{max}$, and compared the 
computational times and relative residuals of the algorithms. 
We set $K=10$, and $N_{sweep}=2$.  

Figures \ref{Fig:0303randomToepl}(a) and (b) show 
the performances of the SVD algorithms 
for $R_{max}=15$ and $10\leq N\leq 30$. 
In Figure \ref{Fig:0303randomToepl}(a), 
we can see that the computational costs of the 
TT-based algorithms grow logarithmically with the matrix size
because the matrix TT-ranks and the block TT-ranks are bounded
by $2R=10$ and $R_{max}=15$, respectively. 
In Figure \ref{Fig:0303randomToepl}(b), 
the relative residual values remain almost constantly around $0.15$. 
Figures \ref{Fig:0303randomToepl}(c) and (d)
show the computational costs and relative residuals 
for various $10\leq R_{max} \leq 30$ and fixed $N=30$. 
We can see that the computational cost for the ALS-SVD is the 
smallest and growing slowly as $R_{max}$ increases.

\begin{figure}
\centering
\begin{tabular}{cc}
\includegraphics[width=7.5cm]{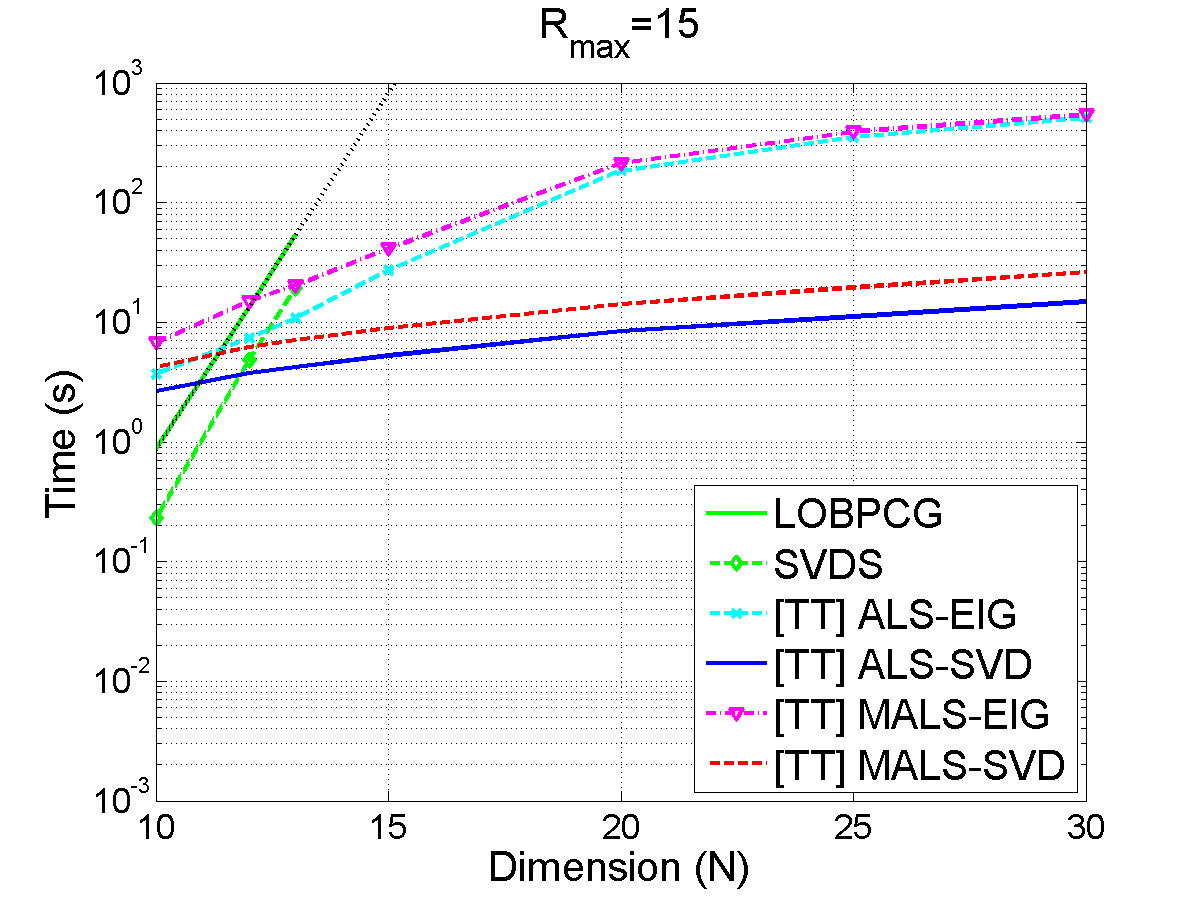} & 
\includegraphics[width=7.5cm]{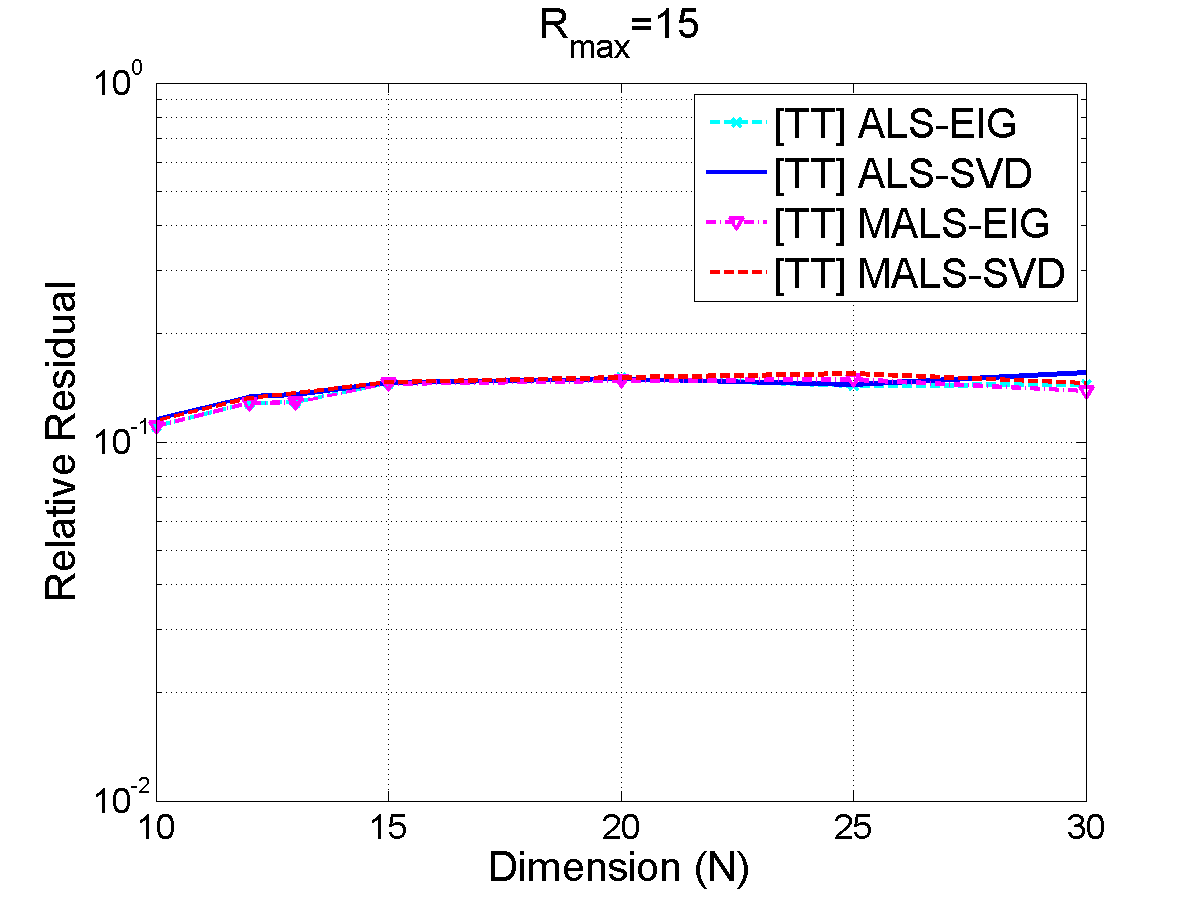}\\
(a)&(b)\\
\includegraphics[width=7.5cm]{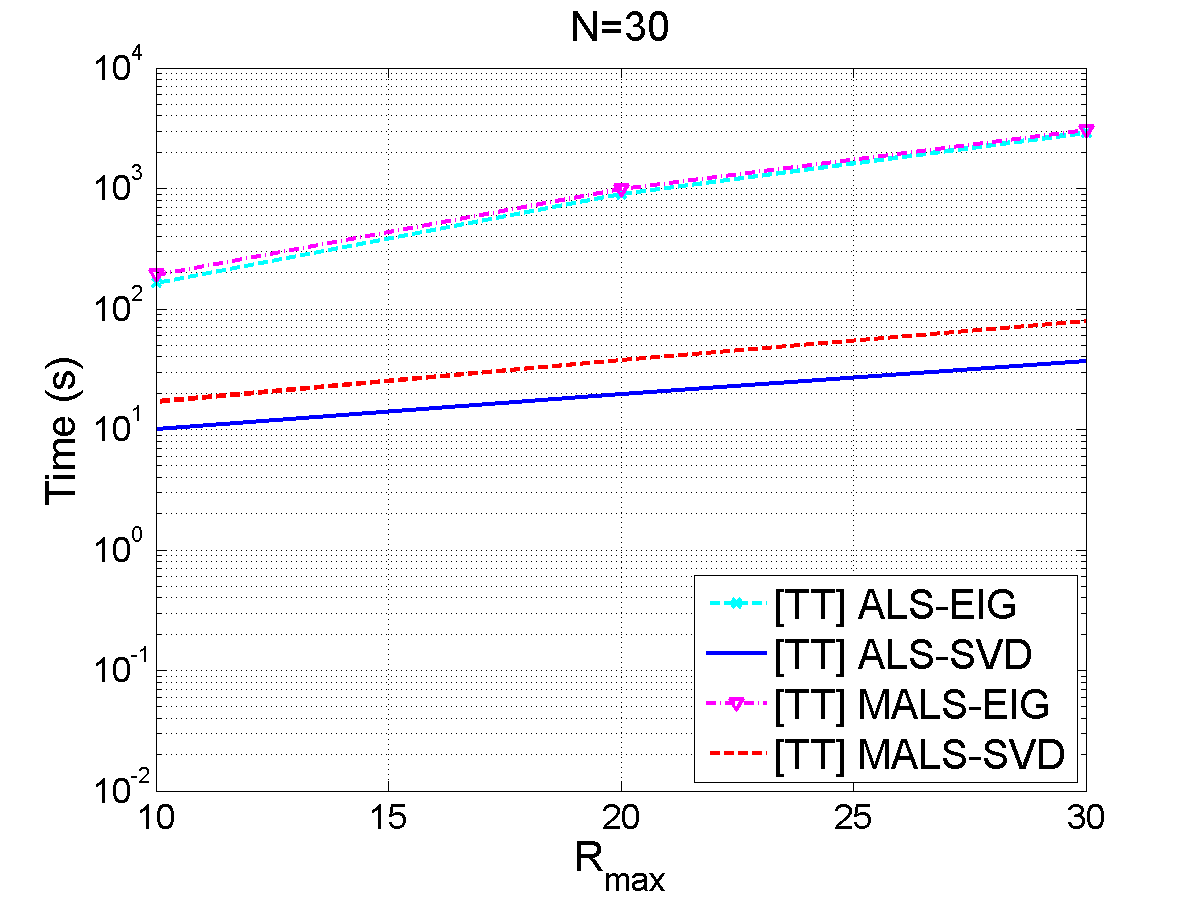} & 
\includegraphics[width=7.5cm]{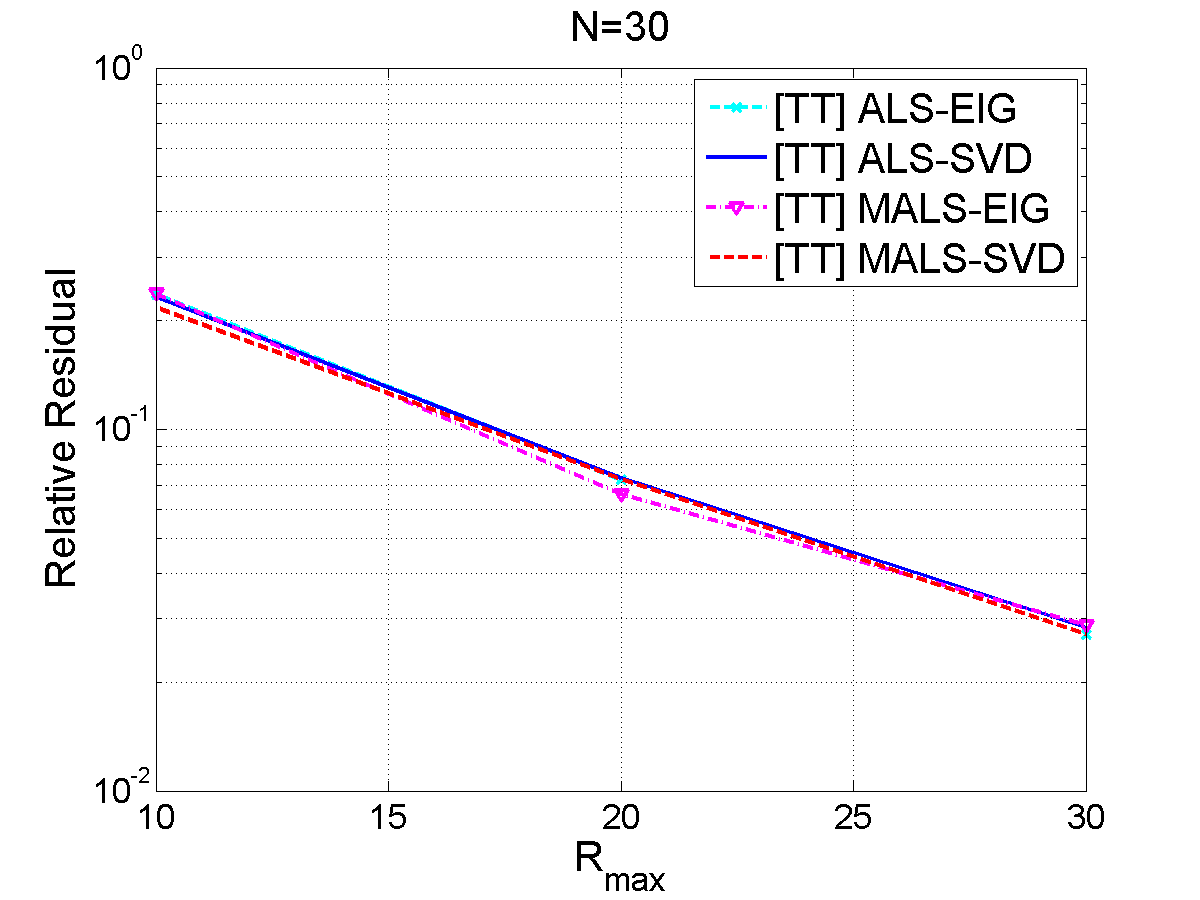}\\
(c)&(d)
\end{tabular}
\caption{\label{Fig:0303randomToepl}
Performances for $2^N\times 2^N$ random Toeplitz matrices. 
The block TT-ranks of the left and right singular vectors are 
bounded by $R_{max}$. 
(a) Computational cost and 
(b) relative residual for fixed $R_{max}=15$ and various $10\leq N\leq 30$. 
(c) Computational cost and 
(b) relative residual for various $10\leq R_{max}\leq 30$ and fixed $N=30$. 
}
\end{figure}

\section{Conclusion and Discussions}

In this paper, we proposed new SVD algorithms 
for very large-scale structured matrices
based on TT decompositions. 
Unlike previous researches focusing only on eigenvalue decomposition (EVD) of
symmetric positive semidefinite matrices
\cite{Dol2013b,
Holtz2012,HuckleWald2012,Kho2010,
KresSteinUsh2013,Leb2011,Mach2011,
USch2011}, 
the proposed algorithms do not assume symmetricity 
of the data matrix $\BF{A}$. 
We investigated the computational complexity
of the proposed algorithms rigorously, and 
provided optimized ways of tensor contractions 
for the fast computation of the singular values. 
We conducted extensive simulations
to demonstrate the effectiveness of the proposed SVD algorithms 
compared with the other TT-based algorithms which are 
based on the EVD of symmetric positive semidefinite matrices.

Once a very large-scale matrix is represented in 
matrix TT format, the proposed ALS-SVD and MALS-SVD 
algorithms can compute the SVD in logarithmic time complexity 
with respect to the matrix size 
under the assumption that the TT-ranks are bounded. 
Unlike the EVD-based methods, 
the proposed algorithms avoid 
truncation of the matrix TT-ranks of the product $\BF{A}^\RM{T}\BF{A}$
but directly optimize the maximization problem \eqref{eqn:maximize}. 
In the simulated experiments, we demonstrated that 
the computational costs of the EVD-based algorithms 
are highly affected by the matrix TT-ranks,  
and the proposed methods are highly competitive 
compared with the EVD-based algorithms. 
Moreover, we showed that the 
proposed methods can compute the SVD 
of $2^{50}\times 2^{50}$ matrices accurately even in a 
few seconds on desktop computers.

The proposed SVD methods can compute 
a few dominant singular values and corresponding singular vectors
of TT-structured matrices. 
The structured matrices used in the simulations are 
random matrices with prescribed singular values, 
Hilbert matrix, random tridiagonal matrix, and 
random Toeplitz matrix. 
The singular vectors are represented as block TT formats, 
and the TT-ranks are adaptively determined during 
iteration process. 
Moreover, we also presented the case of random Toeplitz matrices, 
where the block TT-ranks of the singular vectors are not bounded
as the matrix size increases. In this case, the proposed methods 
computed approximate solutions based on fixed TT-ranks with 
reasonable approximation errors. 
Since the TT-ranks are fixed, the computational cost will be much 
reduced if the $\delta$-truncated SVD step is replaced with 
the QR decomposition.

In the simulated experiments, we observed that the 
truncation parameter $\delta$ for the $\delta$-trucated
SVD highly affects the convergence. 
If $\delta$ is too large, then the algorithm 
falls into local minimum and its accuracy does not 
improve any more. If $\delta$ is too small, then 
the TT-ranks grow fastly and the computational 
cost increases. We initialized the $\delta$ value by 
$\delta_0=\epsilon/\sqrt{N-1}$ as proposed by 
Oseledets \cite{Ose2011}, in which case the proposed 
algorithms usually achieved the desired accuracy. 
If the TT-based ALS and MALS algorithms 
fall into local minimum, we restarted the algorithms 
with new initial block TT tensors. 
If a proper $\delta$ value is selected and the 
number $K$ of singular values is large enough, 
then the algorithms converge usually in at most 3 full sweeps. 
Moreover, the MALS algorithm shows faster convergence
than the ALS algorithm because the 
TT-ranks can be increased more fastly at each iteration.

The performance of the TT-based algorithms are highly 
dependent on the choice of the optimization algorithms 
for solving the reduced local problems. 
In the simulations we applied the MATLAB function EIGS
to the matrix $\begin{bmatrix}\BF{0}&\BF{A}\\
\BF{A}^\RM{T}&\BF{0}\end{bmatrix}$
in order to obtain accurate singular values.

In order to convert a very large-scale matrix into matrix TT format, 
it is suggested to employ cross approximation methods
\cite{Ballani2013,Ose2010}. 
We also applied the MATLAB function FUNCRS2 in 
TT-Toolbox \cite{Ose2011b} for Hilbert matrices in the 
numerical simulations. 

The proposed algorithms rely on the optimization 
with the trace function described in the maximization problem 
\eqref{eqn:maximize}, so they cannot be applied 
for computing $K$ smallest singular values directly. 
In Appendix \ref{sec:app1}, we explained how the 
$K$ smallest singular values and corresponding singular 
vectors can be computed by using the EVD-based algorithms. 
In the future work, we will develop a more efficient method 
for computing a few smallest singular values and 
corresponding singular vectors based on TT decompositions.

\appendix

\section{Optimization Problems for Extremal Singular Values}
\label{sec:app1}



The SVD of a matrix $\BF{A}\in\BB{R}^{P\times Q}$ is closely 
related to the eigenvalue decomposition (EVD)
of the following $(P+Q)\times(P+Q)$ matrix
	\begin{equation}
	\BF{B} = \begin{bmatrix} \BF{0} & \BF{A} \\ 
	\BF{A}^\RM{T} & \BF{0} \end{bmatrix}. 
	\end{equation}
In this section, we show the relationship between
the EVD optimization problems of $\BF{B}$ and the 
SVD optimization problems of $\BF{A}$. 

\subsection{Eigenvalues of $\BF{B}$}
We assume that $P\geq Q$. The SVD of the matrix 
$\BF{A}\in\BB{R}^{P\times Q}$ can be expressed as
	\begin{equation}
	\BF{A} 
	= \begin{bmatrix} \BF{U}_0 & \BF{U}_0^{\bot} \end{bmatrix}
	\begin{bmatrix}\BF{\Sigma}_0\\\BF{0}\end{bmatrix}
	\BF{V}_0^\RM{T}, 
	\end{equation}
where $\BF{U}_0\in\BB{R}^{P\times Q}$, 
$\BF{U}_0^\bot \in\BB{R}^{P\times (P-Q)}$, 
and $\BF{V}_0\in\BB{R}^{Q\times Q}$ 
are the matrices of singular vectors and 
$\BF{\Sigma}_0\in\BB{R}^{Q\times Q}$ is the diagonal matrix 
with nonnegative diagonal entries
$\sigma_1\geq \sigma_2\geq \cdots \geq \sigma_Q$. 

\begin{lem}
The EVD of the matrix $\BF{B}$ can be written by
	\begin{equation}
	\BF{B} = \BF{W}_0\BF{\Lambda}_0\BF{W}_0^\RM{T}, 
	\end{equation}
where 
	\begin{equation}
	\BF{W}_0 = \frac{1}{\sqrt{2}}
	\begin{bmatrix} \BF{U}_0& \BF{U}_0 & \sqrt{2}\BF{U}_0^\bot \\ 
	\BF{V}_0 & -\BF{V}_0 & \BF{0}\end{bmatrix}
	\in\BB{R}^{(P+Q)\times(P+Q)}, 
	\end{equation}
	\begin{equation}
	\BF{\Lambda}_0 = 
	\begin{bmatrix}
	\BF{\Sigma}_0 & & \\
	& -\BF{\Sigma}_0 & \\
	& & \BF{0}
	\end{bmatrix}
	\in\BB{R}^{(P+Q)\times(P+Q)}. 
	\end{equation}
\end{lem}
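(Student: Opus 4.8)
The plan is to verify directly that the proposed $\BF{W}_0$ is orthogonal and that it diagonalizes $\BF{B}$ with the claimed eigenvalue matrix $\BF{\Lambda}_0$; since $\BF{B}$ is symmetric, exhibiting an orthogonal $\BF{W}_0$ and a diagonal $\BF{\Lambda}_0$ with $\BF{B}\BF{W}_0=\BF{W}_0\BF{\Lambda}_0$ immediately yields the EVD $\BF{B}=\BF{W}_0\BF{\Lambda}_0\BF{W}_0^\RM{T}$. The only facts I would use are the thin-SVD identities that follow from the given factorization of $\BF{A}$, namely $\BF{A}\BF{V}_0=\BF{U}_0\BF{\Sigma}_0$, $\BF{A}^\RM{T}\BF{U}_0=\BF{V}_0\BF{\Sigma}_0$, together with $\BF{A}^\RM{T}\BF{U}_0^\bot=\BF{0}$, and the orthonormality relations $\BF{U}_0^\RM{T}\BF{U}_0=\BF{V}_0^\RM{T}\BF{V}_0=\BF{I}_Q$, $(\BF{U}_0^\bot)^\RM{T}\BF{U}_0^\bot=\BF{I}_{P-Q}$, and $\BF{U}_0^\RM{T}\BF{U}_0^\bot=\BF{0}$, all of which come from $\begin{bmatrix}\BF{U}_0&\BF{U}_0^\bot\end{bmatrix}$ being orthogonal.

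First I would check the eigen-relation one column-block at a time. Multiplying $\BF{B}$ into the first block $\tfrac{1}{\sqrt2}\begin{bmatrix}\BF{U}_0\\\BF{V}_0\end{bmatrix}$ gives $\tfrac{1}{\sqrt2}\begin{bmatrix}\BF{A}\BF{V}_0\\\BF{A}^\RM{T}\BF{U}_0\end{bmatrix}=\tfrac{1}{\sqrt2}\begin{bmatrix}\BF{U}_0\\\BF{V}_0\end{bmatrix}\BF{\Sigma}_0$, which is the same block scaled by $\BF{\Sigma}_0$; the second block $\tfrac{1}{\sqrt2}\begin{bmatrix}\BF{U}_0\\-\BF{V}_0\end{bmatrix}$ produces the sign flip and reproduces itself scaled by $-\BF{\Sigma}_0$; and the third block $\begin{bmatrix}\BF{U}_0^\bot\\\BF{0}\end{bmatrix}$ maps to $\begin{bmatrix}\BF{0}\\\BF{A}^\RM{T}\BF{U}_0^\bot\end{bmatrix}=\BF{0}$, giving the zero eigenvalues. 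Assembling the three blocks gives exactly $\BF{B}\BF{W}_0=\BF{W}_0\BF{\Lambda}_0$.

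Next I would confirm $\BF{W}_0^\RM{T}\BF{W}_0=\BF{I}_{P+Q}$ by computing the block inner products among the three column groups. The diagonal blocks for the first two groups each equal $\tfrac12(\BF{U}_0^\RM{T}\BF{U}_0+\BF{V}_0^\RM{T}\BF{V}_0)=\BF{I}_Q$, the first--second cross block equals $\tfrac12(\BF{U}_0^\RM{T}\BF{U}_0-\BF{V}_0^\RM{T}\BF{V}_0)=\BF{0}$, every cross block with the third group vanishes because $\BF{U}_0^\RM{T}\BF{U}_0^\bot=\BF{0}$, and the third diagonal block is $(\BF{U}_0^\bot)^\RM{T}\BF{U}_0^\bot=\BF{I}_{P-Q}$. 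Since the column counts add up to $Q+Q+(P-Q)=P+Q$, $\BF{W}_0$ is a genuine orthogonal matrix, so $\BF{W}_0^{-1}=\BF{W}_0^\RM{T}$ and right-multiplying the eigen-relation by $\BF{W}_0^\RM{T}$ rearranges it into the claimed factorization.

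I do not expect a genuine obstacle here; the argument is a direct block verification. The only point requiring care is the treatment of the $P-Q$ ``extra'' left singular directions: one must invoke the full (not thin) orthogonality of $\begin{bmatrix}\BF{U}_0&\BF{U}_0^\bot\end{bmatrix}$ to get both $\BF{A}^\RM{T}\BF{U}_0^\bot=\BF{0}$ and $\BF{U}_0^\RM{T}\BF{U}_0^\bot=\BF{0}$, which are exactly what make the third block contribute zero eigenvalues while keeping $\BF{W}_0$ orthogonal. This also tacitly uses the standing assumption $P\geq Q$, so that $\BF{U}_0^\bot$ has $P-Q\geq 0$ columns.
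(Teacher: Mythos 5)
Your proposal is correct and follows exactly the route of the paper's own (very terse) proof: verify $\BF{W}_0^\RM{T}\BF{W}_0=\BF{I}_{P+Q}$ and $\BF{B}\BF{W}_0=\BF{W}_0\BF{\Lambda}_0$, then conclude. You simply spell out the block computations that the paper leaves implicit, including the correct handling of the $\BF{U}_0^\bot$ block.
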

\begin{proof}
We can compute that $\BF{W}_0^\RM{T}\BF{W}_0=\BF{I}_{P+Q}$. 
We can show that $\BF{BW}_0 = \BF{W}_0\BF{\Lambda}_0$. 
\end{proof}

We can conclude that the eigenvalues of the matrix $\BF{B}$ consist of 
$\pm \sigma_1,\pm \sigma_2,\ldots,\pm \sigma_Q$, and an extra zero
of multiplicity $P-Q$. 


\subsection{Maximal Singular Values}

The $K$ largest eigenvalues of the matrix $\BF{B}$ can be computed 
by solving the trace maximization problem \cite[Theorem 1]{Fan1949}
	\begin{equation}\label{eqn:maximize_W}
	\begin{split}
	\maximize_{\BF{W}}
	& \qquad
	\text{trace}\left( \BF{W}^\RM{T}\BF{B}\BF{W} \right)\\
	\text{subject to}
	& \qquad
	\BF{W}^\RM{T}\BF{W}=\BF{I}_K. 
	\end{split}
	\end{equation}
Instead of building the matrix $\BF{B}$ explicitly, 
we solve the equivalent maximization problem described as follows. 
\begin{prop}
For $K\leq Q$, the maximization problem (\ref{eqn:maximize_W}) is equivalent to 
	\begin{equation}
	\begin{split}
	\maximize_{\BF{U}, \BF{V}}
	& \qquad
	\text{trace}\left( \BF{U}^\RM{T}\BF{A}\BF{V} \right)\\
	\text{subject to}
	& \qquad 
	\BF{U}^\RM{T}\BF{U}=
		\BF{V}^\RM{T}\BF{V}=\BF{I}_K. 
	\end{split}
	\end{equation}
\end{prop}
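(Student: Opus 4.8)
The plan is to show that both problems attain the same optimal value, namely $\sigma_1+\sigma_2+\cdots+\sigma_K$, the sum of the $K$ largest singular values of $\BF{A}$; I would not attempt a bijection between the two feasible sets, because they genuinely differ (see the obstacle below). For the problem \eqref{eqn:maximize_W}, the Ky Fan trace maximum principle \cite[Theorem 1]{Fan1949} gives that its maximum equals the sum of the $K$ largest eigenvalues of $\BF{B}$. By the preceding lemma the spectrum of $\BF{B}$ is $\pm\sigma_1,\ldots,\pm\sigma_Q$ together with a zero eigenvalue of multiplicity $P-Q$, so for $K\leq Q$ the $K$ largest eigenvalues are exactly $\sigma_1\geq\cdots\geq\sigma_K\geq 0$ and the maximum of \eqref{eqn:maximize_W} is $\sum_{i=1}^K\sigma_i$. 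It then remains to show the SVD-type problem has the same value.

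The engine of the comparison is an elementary block computation. For any $\BF{U}\in\BB{R}^{P\times K}$ and $\BF{V}\in\BB{R}^{Q\times K}$, set $\BF{W}=\tfrac{1}{\sqrt{2}}\begin{bmatrix}\BF{U}\\\BF{V}\end{bmatrix}$. Multiplying out the $2\times 2$ block form of $\BF{B}$ and using $\text{trace}(\BF{X})=\text{trace}(\BF{X}^\RM{T})$ to fuse the two off-diagonal contributions yields
	\begin{equation}
	\text{trace}\!\left(\BF{W}^\RM{T}\BF{B}\BF{W}\right)
	= \text{trace}\!\left(\BF{U}^\RM{T}\BF{A}\BF{V}\right),
	\qquad
	\BF{W}^\RM{T}\BF{W}=\tfrac{1}{2}\!\left(\BF{U}^\RM{T}\BF{U}+\BF{V}^\RM{T}\BF{V}\right).
	\end{equation}
Both identities are routine to verify and I would state them without grinding through the entries.

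From here the equivalence follows by two one-sided inequalities. First, whenever $(\BF{U},\BF{V})$ is feasible for the SVD problem, i.e. $\BF{U}^\RM{T}\BF{U}=\BF{V}^\RM{T}\BF{V}=\BF{I}_K$, the associated $\BF{W}$ satisfies $\BF{W}^\RM{T}\BF{W}=\BF{I}_K$ and has equal objective value, so it is feasible for \eqref{eqn:maximize_W}; this shows the SVD maximum is at most $\sum_{i=1}^K\sigma_i$. For the reverse direction I would exhibit an explicit maximizer: take $\BF{U}=[\BF{u}_1,\ldots,\BF{u}_K]$ and $\BF{V}=[\BF{v}_1,\ldots,\BF{v}_K]$ to be the leading $K$ left and right singular vectors of $\BF{A}$ (the first $K$ columns of $\BF{U}_0$ and $\BF{V}_0$ in the lemma). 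These have orthonormal columns, hence are SVD-feasible, and $\text{trace}(\BF{U}^\RM{T}\BF{A}\BF{V})=\sum_{i=1}^K\sigma_i$, matching the value of \eqref{eqn:maximize_W}. Combining the two bounds gives the claimed equality of optima.

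The one point requiring care — and the main obstacle to a naive ``transport the maximum'' argument — is that the map $\BF{W}=\tfrac{1}{\sqrt{2}}[\BF{U};\BF{V}]$ carries the SVD-feasible set \emph{into} the EVD-feasible set but not \emph{onto} it: a generic orthonormal $\BF{W}$ splits into blocks satisfying only $\BF{U}^\RM{T}\BF{U}+\BF{V}^\RM{T}\BF{V}=\BF{I}_K$, which does not force the two summands to equal $\BF{I}_K$ separately. Consequently one cannot simply push the Ky Fan maximizer back through this map; instead the common value must be pinned down via Ky Fan for \eqref{eqn:maximize_W} and then confirmed to be attained on the smaller SVD-feasible set, as above. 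This attainment step is exactly where the hypothesis $K\leq Q$ is used, since it guarantees the top $K$ eigenvalues of $\BF{B}$ are nonnegative singular values rather than being forced to include some $-\sigma_j$.
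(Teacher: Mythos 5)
Your proof is correct and follows essentially the same route as the paper: the same stacking $\BF{W}=\tfrac{1}{\sqrt{2}}[\BF{U};\BF{V}]$ with the trace identity, the one-sided inequality from mapping SVD-feasible pairs into the EVD-feasible set, and attainment of the common value $\sum_{i=1}^K\sigma_i$ at the leading singular vectors. You merely make explicit two points the paper leaves implicit — the Ky Fan/spectrum argument identifying the EVD maximum and the non-surjectivity of the embedding, which is indeed where the hypothesis $K\leq Q$ enters.
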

\begin{proof}
Let 
	\begin{equation}
	\BF{W}=\frac{1}{\sqrt{2}}\begin{bmatrix}\BF{U}\\ \BF{V}\end{bmatrix}
	\in\BB{R}^{(P+Q)\times K}, 
	\end{equation}
then 
	\begin{equation}
	\text{trace}\left( \BF{W}^\RM{T}\BF{B}\BF{W} \right)
	=\text{trace}\left(\BF{U}^\RM{T}\BF{AV} \right). 
	\end{equation}
First, we can show that 
	\begin{equation}
	\max_{\BF{W}^\RM{T}\BF{W} = \BF{I}_K} 
	\text{trace}\left( \BF{W}^\RM{T}\BF{B}\BF{W} \right)
	\geq 
	\max_{\BF{U}^\RM{T}\BF{U} 
	= \BF{V}^\RM{T}\BF{V} = \BF{I}_K} 
	\text{trace}\left(\BF{U}^\RM{T}\BF{AV} \right). 
	\end{equation}
Next, we can show that 
the maximum value $\sigma_1+\sigma_2+\cdots+\sigma_K$
of $\text{trace}\left( \BF{W}^\RM{T}\BF{B}\BF{W} \right)$
is obtained by $\text{trace}\left(\BF{U}^\RM{T}\BF{AV} \right)$ when 
$\BF{U}$ and $\BF{V}$ are equal to the first $K$ singular vectors 
of $\BF{U}_0$ and $\BF{V}_0$. 
\end{proof}

\subsection{Minimal Singular Values}

Suppose that $P=Q$. The $K$ minimal singular values of $\BF{A}$
can be obtained by computing $2K$ eigenvalues of $\BF{B}$
with the smallest magnitudes, that is, $\pm \sigma_{Q-K+1}, 
\pm \sigma_{Q-K+2}, \ldots, \pm \sigma_Q$. 
Computing the $2K$ eigenvalues of 
$\BF{B}$ with the smallest magnitudes can be formulated 
by the following trace minimization problem
	\begin{equation}\label{eqn:minimize_W}
	\begin{split}
	\minimize_{\BF{W}}
	& \qquad
	\text{trace}\left( \BF{W}^\RM{T}\BF{B}^2\BF{W} \right)\\
	\text{subject to}
	& \qquad
	\BF{W}^\RM{T}\BF{W}=\BF{I}_{2K}. 
	\end{split}
	\end{equation}
We can translate the above minimization problem into 
the equivalent minimization problem without building 
the matrix $\BF{B}$ explicitly as follows. 

\begin{prop}
The minimization problem (\ref{eqn:minimize_W}) 
is equivalent to the following two minimization problems
if a permutation ambiguity is allowed:
	\begin{equation}\label{eqn:minimize_1}
	\begin{split}
	\minimize_{\BF{V}}
	& \qquad
	\text{trace}\left( \BF{V}^\RM{T}\BF{A}^\RM{T}\BF{A}\BF{V} \right)\\
	\text{subject to}
	& \qquad 
	\BF{V}^\RM{T}\BF{V}=\BF{I}_K
	\end{split}
	\end{equation}
and 
	\begin{equation}\label{eqn:minimize_2}
	\begin{split}
	\minimize_{\BF{U}}
	& \qquad
	\text{trace}\left( \BF{U}^\RM{T}\BF{AA}^\RM{T}\BF{U} \right)\\
	\text{subject to}
	& \qquad 
	\BF{U}^\RM{T}\BF{U}=\BF{I}_K. 
	\end{split}
	\end{equation}
\end{prop}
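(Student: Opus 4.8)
The plan is to exploit the block structure of $\BF{B}$ by passing to its square. The first step is to compute
\begin{equation}
\BF{B}^2 = \begin{bmatrix} \BF{AA}^\RM{T} & \BF{0} \\ \BF{0} & \BF{A}^\RM{T}\BF{A} \end{bmatrix},
\end{equation}
so that $\BF{B}^2$ is block diagonal with the two symmetric positive semidefinite blocks $\BF{AA}^\RM{T}$ and $\BF{A}^\RM{T}\BF{A}$. From the Lemma above, the eigenvalues of $\BF{B}$ are $\pm\sigma_1,\ldots,\pm\sigma_Q$ (with $P=Q$ there is no extra zero block), hence the eigenvalues of $\BF{B}^2$ are exactly $\sigma_1^2,\ldots,\sigma_Q^2$, each of multiplicity two; one copy sits in the $\BF{AA}^\RM{T}$ block and one in the $\BF{A}^\RM{T}\BF{A}$ block.

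The second step is to apply the Ky Fan trace minimization principle to each of the three problems. For \eqref{eqn:minimize_W}, the minimum over $\BF{W}^\RM{T}\BF{W}=\BF{I}_{2K}$ equals the sum of the $2K$ smallest eigenvalues of $\BF{B}^2$, which by the multiplicity-two structure is $2\sum_{i=Q-K+1}^{Q}\sigma_i^2$. For \eqref{eqn:minimize_1} and \eqref{eqn:minimize_2}, the eigenvalues of $\BF{A}^\RM{T}\BF{A}$ and of $\BF{AA}^\RM{T}$ are both $\sigma_1^2,\ldots,\sigma_Q^2$, so the minimum of each over a $K$-frame equals $\sum_{i=Q-K+1}^{Q}\sigma_i^2$. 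Adding the two smaller minima reproduces the minimum of \eqref{eqn:minimize_W}, establishing the equality of optimal values.

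The third step is the constructive correspondence between minimizers. Given optimal $\BF{U}$ and $\BF{V}$ for \eqref{eqn:minimize_2} and \eqref{eqn:minimize_1} --- that is, $K$ left and $K$ right singular vectors associated with the $K$ smallest singular values --- I would form
\begin{equation}
\BF{W} = \begin{bmatrix} \BF{U} & \BF{0} \\ \BF{0} & \BF{V} \end{bmatrix} \in\BB{R}^{(P+Q)\times 2K},
\end{equation}
and verify directly that $\BF{W}^\RM{T}\BF{W}=\BF{I}_{2K}$ and that $\text{trace}(\BF{W}^\RM{T}\BF{B}^2\BF{W})$ attains the minimum value computed above; conversely, any block-separated minimizer $\BF{W}$ yields optimal $\BF{U}$ and $\BF{V}$ upon reading off its upper and lower blocks.

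The main obstacle --- and the reason the statement is asserted only up to a permutation ambiguity --- is that a generic minimizer of \eqref{eqn:minimize_W} need not be block separated. Because each $\sigma_i^2$ is a doubly degenerate eigenvalue of $\BF{B}^2$, the optimal $2K$-dimensional invariant subspace is spanned by the pairs $\{(\BF{u}_i^\RM{T},\BF{0})^\RM{T},(\BF{0},\BF{v}_i^\RM{T})^\RM{T}\}_{i>Q-K}$, and an arbitrary orthonormal basis of this subspace mixes the upper and lower blocks. The resolution I would give is that the block-diagonal structure always allows one to rotate within each degenerate eigenspace to a separated basis, and that the remaining freedom --- the ordering of the $2K$ columns, the assignment of each column to the $\BF{U}$ or $\BF{V}$ block, and any rotations forced by ties among the $\sigma_i$ --- is exactly the permutation ambiguity in the statement; this same freedom accounts for the loss of the $\pm$ sign pairing of $\BF{B}$ that occurs upon squaring.
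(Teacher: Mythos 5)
Your proof is correct, and it rests on the same core mechanism as the paper's: $\BF{B}^2$ is block diagonal, so the trace objective and the orthonormality constraint both separate into a $\BF{U}$-part and a $\BF{V}$-part. The differences are worth noting. The paper's proof consists of a single ansatz, $\BF{W}=\tfrac{1}{\sqrt{2}}\begin{bmatrix}\BF{U}&\BF{U}\\ \BF{V}&-\BF{V}\end{bmatrix}$, chosen to mirror the eigenvector matrix $\BF{W}_0$ of $\BF{B}$ from the preceding Lemma; it then verifies that $\text{trace}(\BF{W}^\RM{T}\BF{B}^2\BF{W})=\text{trace}(\BF{U}^\RM{T}\BF{AA}^\RM{T}\BF{U})+\text{trace}(\BF{V}^\RM{T}\BF{A}^\RM{T}\BF{A}\BF{V})$ and that $\BF{W}^\RM{T}\BF{W}=\BF{I}_{2K}$ iff $\BF{U}^\RM{T}\BF{U}=\BF{V}^\RM{T}\BF{V}=\BF{I}_K$, and stops there. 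You instead use the block-diagonal $\BF{W}=\begin{bmatrix}\BF{U}&\BF{0}\\ \BF{0}&\BF{V}\end{bmatrix}$, which yields the same two identities even more directly since $\BF{B}^2$ itself is block diagonal. More importantly, you supply two things the paper leaves implicit: an explicit Ky Fan computation showing that the optimal value $2\sum_{i=Q-K+1}^{Q}\sigma_i^2$ of \eqref{eqn:minimize_W} equals the sum of the optimal values of \eqref{eqn:minimize_1} and \eqref{eqn:minimize_2} (the paper's restricted parametrization only shows that separated $\BF{W}$'s realize the separated problems, not that they attain the global minimum), and a discussion of why a generic minimizer of \eqref{eqn:minimize_W} need not be block separated and how the doubly degenerate eigenspaces of $\BF{B}^2$ let you rotate to a separated basis --- which is precisely the content of the ``permutation ambiguity'' clause that the paper's proof never addresses. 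Your version is therefore a strictly more complete argument along the same lines.
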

That is, the $K$ minimal singular values of $\BF{A}$ can be computed 
by applying the eigenvalue decomposition 
for $\BF{A}^\RM{T}\BF{A}$ and $\BF{A}\BF{A}^\RM{T}$. 

\begin{proof}
Let
	\begin{equation}
	\BF{W}=
	\frac{1}{\sqrt{2}}\begin{bmatrix}\BF{U}&\BF{U}\\ 
	\BF{V}&-\BF{V}\end{bmatrix}
	\in\BB{R}^{(P+Q)\times 2K}, 
	\end{equation}
then we have
	\begin{equation}
	\text{trace}\left(\BF{W}^\RM{T}\BF{B}^2\BF{W}\right)
	= \text{trace}\left(\BF{U}^\RM{T}\BF{AA}^\RM{T}\BF{U}\right)
	+ \text{trace}\left(\BF{V}^\RM{T}\BF{A}^\RM{T}\BF{A}\BF{V}\right). 
	\end{equation}
By algebraic manipulation, we can derive that 
the constraint $\BF{W}^\RM{T}\BF{W}=\BF{I}_{2K}$ is equivalent 
to $\BF{U}^\RM{T}\BF{U}=\BF{V}^\RM{T}\BF{V}=\BF{I}_K$. 
\end{proof}

\section{Explicit Tensor Train Representation of Toeplitz Matrix 
and Hankel Matrix}
\label{sec:app2}

Explicit TT representations of Toeplitz matrices are
presented in \cite{Kaz2013}. In this section, we 
summarize some of the simplest results of \cite{Kaz2013}, 
and extend them to Hankel matrix and rectangular 
submatrices.  

First, we introduct the Kronecker product representation
for matrix TT format. 
Given a matrix $\BF{A}\in\BB{R}^{I_1I_2\cdots I_N\times 
J_1J_2\cdots J_N}$ in matrix TT format \eqref{eqn:matrixTTc}, 
each entry of the tensor $\ten{A}\in\BB{R}^{I_1\times J_1\times\cdots\times 
I_N\times J_N}$ is representedy by the sum of scalar products
	\begin{equation}
	a_{i_1,j_1,i_2,j_2,\ldots,i_N,j_N}
	= \sum_{r^A_1=1}^{R^A_1} \sum_{r^A_2=1}^{R^A_2}
	\cdots \sum_{r^A_{N-1}=1}^{R^A_{N-1}}
	a^{(1)}_{1,i_1,j_1,r^A_1} a^{(2)}_{r^A_1,i_2,j_2,r^A_2}
	\cdots a^{(N)}_{r^A_{N-1},i_N,j_N,1},
	\end{equation}
which is equal to each entry of the matrix $\BF{A}$, i.e., 
$a_{(i_1,i_2,\ldots,i_N),(j_1,j_2,\ldots,j_N)}$, where $(i_1,i_2,\ldots,i_N)$
is the multi-index introduced in \eqref{eqn:multi_index_paran}. 
From this expression, we can derive that the matrix $\BF{A}$
can be represented as sums of Kronecker products of matrices 
	\begin{equation} \label{eqn:matrix_TT_kron}
	\BF{A} = \sum_{r^A_1=1}^{R^A_1} 
	\sum_{r^A_2=1}^{R^A_2}
	\cdots \sum_{r^A_{N-1}=1}^{R^A_{N-1}}
	\BF{A}^{(N)}_{1,r^A_{N-1}}\otimes 
	\BF{A}^{(N-1)}_{r^A_{N-1},r^A_{N-2}}\otimes 
	\cdots \otimes \BF{A}^{(1)}_{r^A_1,1}, 
	\end{equation}
where the matrices $\BF{A}^{(n)}_{r^A_n,r^A_{n-1}}\in\BB{R}^{I_n\times J_n}$ 
are defined by 
	\begin{equation}
	\BF{A}^{(n)}_{r^A_n,r^A_{n-1}} = 
	\left(a^{(n)}_{r^A_{n-1},i_n,j_n,r^A_n}\right)_{i_n,j_n}
	= \ten{A}^{(n)}(r^A_{n-1},:,:,r^A_n). 
	\end{equation}
Note that the positions of the indices $r^A_{n-1}$ and $r^A_n$ 
have been switched for notational convenience.

Next, we present the explicit TT representations for 
Toeplitz matrices and Hankel matrices. 
The $2^N\times 2^N$ upper triangular Toeplitz matrix 
generated by $\left[s_1,s_2,\ldots,s_{2^N-1}\right]^\RM{T}$
is written by 
	\begin{equation}\label{eq:toep}
	\BF{T} = \begin{bmatrix}
	0 & s_1 & s_2 & \cdots & s_{2^N-2} & s_{2^N-1}  \\
	  & 0 & s_1 & \cdots & s_{2^N-3} & s_{2^N-2} \\
	  &   & & & \vdots & \vdots \\
	  & 	& &  &  0 & s_1 \\
	  & 	& & &  & 0 
	\end{bmatrix}. 
	\end{equation}
Similarly, the $2^N\times 2^N$ upper anti-triangular Hankel matrix 
generated by $\left[s_1,s_2,\ldots,s_{2^N-1}\right]^\RM{T}$ is written by 
	\begin{equation}\label{eq:hilb}
	\BF{H} = \begin{bmatrix}
	s_{2^N-1} & s_{2^N-2} & \cdots & s_2 & s_1 & 0  \\
	s_{2^N-2} & s_{2^N-3} & \cdots & s_1 & 0 &  \\
	 \vdots & \vdots & & & & \\
	 s_1 & 0	& &  &  & \\
	 0 &	& & &  & 
	\end{bmatrix}. 
	\end{equation}	
The matrix TT representation for 
the Toeplitz matrix is presented in the following theorem. 

\begin{thm}[An explicit matrix TT representation of 
Toeplitz matrix, \cite{Kaz2013}] 

Let 
	\begin{equation}
	\BF{I} = \begin{bmatrix}
	1&0\\0&1 
	\end{bmatrix}, \quad
	\BF{J} = \begin{bmatrix}
	0&1\\0&0 
	\end{bmatrix}, \quad
	\BF{K} = \begin{bmatrix}
	0&0\\1&0 
	\end{bmatrix}
	\end{equation}
be $2\times 2$ matrices and let
	\begin{equation}\label{block_matrixL01}
	\widetilde{\BF{L}}^{(N)}_1 = 
	\begin{bmatrix} \BF{I} & \BF{J} \end{bmatrix}, 
	\quad
	\widetilde{\BF{L}}^{(N-1)}_1 = 
	\cdots = \widetilde{\BF{L}}^{(2)}_1 =
	\begin{bmatrix} \BF{I} & \BF{J}\\ \BF{0}&\BF{K} \end{bmatrix}, 
	\quad
	\widetilde{\BF{L}}^{(1)}_1 = 
	\begin{bmatrix} \BF{J}\\ \BF{K} \end{bmatrix},
	\end{equation}
	\begin{equation}
	\widetilde{\BF{L}}^{(N)}_2 = 
	\begin{bmatrix} \BF{J} & \BF{0} \end{bmatrix}, 
	\quad
	\widetilde{\BF{L}}^{(N-1)}_2 = 
	\cdots = \widetilde{\BF{L}}^{(2)}_2 =
	\begin{bmatrix} \BF{J} & \BF{0}\\ \BF{K}&\BF{I} \end{bmatrix}, 
	\quad
	\widetilde{\BF{L}}^{(1)}_2 = 
	\begin{bmatrix} \BF{0}\\ \BF{I} \end{bmatrix}
	\end{equation}
be block matrices. For each of the 
block matrices $\widetilde{\BF{L}}^{(n)}_{k_n},$ 
$k_n=1,2$, we denote the $(q_n,q_{n-1})$th block of 
$\widetilde{\BF{L}}^{(n)}_{k_n}$
by $\BF{L}^{(n)}_{q_n,k_n,q_{n-1}}\in\BB{R}^{2\times 2}$, 
that is, 
	\begin{equation}
	\widetilde{\BF{L}}^{(n)}_{k_n} = 
	\left[ \BF{L}^{(n)}_{q_n,k_n,q_{n-1}}  
	\right]_{q_n,q_{n-1}}, 
	\quad k_n=1,2.
	\end{equation} 
Suppose that the vector 
$\BF{s}=\left[s_1,s_2,\ldots,s_{2^N}\right]^\RM{T}$
of length $2^N$ is represented in TT format as 
	\begin{equation}
	s_{k_1,k_2,\ldots,k_N} =
	\sum_{r_1=1}^{R_1} \sum_{r_2=1}^{R_2}\cdots 
	\sum_{r_{N-1}=1}^{R_{N-1}}
	s^{(1)}_{1,k_1,r_1}s^{(2)}_{r_1,k_2,r_2}\cdots 
	s^{(N)}_{r_{N-1},k_N,1}. 
	\end{equation}
Then, the upper triangular Toeplitz matrix \eqref{eq:toep}
is expressed in matrix TT format as 
	\begin{equation}
	\BF{T} = 
	\sum_{t_1=1}^{2R_1} \sum_{t_2=1}^{2R_2}\cdots 
	\sum_{t_{N-1}=1}^{2R_{N-1}}
	\BF{T}^{(N)}_{1,t_{N-1}} \otimes 
	\BF{T}^{(N-1)}_{t_{N-1}, t_{N-2}}\otimes
	\cdots \otimes \BF{T}^{(1)}_{t_1,1}, 
	\end{equation}
where $\BF{T}^{(n)}_{t_n,t_{n-1}}\in\BB{R}^{2\times 2}$ 
are defined by 
	\begin{equation}
	\BF{T}^{(n)}_{(r_n,q_n), (r_{n-1},q_{n-1})} = 
	\sum_{k_n=1}^{2} s^{(n)}_{r_{n-1},k_n,r_n}
	\BF{L}^{(n)}_{q_{n},k_n,q_{n-1}}
	\end{equation}
with $t_n=(r_n,q_n)$ for $n=1,2,\ldots,N.$
\end{thm}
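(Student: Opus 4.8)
The plan is to verify the claimed identity entrywise, reducing it to a statement about binary addition with carries. By the Kronecker-product form of the matrix TT format in \eqref{eqn:matrix_TT_kron}, it suffices to check that the $\left((i_1,\ldots,i_N),(j_1,\ldots,j_N)\right)$ entry of the right-hand side equals $T_{(i_1,\ldots,i_N),(j_1,\ldots,j_N)}$, which by \eqref{eq:toep} is $s_{(j_1,\ldots,j_N)-(i_1,\ldots,i_N)}$ when $(j_1,\ldots,j_N)>(i_1,\ldots,i_N)$ and $0$ otherwise. Writing out the right-hand entry and substituting the definition of $\BF{T}^{(n)}_{t_n,t_{n-1}}$, I would split each combined index $t_n=(r_n,q_n)$ so that the sum factors as a product of two independent contractions: the contraction over the $r_n$ of the cores $s^{(n)}_{r_{n-1},k_n,r_n}$, which reproduces $s_{(k_1,\ldots,k_N)}$ by the assumed TT representation of $\BF{s}$, and the contraction over the carry indices $q_n$ of the $2\times 2$ blocks $\BF{L}^{(n)}_{q_n,k_n,q_{n-1}}$.

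First I would establish the key lemma that, for each fixed pair of multi-indices and each fixed $(k_1,\ldots,k_N)$,
\begin{equation}
\sum_{q_1,\ldots,q_{N-1}} \prod_{n=1}^{N}
\left( \BF{L}^{(n)}_{q_n,k_n,q_{n-1}} \right)_{i_n,j_n}
=
\begin{cases} 1, & (j_1,\ldots,j_N)-(i_1,\ldots,i_N) = (k_1,\ldots,k_N), \\ 0, & \text{otherwise,} \end{cases}
\end{equation}
with $q_0=q_N=1$. The idea is to read the carry index $q_n$ as the carry bit $c_n\in\{0,1\}$ produced when forming $\hat\jmath=\hat\imath+\hat m+1$ in base two, where a hat denotes the zero-based index obtained by subtracting one and $\hat m=\sum_n (k_n-1)2^{n-1}$, so that $\hat m+1$ equals the shift $(k_1,\ldots,k_N)$. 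Inspecting the entries of $\BF{I}$, $\BF{J}$, and $\BF{K}$ against the four cases of the local relation $(i_n-1)+(k_n-1)+c_{n-1}=(j_n-1)+2c_n$, I would check that each block $\BF{L}^{(n)}_{q_n,k_n,q_{n-1}}$ has its $(i_n,j_n)$ entry equal to $1$ exactly when this single-digit relation holds under the identification $q=1\leftrightarrow c=0$, $q=2\leftrightarrow c=1$, and $0$ otherwise; in particular the off-diagonal zero blocks of $\widetilde{\BF{L}}^{(n)}_1$ and $\widetilde{\BF{L}}^{(n)}_2$ forbid exactly the carry transitions that are arithmetically impossible. The two degenerate boundary cores fix the forced carries: the single column of $\widetilde{\BF{L}}^{(1)}$ imposes the incoming carry $c_0=1$ coming from the ``$+1$'' offset in $(k_1,\ldots,k_N)=\hat m+1$, and the single row of $\widetilde{\BF{L}}^{(N)}$ imposes $c_N=0$, i.e.\ no overflow. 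Because the carry sequence in binary addition is uniquely determined by the summands, the product over $n$ telescopes to $1$ precisely when the global relation $\hat\jmath=\hat\imath+\hat m+1$ holds with no overflow, equivalently $(j_1,\ldots,j_N)-(i_1,\ldots,i_N)=(k_1,\ldots,k_N)$, and vanishes otherwise.

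Finally I would assemble the two factors: multiplying the selector of the lemma by $s_{(k_1,\ldots,k_N)}$ and summing over all $k_1,\ldots,k_N$ leaves exactly $s_{(j_1,\ldots,j_N)-(i_1,\ldots,i_N)}$ whenever a valid carry path exists and $0$ otherwise. The condition $c_N=0$ rules out precisely the pairs with $(j_1,\ldots,j_N)\le(i_1,\ldots,i_N)$, as well as the shift $2^N$ that would index the unused entry $s_{2^N}$, thereby reproducing the strictly upper triangular pattern of \eqref{eq:toep} and completing the verification.

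I expect the main obstacle to be the lemma, and within it the careful bookkeeping of the carry-to-state correspondence: pinning down that the ``$+1$'' forces $c_0=1$ at the bottom boundary while the interior states follow $q=1\leftrightarrow c=0$, and confirming that the zero blocks correctly suppress the impossible transitions. Once the finite-automaton interpretation of the $\widetilde{\BF{L}}^{(n)}_{k_n}$ is in place, the factorization of the rank and carry sums together with the uniqueness of the carry chain make the remaining steps routine.
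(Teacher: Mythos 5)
Your proposal is correct, and I verified the key computations. Note first that the paper itself offers no proof of this theorem: it is quoted from Kazeev--Khoromskij--Tyrtyshnikov \cite{Kaz2013} with the appendix explicitly stating that it only ``summarizes'' results from that reference. So your argument is a genuinely independent, self-contained verification rather than a variant of anything in the paper. The route you take --- factor the entrywise sum via \eqref{eqn:matrix_TT_kron} into the rank contraction (which reproduces $s_{(k_1,\ldots,k_N)}$) times the carry contraction, then prove the selector lemma by reading the bond index $q_n$ as the carry bit in the base-two addition $\hat\jmath=\hat\imath+\hat m+1$ --- is sound. I checked all twelve nonboundary blocks against the local relation $(i_n-1)+(k_n-1)+c_{n-1}=(j_n-1)+2c_n$ under $q=1\leftrightarrow c=0$, $q=2\leftrightarrow c=1$: e.g.\ $\BF{L}^{(n)}_{1,1,2}=\BF{J}$ matches $c_{n-1}=1,\ c_n=0,\ \hat k_n=0$, which forces $(\hat\imath_n,\hat\jmath_n)=(0,1)$, and the zero blocks $\BF{L}^{(n)}_{2,1,1}$ and $\BF{L}^{(n)}_{1,2,2}$ sit exactly at the arithmetically impossible transitions. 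The boundary cores also check out: the blocks of $\widetilde{\BF{L}}^{(1)}_{k_1}$ are precisely those of an interior core restricted to incoming carry $1$ (your hardwired ``$+1$''), and those of $\widetilde{\BF{L}}^{(N)}_{k_N}$ are those restricted to outgoing carry $0$, which kills both wrap-around (so $j\le i$ gives $0$) and the unused shift $2^N$. Uniqueness of the carry chain then collapses the $q$-sum to the indicator of $j-i=(k_1,\ldots,k_N)$, giving $T_{i,j}=s_{j-i}$ for $j>i$ and $0$ otherwise, as required. One small presentational caveat: your statement that the single block column of $\widetilde{\BF{L}}^{(1)}$ ``imposes $c_0=1$'' sits slightly at odds with the stated identification $q=1\leftrightarrow c=0$ applied to $q_0=1$; it would be cleaner to say the two boundary cores are degenerate and encode the forced values $c_0=1$ and $c_N=0$ directly in their block entries rather than through the bond index. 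What your approach buys is an elementary, checkable proof (a finite-automaton interpretation of the TT cores) that the paper leaves entirely to the cited reference, and it transfers immediately to the Hankel and shift-matrix corollaries in Appendices \ref{sec:app2} and \ref{sec:app3}, where the matrices $\BF{P},\BF{Q},\BF{R}$ play the roles of $\BF{I},\BF{J},\BF{K}$ after reversing the row index.
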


The matrix TT representation for Toeplitz matrices can be 
extended to Hankel matrices as follows. 

\begin{cor}[An explicit matrix TT representation of 
Hankel matrix] 

Let 
	\begin{equation}\label{basic_matrix02}
	\BF{P} = \begin{bmatrix}
	0&1\\1&0 
	\end{bmatrix}, \quad
	\BF{Q} = \begin{bmatrix}
	1&0\\0&0 
	\end{bmatrix}, \quad
	\BF{R} = \begin{bmatrix}
	0&0\\0&1 
	\end{bmatrix}
	\end{equation}
be $2\times 2$ matrices and let
	\begin{equation}\label{block_matrixM01}
	\widetilde{\BF{M}}^{(N)}_1 = 
	\begin{bmatrix} \BF{P} & \BF{Q} \end{bmatrix}, 
	\quad
	\widetilde{\BF{M}}^{(N-1)}_1 = 
	\cdots = \widetilde{\BF{M}}^{(2)}_1 =
	\begin{bmatrix} \BF{P} & \BF{Q}\\ \BF{0}&\BF{R} \end{bmatrix}, 
	\quad
	\widetilde{\BF{M}}^{(1)}_1 = 
	\begin{bmatrix} \BF{Q}\\ \BF{R} \end{bmatrix},
	\end{equation}
	\begin{equation}
	\widetilde{\BF{M}}^{(N)}_2 = 
	\begin{bmatrix} \BF{Q} & \BF{0} \end{bmatrix}, 
	\quad
	\widetilde{\BF{M}}^{(N-1)}_2 = 
	\cdots = \widetilde{\BF{M}}^{(2)}_2 =
	\begin{bmatrix} \BF{Q} & \BF{0}\\ \BF{R}&\BF{P} \end{bmatrix}, 
	\quad
	\widetilde{\BF{M}}^{(1)}_2 = 
	\begin{bmatrix} \BF{0}\\ \BF{P} \end{bmatrix}
	\end{equation}
be block matrices. 
For each of the block matrices 
$\widetilde{\BF{M}}^{(n)}_{k_n},k_n=1,2,$
we denote the $(q_n,q_{n-1})$th block of 
$\widetilde{\BF{M}}^{(n)}_{k_n}$
by $\BF{M}^{(n)}_{q_n,k_n,q_{n-1}}\in\BB{R}^{2\times 2}$, that is, 
	\begin{equation}
	\widetilde{\BF{M}}^{(n)}_{k_n} = 
	\left[ \BF{M}^{(n)}_{q_n,k_n,q_{n-1}}  
	\right]_{q_n,q_{n-1}}, 
	\quad k_n=1,2.
	\end{equation}
Suppose that $\BF{s}=\left[s_1,s_2,\ldots,s_{2^N}\right]^\RM{T}$
is represented in TT format as 
	\begin{equation}
	s_{k_1,k_2,\ldots,k_N} =
	\sum_{r_1=1}^{R_1} \sum_{r_2=1}^{R_2}\cdots 
	\sum_{r_{N-1}=1}^{R_{N-1}}
	s^{(1)}_{1,k_1,r_1}s^{(2)}_{r_1,k_2,r_2}\cdots 
	s^{(N)}_{r_{N-1},k_N,1}. 
	\end{equation}
Then, the upper anti-triangular Hankel matrix \eqref{eq:hilb}
is expressed in matrix TT format as 
	\begin{equation}\label{eq:matrixTThankel}
	\BF{H} = 
	\sum_{t_1=1}^{2R_1} \sum_{t_2=1}^{2R_2}\cdots 
	\sum_{t_{N-1}=1}^{2R_{N-1}}
	\BF{H}^{(N)}_{1,t_{N-1}}\otimes 
	\BF{H}^{(N-1)}_{t_{N-1},t_{N-2}}\otimes
	\cdots \otimes \BF{H}^{(1)}_{t_1,1}, 
	\end{equation}
where $\BF{H}^{(n)}_{t_n,t_{n-1}}\in\BB{R}^{2\times 2}$ 
are defined by 
	\begin{equation}
	\BF{H}^{(n)}_{(r_n,q_n),(r_{n-1}q_{n-1})} = 
	\sum_{k_n=1}^{2} s^{(n)}_{r_{n-1},k_n,r_n}
	\BF{M}^{(n)}_{q_n,k_n,q_{n-1}}
	\end{equation}
with $t_n=(r_n,q_n)$ for $n=1,2,\ldots,N.$
\end{cor}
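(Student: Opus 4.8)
The plan is to avoid redoing the combinatorial construction and instead reduce the Hankel case directly to the already-proven Toeplitz theorem, using the fact that a Hankel matrix is a column-reversal of a Toeplitz matrix generated by the same vector. First I would compare the entries of \eqref{eq:toep} and \eqref{eq:hilb}: the upper triangular Toeplitz matrix has $T_{i,j}=s_{j-i}$ (zero on and below the diagonal), while the upper anti-triangular Hankel matrix has $H_{i,j}=s_{2^N-i-j+1}$ (zero when $i+j>2^N$). Substituting $j\mapsto 2^N+1-j$ gives $T_{i,2^N+1-j}=s_{(2^N+1-j)-i}=s_{2^N-i-j+1}=H_{i,j}$, so $\BF{H}=\BF{T}\BF{F}$, where $\BF{F}\in\BB{R}^{2^N\times 2^N}$ is the exchange (column-reversal) matrix with $F_{k,j}=1$ iff $k+j=2^N+1$. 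The shared generating vector is important here, and I would note that the extra entry $s_{2^N}$ present in the length-$2^N$ TT never enters the relevant triangular or anti-triangular parts.

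Next I would show that $\BF{F}$ has the rank-one matrix TT representation $\BF{F}=\BF{P}^{\otimes N}$ with $\BF{P}=\begin{bmatrix}0&1\\1&0\end{bmatrix}$ from \eqref{basic_matrix02}. Writing the column index through the multi-index \eqref{eqn:multi_index_paran}, the reversal $j\mapsto 2^N+1-j$ complements every binary digit $j_n\in\{1,2\}$, i.e. $j_n\mapsto 3-j_n$, and complementing a single digit is exactly the action of $\BF{P}$ on that mode. Since the same $\BF{P}$ acts in every mode, the Kronecker ordering convention in \eqref{eqn:matrix_TT_kron} is immaterial and $\BF{F}=\BF{P}\otimes\cdots\otimes\BF{P}$.

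Then I would apply the mixed-product property of the Kronecker product to the Toeplitz representation. From the preceding theorem, $\BF{T}=\sum_{t_1,\ldots,t_{N-1}}\BF{T}^{(N)}_{1,t_{N-1}}\otimes\cdots\otimes\BF{T}^{(1)}_{t_1,1}$; multiplying on the right by $\BF{F}=\BF{P}\otimes\cdots\otimes\BF{P}$ distributes over each Kronecker factor (the mode-$n$ factor $\BF{P}$ hits the column index $j_n$ of the $2\times 2$ core), giving $\BF{H}^{(n)}_{t_n,t_{n-1}}=\BF{T}^{(n)}_{t_n,t_{n-1}}\BF{P}$. It then remains to verify that right multiplication by $\BF{P}$ sends the Toeplitz building blocks to the Hankel ones: the direct $2\times 2$ computation gives $\BF{I}\BF{P}=\BF{P}$, $\BF{J}\BF{P}=\BF{Q}$, $\BF{K}\BF{P}=\BF{R}$, and $\BF{0}\BF{P}=\BF{0}$. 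Comparing \eqref{block_matrixL01} with \eqref{block_matrixM01}, this replacement turns each block of $\widetilde{\BF{L}}^{(n)}_{k_n}$ into the corresponding block of $\widetilde{\BF{M}}^{(n)}_{k_n}$, so that $\BF{M}^{(n)}_{q_n,k_n,q_{n-1}}=\BF{L}^{(n)}_{q_n,k_n,q_{n-1}}\BF{P}$, and the stated formula for $\BF{H}^{(n)}_{t_n,t_{n-1}}$ follows termwise.

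The main obstacle is the index bookkeeping in the first two steps: one must pin down both the entrywise identity $H_{i,j}=T_{i,2^N+1-j}$ and the factorization of the exchange matrix as $\BF{P}^{\otimes N}$ under the little-endian convention of \eqref{eqn:multi_index_paran}. Once $\BF{H}=\BF{T}\BF{P}^{\otimes N}$ is established, the rest is the routine mixed-product distribution together with the elementary check $\{\BF{I},\BF{J},\BF{K}\}\BF{P}=\{\BF{P},\BF{Q},\BF{R}\}$, which makes the transformation of \eqref{block_matrixL01} into \eqref{block_matrixM01} transparent and completes the proof.
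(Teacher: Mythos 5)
Your argument is correct, and it takes a genuinely different route from the paper's. The paper gives no written proof of this corollary; it presents it as an extension of the Toeplitz theorem of \cite{Kaz2013}, which implicitly means re-running that combinatorial core construction with anti-diagonal blocks in place of diagonal ones. You instead reduce the Hankel case to the already-stated Toeplitz case via the entrywise identity $H_{i,j}=T_{i,2^N+1-j}$, i.e., $\BF{H}=\BF{T}\BF{E}$ with $\BF{E}$ the $2^N\times 2^N$ exchange (column-reversal) matrix. The factorization $\BF{E}=\BF{P}\otimes\cdots\otimes\BF{P}$ is right, because the reversal $j\mapsto 2^N+1-j$ complements each digit $j_n\mapsto 3-j_n$ under the little-endian convention \eqref{eqn:multi_index_paran}, and the Kronecker mixed-product rule then pushes one copy of $\BF{P}$ onto the column index of every $2\times 2$ core of \eqref{eqn:matrix_TT_kron}. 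The elementary checks $\BF{I}\BF{P}=\BF{P}$, $\BF{J}\BF{P}=\BF{Q}$, $\BF{K}\BF{P}=\BF{R}$ do map every block of $\widetilde{\BF{L}}^{(n)}_{k_n}$ in \eqref{block_matrixL01} to the corresponding block of $\widetilde{\BF{M}}^{(n)}_{k_n}$ in \eqref{block_matrixM01}, so $\BF{H}^{(n)}_{t_n,t_{n-1}}=\BF{T}^{(n)}_{t_n,t_{n-1}}\BF{P}$ termwise and the stated formula follows. What your route buys is an explanation of where $\BF{P},\BF{Q},\BF{R}$ come from (they are $\BF{I},\BF{J},\BF{K}$ times the $2\times 2$ exchange), no new induction, and an automatic confirmation that the TT-ranks $2R_n$ are preserved since $\BF{E}$ has all matrix TT-ranks equal to one; what the paper's implicit route buys is independence from the Toeplitz statement and a template that carries over directly to the submatrix corollary that follows. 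One cosmetic caution: use a letter other than $\BF{F}$ for the exchange matrix, since $\BF{F}$ already denotes the shift matrix in Appendix \ref{sec:app3}.
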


The matrix TT representation of a submatrix of the 
Hankel matrix $\BF{H}$ can be derived from the representation 
of $\BF{H}$ \eqref{eq:matrixTThankel}. 

\begin{cor}
The $2^N\times 2^{N-1}$ submatrix $\BF{H}(:,1:2^{N-1})$ of 
the Hankel matrix $\BF{H}$ can be written by 
	\begin{equation}
	\BF{H}(:,1:2^{N-1}) = 
	\sum_{t_1=1}^{2R_1} \sum_{t_2=1}^{2R_2}\cdots 
	\sum_{t_{N-2}=1}^{2R_{N-2}}
	\left( \sum_{t_{N-1}=1}^{2R_{N-1}}
		\BF{H}^{(N)}_{1,t_{N-1}}(:,1)\otimes 
		\BF{H}^{(N-1)}_{t_{N-1},t_{N-2}}  \right) 
	\otimes \cdots \otimes 
	  \BF{H}^{(1)}_{t_1,1}, 
	\end{equation}
where $\BF{H}^{(N)}_{1,t_{N-1}}(:,1)\in\BB{R}^{2\times 1}$ is the first column vector 
of $ \BF{H}^{(N)}_{1,t_{N-1}}$. 
\end{cor}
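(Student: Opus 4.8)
The plan is to derive the submatrix representation directly from the matrix TT representation of the full Hankel matrix $\BF{H}$ established in the preceding corollary, by recognizing that extracting the first $2^{N-1}$ columns is a linear operation acting only on the leftmost Kronecker factor of each summand. Recall that the corollary gives
\[
\BF{H} = \sum_{t_1=1}^{2R_1}\cdots\sum_{t_{N-1}=1}^{2R_{N-1}} \BF{H}^{(N)}_{1,t_{N-1}} \otimes \BF{H}^{(N-1)}_{t_{N-1},t_{N-2}} \otimes \cdots \otimes \BF{H}^{(1)}_{t_1,1},
\]
where each $\BF{H}^{(n)}_{t_n,t_{n-1}} \in \BB{R}^{2\times 2}$. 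In every summand the leftmost factor $\BF{H}^{(N)}_{1,t_{N-1}}$ is $2\times 2$, while the remaining factors form a $2^{N-1}\times 2^{N-1}$ matrix.

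First I would pin down how the column index interacts with the Kronecker structure. By the multi-index convention \eqref{eqn:multi_index_paran}, the column index $(j_1,\ldots,j_N)$ has $j_N$ as its most significant digit, and in the reversed factor ordering of \eqref{eqn:matrix_TT_kron} this most significant digit is precisely the column index of the leftmost factor $\BF{H}^{(N)}_{1,t_{N-1}}$. Hence the columns $1,\ldots,2^{N-1}$ are exactly the columns with $j_N=1$. For a single Kronecker product $\BF{A}\otimes\BF{B}$ with $\BF{B}\in\BB{R}^{p\times q}$, the block of columns indexed by a fixed column $j_A$ of $\BF{A}$ equals $\BF{A}(:,j_A)\otimes\BF{B}$; applying this with $j_A=1$ and $q=2^{N-1}$ shows that restricting each summand to its first $2^{N-1}$ columns replaces $\BF{H}^{(N)}_{1,t_{N-1}}$ by its first column $\BF{H}^{(N)}_{1,t_{N-1}}(:,1)\in\BB{R}^{2\times 1}$.

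Next I would commute the column restriction with the finite sum, since column selection is linear, and apply the column-block identity termwise to obtain
\[
\BF{H}(:,1:2^{N-1}) = \sum_{t_1=1}^{2R_1}\cdots\sum_{t_{N-1}=1}^{2R_{N-1}} \BF{H}^{(N)}_{1,t_{N-1}}(:,1) \otimes \BF{H}^{(N-1)}_{t_{N-1},t_{N-2}} \otimes \cdots \otimes \BF{H}^{(1)}_{t_1,1}.
\]
Finally, because the index $t_{N-1}$ appears only in the first two factors and the Kronecker product is bilinear and associative, I would pull the sum over $t_{N-1}$ inside and group it with those two factors, yielding the claimed expression with the bracketed inner sum $\sum_{t_{N-1}} \BF{H}^{(N)}_{1,t_{N-1}}(:,1)\otimes \BF{H}^{(N-1)}_{t_{N-1},t_{N-2}}$.

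The computation is essentially routine once the conventions are fixed; the only real care is in the index bookkeeping. The main subtlety — and the step most prone to error — is correctly matching the most significant column digit $j_N$ to the leftmost Kronecker factor under the reversed factor ordering of \eqref{eqn:matrix_TT_kron}, and verifying that fixing $j_N=1$ selects the leading $2^{N-1}$ columns rather than some interleaved set. A quick dimension count of the grouped term, $\left(2\cdot 2^{N-2}\right)\times\left(2\cdot 2^{N-2}\right)=2^N\times 2^{N-1}$, provides a useful consistency check.
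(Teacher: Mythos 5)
Your argument is correct and is exactly the derivation the paper intends (the paper states this corollary without proof, noting only that it follows from the Kronecker representation \eqref{eq:matrixTThankel}): fixing the most significant column digit $j_N=1$ selects precisely the leading $2^{N-1}$ columns, which replaces the leftmost factor by its first column, and the sum over $t_{N-1}$ is then regrouped by bilinearity. The only blemish is the final dimension count, where the row factor of the assembled product should read $4\cdot 2^{N-2}=2^{N}$ rather than $2\cdot 2^{N-2}$; this is a slip in a parenthetical sanity check and does not affect the proof.
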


In the same way, we can derive the matrix TT representation of a
top-left corner submatrix of the Hankel matrix.

\section{Explicit Tensor Train Representation of Tridiagonal Matrix}
\label{sec:app3}

A shift matrix $\BF{F}\in\BB{R}^{P\times P}$ is a banded 
binary matrix whose nonzero entries are on the first diagonal above 
the main diagonal. The $(i,j)$th entry of $\BF{F}$ is 
	$f_{ij} = 1$ if $i+1=j,$ and $f_{ij}=0$ otherwise. 
The following lemma describes a TT representation 
for the shift matrix. 

\begin{lem}[An explicit matrix TT representation of 
the shift matrix, \cite{Kaz2013}]
Let $\widetilde{\BF{L}}^{(n)}_1$
and $\widetilde{\BF{M}}^{(n)}_1$, $n=1,2,\ldots,N$, are the 
block matrices defined by \eqref{block_matrixL01} and \eqref{block_matrixM01}. 
The shift matrix $\BF{F}\in\BB{R}^{2^N\times 2^N}$ is 
represented in TT format by 
	\begin{equation}
	\BF{F} = \sum_{q_1=1}^{2} \sum_{q_2=1}^{2}\cdots 
	\sum_{q_{N-1}=1}^{2}
	\BF{L}^{(N)}_{1,1,q_{N-1}}
	\otimes \BF{L}^{(N-1)}_{q_{N-1},1,q_{N-2}} \otimes
	\cdots \otimes \BF{L}^{(1)}_{q_1,1,1}. 
	\end{equation}
The transpose of the shift matrix $\BF{F}$ is 
represented in TT format by 
	\begin{equation}
	\BF{F}^\RM{T} = \sum_{q_1=1}^{2} \sum_{q_2=1}^{2}\cdots 
	\sum_{q_{N-1}=1}^{2}
	(\BF{L}^{(N)}_{1,1,q_{N-1}})^\RM{T}
	\otimes (\BF{L}^{(N-1)}_{q_{N-1},1,q_{N-2}})^\RM{T} \otimes
	\cdots \otimes (\BF{L}^{(1)}_{q_1,1,1})^\RM{T}. 
	\end{equation}
\end{lem}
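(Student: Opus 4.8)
The plan is to recognize the shift matrix $\BF{F}$ as a particular instance of the upper triangular Toeplitz matrix \eqref{eq:toep} and then specialize the Toeplitz theorem proved above. Since $f_{ij}=1$ exactly when $j=i+1$, the matrix $\BF{F}$ is precisely the upper triangular Toeplitz matrix generated by $\BF{s}=[1,0,0,\ldots,0]^\RM{T}$, i.e. with $s_1=1$ and $s_m=0$ for all $m\geq 2$. Thus the entire content of the lemma is that the Toeplitz construction, when fed this trivial generating vector, collapses to the stated Kronecker sum built from the blocks of $\widetilde{\BF{L}}^{(n)}_1$ defined in \eqref{block_matrixL01}.

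First I would write $\BF{s}=e_1$ in rank-one TT format. Taking all TT-ranks $R_1=\cdots=R_{N-1}=1$, I set the cores to $s^{(n)}_{1,k_n,1}=\delta_{k_n,1}$, so that $s_{k_1,\ldots,k_N}=\prod_{n=1}^{N}\delta_{k_n,1}$ is supported only at the multi-index $(1,1,\ldots,1)=1$, reproducing $\BF{s}=e_1$.

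Next I would substitute this TT representation into the Toeplitz theorem. With every $R_n=1$ the compound index $t_n=(r_n,q_n)$ has $r_n\equiv 1$, so each sum over $t_n$ reduces to a sum over $q_n\in\{1,2\}$, and the boundary conditions $r_0=r_N=1$ hold automatically. The core formula simplifies because only $k_n=1$ survives:
\begin{equation}
\BF{T}^{(n)}_{(1,q_n),(1,q_{n-1})}=\sum_{k_n=1}^{2}\delta_{k_n,1}\,\BF{L}^{(n)}_{q_n,k_n,q_{n-1}}=\BF{L}^{(n)}_{q_n,1,q_{n-1}}.
\end{equation}
Plugging these collapsed cores into the Kronecker-sum representation of $\BF{T}$ from the Toeplitz theorem then yields exactly the claimed expression for $\BF{F}$.

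Finally, the formula for $\BF{F}^\RM{T}$ follows at once by transposing term by term, using that the transpose distributes across a Kronecker product while preserving the order of factors, $(\BF{A}_1\otimes\cdots\otimes\BF{A}_m)^\RM{T}=\BF{A}_1^\RM{T}\otimes\cdots\otimes\BF{A}_m^\RM{T}$; this replaces each block $\BF{L}^{(n)}_{q_n,1,q_{n-1}}$ by its transpose and leaves the summation structure untouched. The only point requiring care — and the one I would check explicitly — is the bookkeeping of the index conventions: confirming that the block-row and block-column shapes of the boundary factors $\widetilde{\BF{L}}^{(N)}_1$ and $\widetilde{\BF{L}}^{(1)}_1$ in \eqref{block_matrixL01} pin $q_N$ and $q_0$ to the value $1$ (so that $t_N=t_0=1$), making the outermost and innermost factors $\BF{L}^{(N)}_{1,1,q_{N-1}}$ and $\BF{L}^{(1)}_{q_1,1,1}$ as written. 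I note that the auxiliary Hankel blocks $\widetilde{\BF{M}}^{(n)}_1$ mentioned in the hypothesis play no role in this argument and appear only because the source records the Hankel shift alongside; they may be ignored here.
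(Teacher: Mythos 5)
Your proof is correct. The paper itself offers no proof of this lemma --- it is stated as a cited result from \cite{Kaz2013} --- so there is nothing to compare against, but your derivation is the natural one within the paper's own framework: the shift matrix is the upper triangular Toeplitz matrix \eqref{eq:toep} generated by $s_1=1$, $s_m=0$ for $m\geq 2$, the vector $\mathbf{e}_1$ admits the rank-one TT representation $s^{(n)}_{1,k_n,1}=\delta_{k_n,1}$ (since the multi-index $(1,1,\ldots,1)$ maps to linear index $1$), and substituting into the Toeplitz theorem collapses each core to $\BF{L}^{(n)}_{q_n,1,q_{n-1}}$ exactly as claimed, with the transpose formula following from $(\BF{A}_1\otimes\cdots\otimes\BF{A}_m)^\RM{T}=\BF{A}_1^\RM{T}\otimes\cdots\otimes\BF{A}_m^\RM{T}$. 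Your closing observations --- that the boundary block shapes of $\widetilde{\BF{L}}^{(N)}_1$ and $\widetilde{\BF{L}}^{(1)}_1$ force $q_N=q_0=1$, and that the $\widetilde{\BF{M}}^{(n)}_1$ in the hypothesis are inert here --- are both accurate.
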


A tridiagonal matrix $\BF{A}\in\BB{R}^{2^N\times 2^N}$ 
generated by three vectors $\BF{a,b,c}\in\BB{R}^{2^N}$
is written by 
	\begin{equation}
	\BF{A} = \begin{bmatrix}
		b_1 & c_2 & 0 & & \\
	  a_1 & b_2 & c_3 & 0 &  \\
	   & \ddots & \ddots  & \ddots & \\
	  & 0 & a_{2^{N}-2} & b_{2^{N}-1} & c_{2^N} \\
	  & & 0 & a_{2^{N}-1} & b_{2^N} 
	\end{bmatrix}. 
	\end{equation}
Suppose that the vectors $\BF{a,b,c}$ are given in 
vector TT format. Then, by using the basic operations \cite{Ose2011}, 
we can compute the tridiagonal matrix $\BF{A}$
by 	
	\begin{equation}
	\BF{A} = \BF{F}^\RM{T} \text{diag}(\BF{a})
	+ 
	\text{diag}(\BF{b})
	+
	\BF{F}\text{diag}(\BF{c}). 
	\end{equation}

\end{document}